\newtheorem{thm}{Theorem}[section]
\newtheorem{lem}[thm]{Lemma}
\newtheorem{cor}[thm]{Corollary}
\newtheorem{rem}[thm]{Remark}
\newenvironment{proof}{\noindent {\bf Proof \phantom{9}}}
{\hfill $\square$ \vspace{0.25cm}}
\def\be{\begin{eqnarray}}
\def\ee{\end{eqnarray}}
\def\ben{\begin{eqnarray*}}
\def\een{\end{eqnarray*}}
\numberwithin{equation}{section}
\numberwithin{figure}{section}
\def\be{\begin{eqnarray}}
\def\ee{\end{eqnarray}}
\newcommand{\RR}{\mathbb{R}}
\newcommand{\PP}{\mathbb{P}}
\def\me{\medskip\noindent}
\def\N{\mathbb{N}}
\def\P{\mathbb{P}}
\def\R{\mathbb{R}}
\def\E{\mathbb{E}}
\def\ind{{\mathchoice {\rm 1\mskip-4mu l} {\rm 1\mskip-4mu l}
{\rm 1\mskip-4.5mu l} {\rm 1\mskip-5mu l}}}
\def\11{\mathbbm{1}}
\title{Stochastic analysis of emergence of evolutionary cyclic behavior in population dynamics with transfer}
\author{Nicolas Champagnat\thanks{Universit\'e de Lorraine, CNRS, Inria, IECL, F-54000 Nancy, France; E-mail: \texttt{nicolas.champagnat@inria.fr}},  Sylvie
    M\'el\'eard\thanks{Ecole Polytechnique, CNRS, IUF, CMAP, route de
    Saclay, 91128 Palaiseau Cedex-France; E-mail: \texttt{sylvie.meleard@polytechnique.edu}}, Viet Chi Tran\thanks{LAMA, Univ Gustave Eiffel, UPEM, Univ Paris Est Creteil, CNRS, F-77447, Marne-la-Vallée, France; E-mail:
    \texttt{chi.tran@u-pem.fr}}}
\date{\today}
\begin{document}

\maketitle

\begin{abstract}
Horizontal gene transfer consists in exchanging genetic materials between microorganisms during their lives. This is a major mechanism of bacterial evolution and is believed to be of main importance in antibiotics resistance. We consider a stochastic model for the evolution of a discrete population structured by a trait taking finitely many values, with density-dependent competition. Traits are vertically inherited unless a mutation occurs, and can also be horizontally transferred by unilateral conjugation with frequency dependent rate. Our goal is to analyze the trade-off between natural evolution to higher birth rates on one side, and transfer which drives the population towards lower birth rates on the other side. Simulations show that evolutionary outcomes include evolutionary suicide or cyclic re-emergence of small populations with well-adapted traits. We focus on a parameter scaling where individual mutations are rare but the global mutation rate tends to infinity. This implies that negligible sub-populations may have a strong contribution to evolution. Our main result quantifies the asymptotic dynamics of subpopulation sizes on a logarithmic scale. We characterize the possible evolutionary outcomes with explicit criteria on the model parameters. An important ingredient for the proofs lies in comparisons of the stochastic population process with linear or logistic birth-death processes with immigration. For the latter processes, we derive several results of independent interest.\end{abstract}

\me Keywords: horizontal gene transfer, bacterial conjugation, stochastic individual-based models, long time behavior, large population approximation, coupling, branching processes with immigration, logistic competition.

\bigskip
\noindent \emph{MSC 2000 subject classification:} 92D25, 92D15, 60J80, 60K35, 60F99.
\bigskip


\section{Introduction and presentation of the model}
\label{sec:introduction}

Bacterial evolution understanding is fundamental in biology, medicine and industry. The ability of a bacterium to survive and reproduce depends on its genes, and evolution mainly results from the
following basic mechanisms: heredity (also called vertical transmission);
mutation which generates variability of the traits; selection which results from the
interactions between individuals and their environment; exchange of genetic information between non-parental individuals during their
lifetimes (also called horizontal gene transfer (HGT)), see for example \cite{Ochman2000}, \cite{Keeling2008}. In many biological situations, these mechanisms drive the population to different evolutionary outcomes: directional evolution, re-emergence of apparently extinct traits or extinction of the population. For example, in antibiotic resistance, cyclic re-emergence of resistant strains are observed, while evolutionary suicide may correspond to successful antibiotic treatments~\cite{billiardcolletferrieremeleardtran2}. Such cyclic evolutionary dynamics and evolutionary suicide are due to an intricate interplay between HGT and selection and are therefore of a different nature from evolutionary behaviors observed in dynamical systems as prey-predator systems~\cite{dercoleferrieregragnanirinaldi,gyllenbergparvinen}. Usually, genes responsible for pathogens or antibiotic resistances  are carried by small DNA chains called \emph{plasmids} that can be exchanged between bacteria by HGT.
 Plasmids HGT also play a key role in other biological contexts, including transmission of an epidemic, epigenetics or bacterial degradation of novel compounds such as human-created
pesticides (e.g. \cite{Stewart1979}, \cite{Ginty2013}, \cite{Getino2015}).

In
\cite{billiardcolletferrieremeleardtran} and \cite{billiardcolletferrieremeleardtran2}, the authors introduced an
individual-based stochastic process for the trade-off between competition, transfer and advantageous mutations from which they derived some macroscopic approximations. They proved that the whole population can be driven (by transfer) to evolutionary suicide, under the assumption that mutations are very rare. However, simulations show much richer evolutionary behaviors when mutation events are more frequent.

We propose below a toy model to capture these phenomena. Up to our knowledge, this is the first evolutionary model involving specific mutation scales allowing to recover all these phenomena. The simulations of this model are shown in Figure~\ref{fig:simu}. We observe, depending on the
transfer rate, either {dominance} of the {trait with higher
  birth rate},  or a cyclic phenomenon, or evolutionary suicide.
In this model, the biological assumptions are as follows. We consider a large population of bacteria, characterized by a phenotypic value quantifying the macroscopic effect of plasmids on the demographic parameters. For example, the phenotype may describe the pathogenic strength or the antibiotic resistance~\cite{billiardcolletferrieremeleardtran2}.
When a transfer happens, a recipient bacterium is chosen uniformly at random in the population (frequency-dependence), as observed by biologists for large populations~\cite{raul}.
During the transfer, the quantity of plasmids in the recipient bacterium increases. The recipient bacterium receives the donor trait (this is called \emph{conjugation} in the biological
setting~\cite{billiardcolletferrieremeleardtran}). A large quantity of plasmids induces a large reproductive cost which slows down cell division~\cite{baltrus}, so that the division rate is a decreasing function of the trait.
The density-dependence in the competition death rate is uniform over the trait space.

\begin{figure}[!ht]
\begin{center}
\hspace{-1.5cm}
\begin{tabular}{cccc}
\includegraphics[width=4cm,height=4cm]{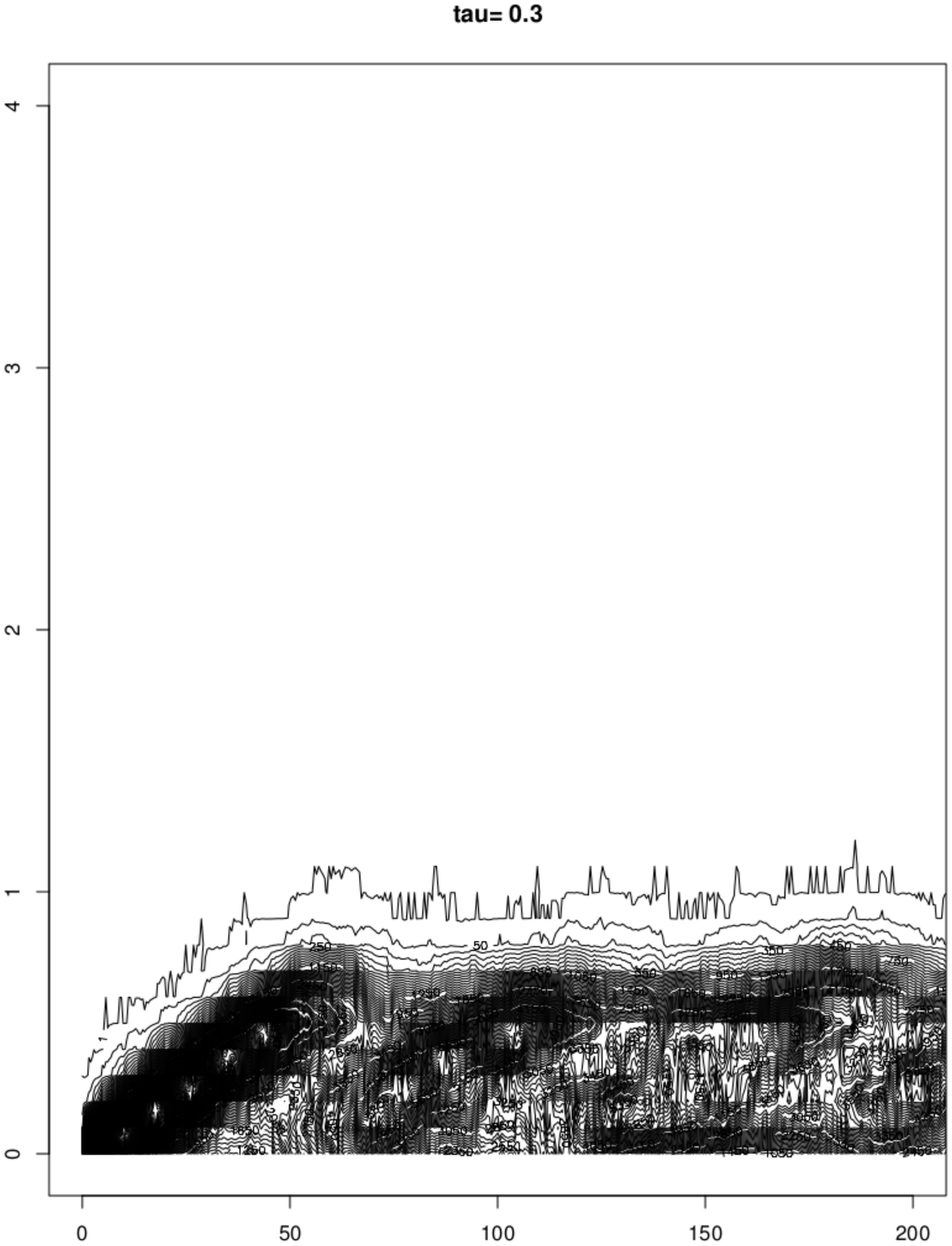} &
\includegraphics[width=4cm,height=4cm]{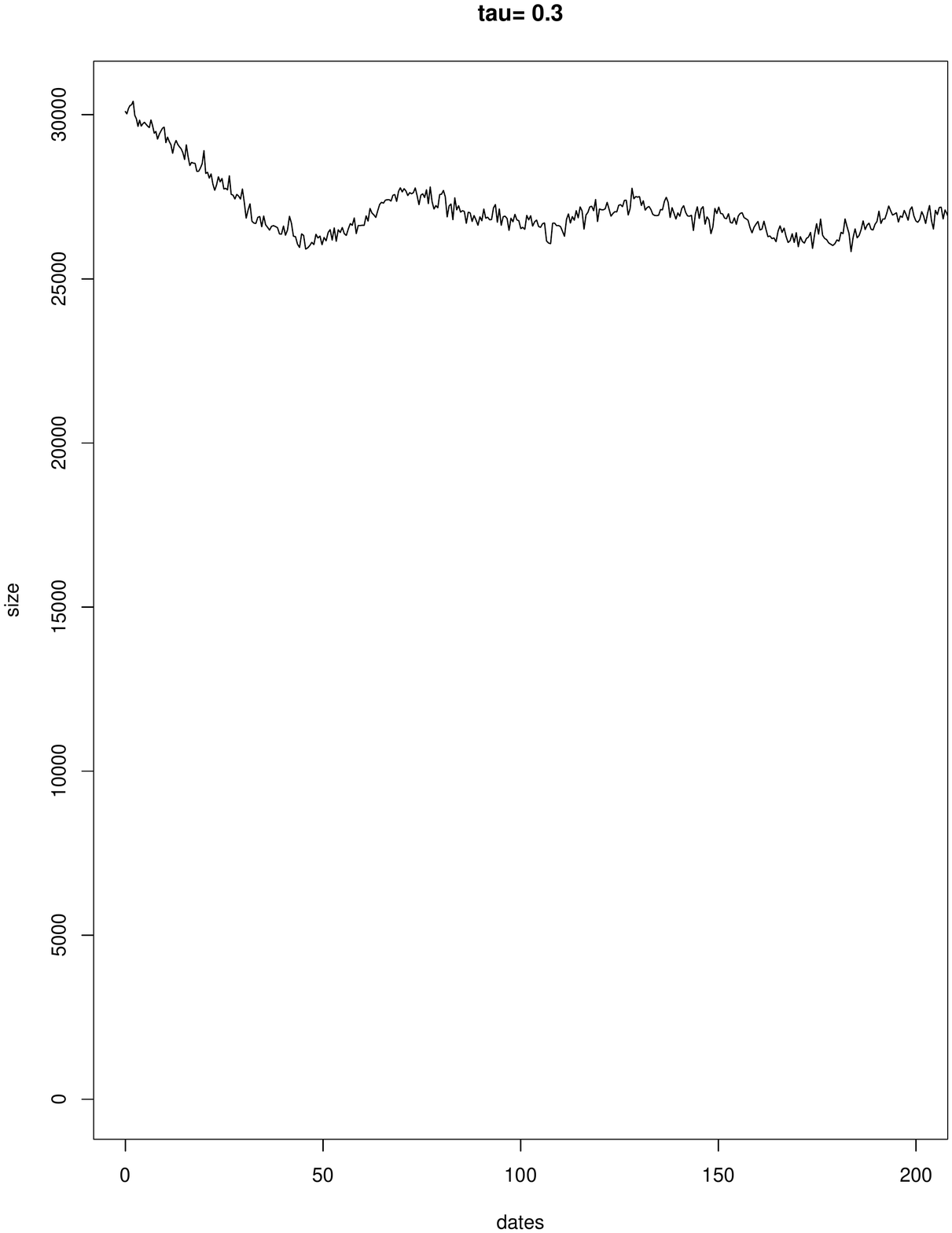} & \includegraphics[width=4cm,height=4cm]{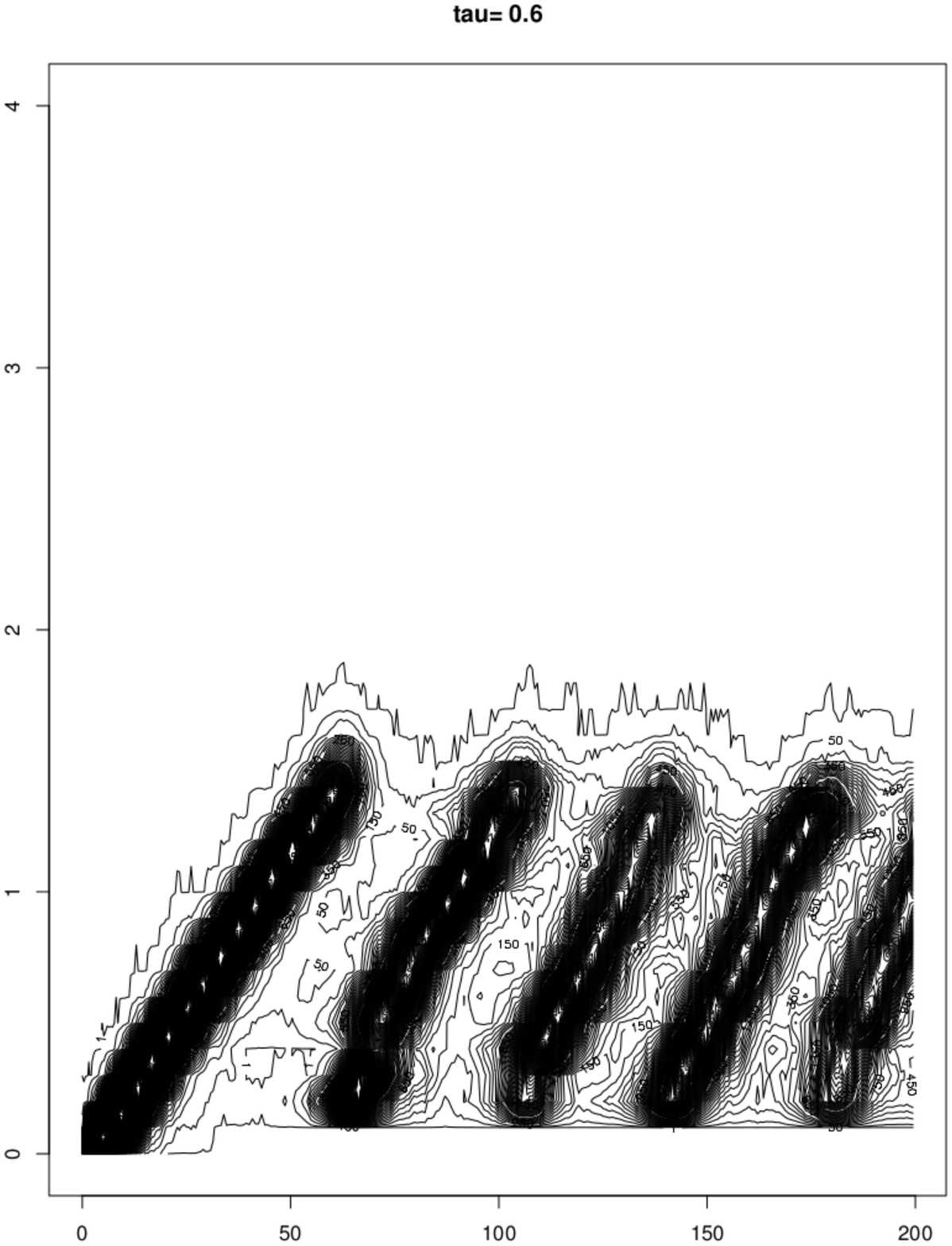} &
\includegraphics[width=4cm,height=4cm]{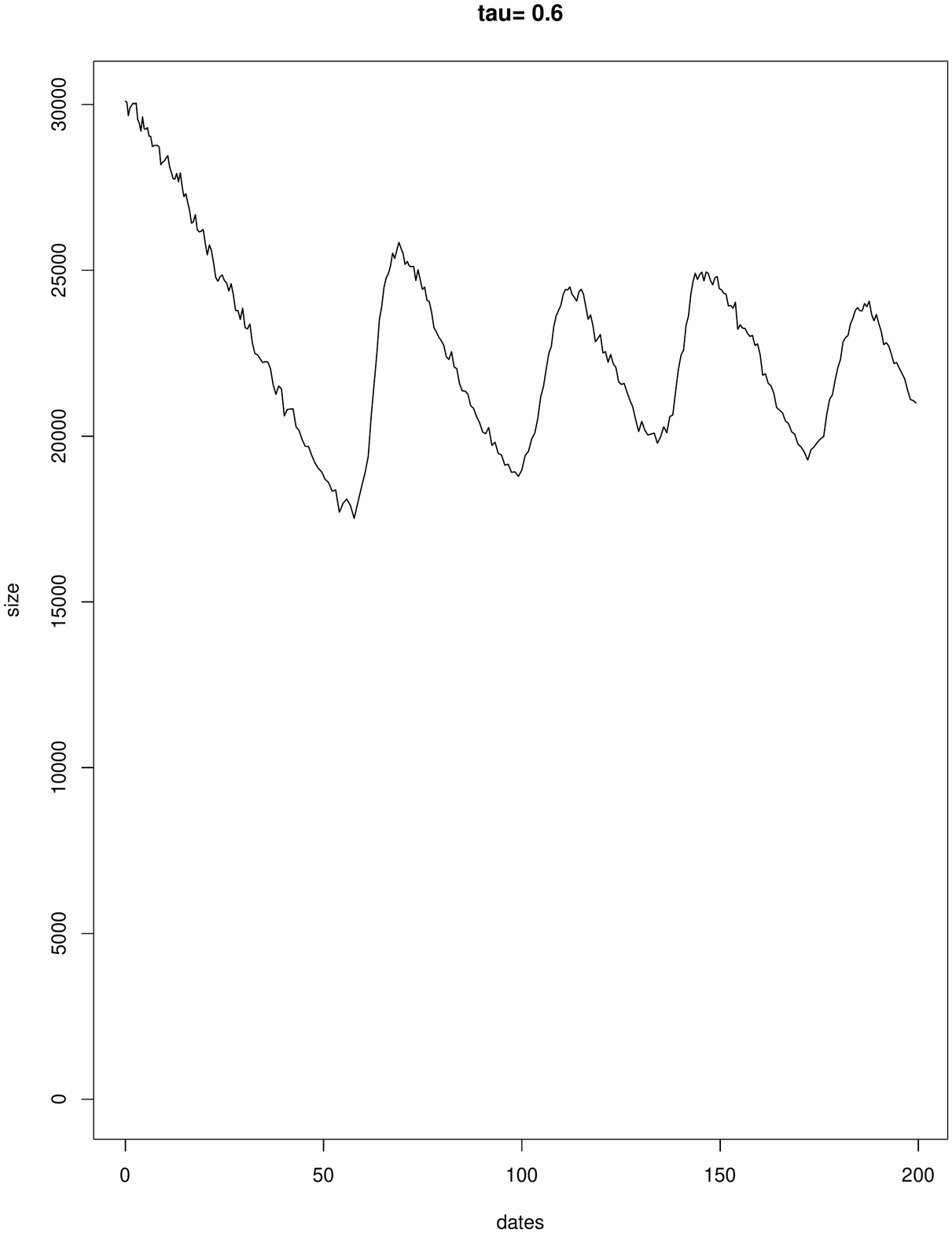}  \\
(a) & (b) & (c) & (d)\\
\includegraphics[width=4cm,height=4cm]{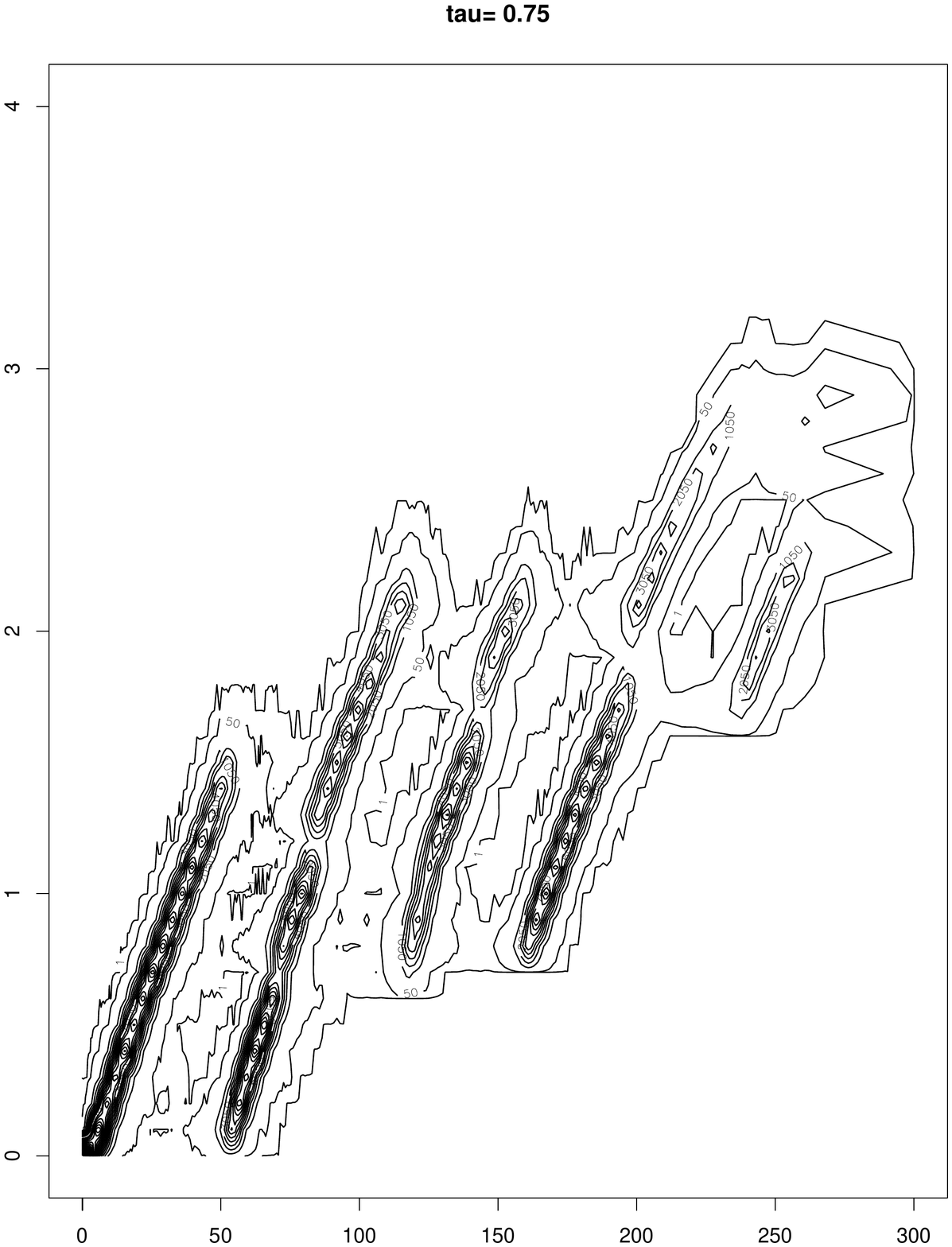} &
\includegraphics[width=4cm,height=4cm]{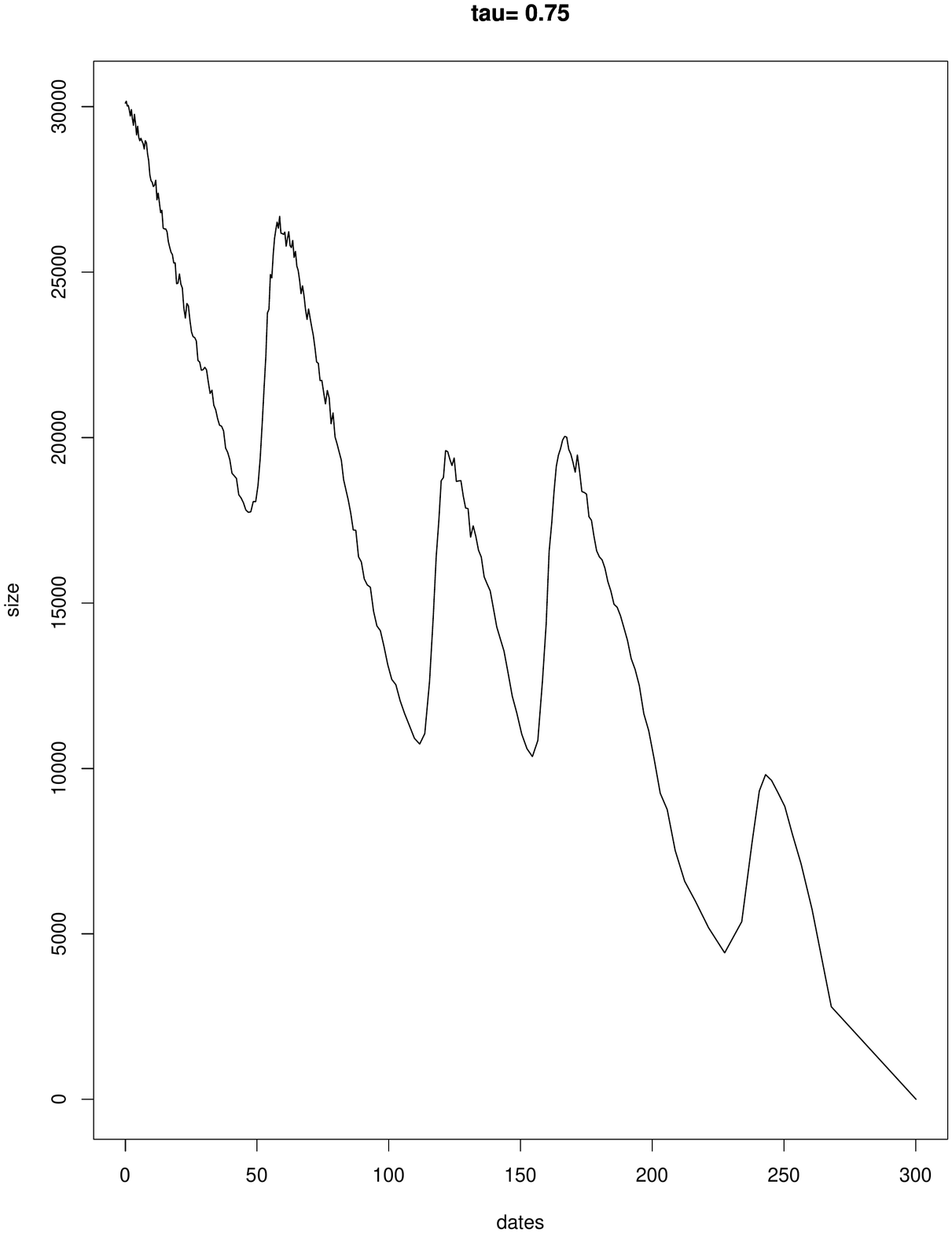} &
\includegraphics[width=4cm,height=4cm]{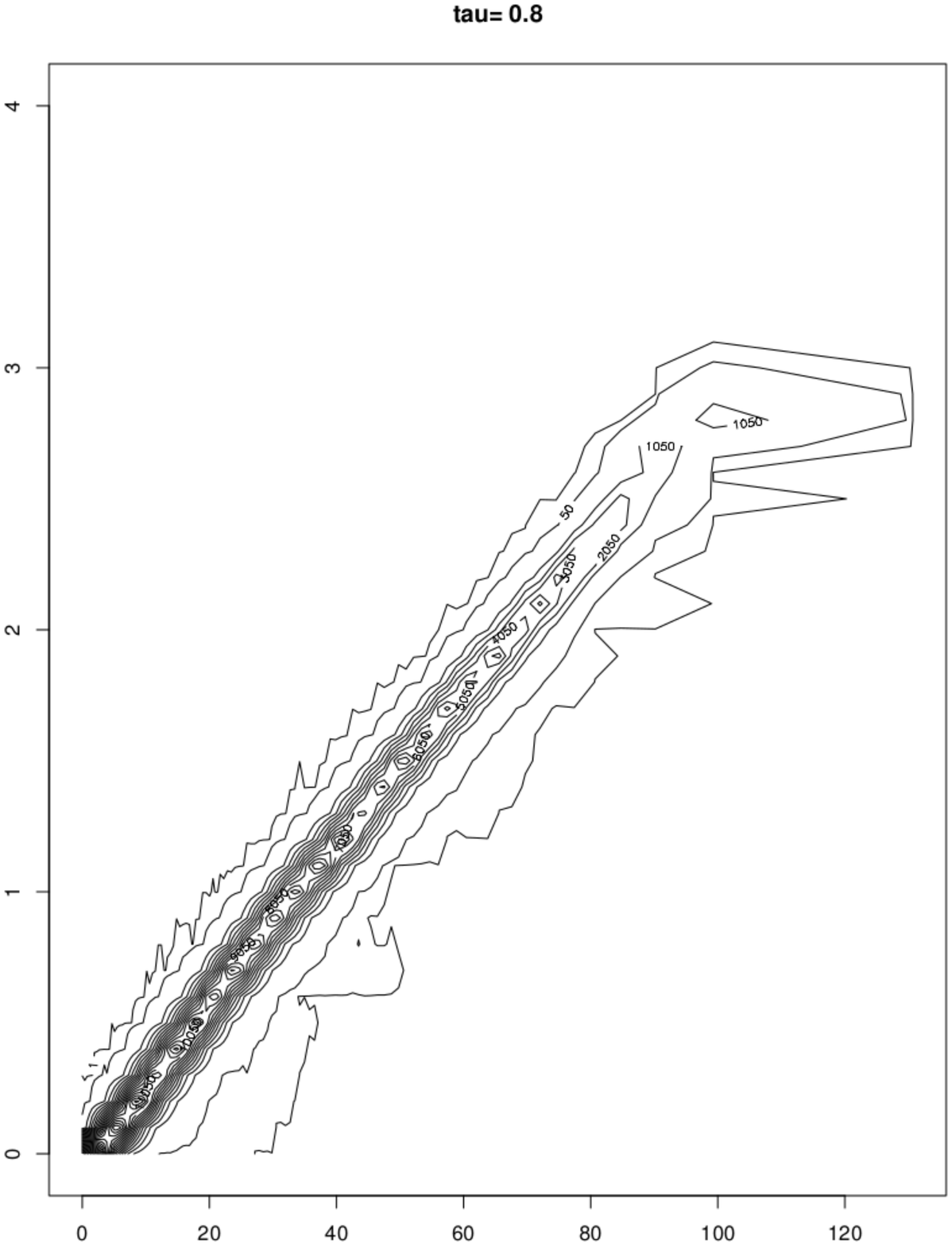} &
\includegraphics[width=4cm,height=4cm]{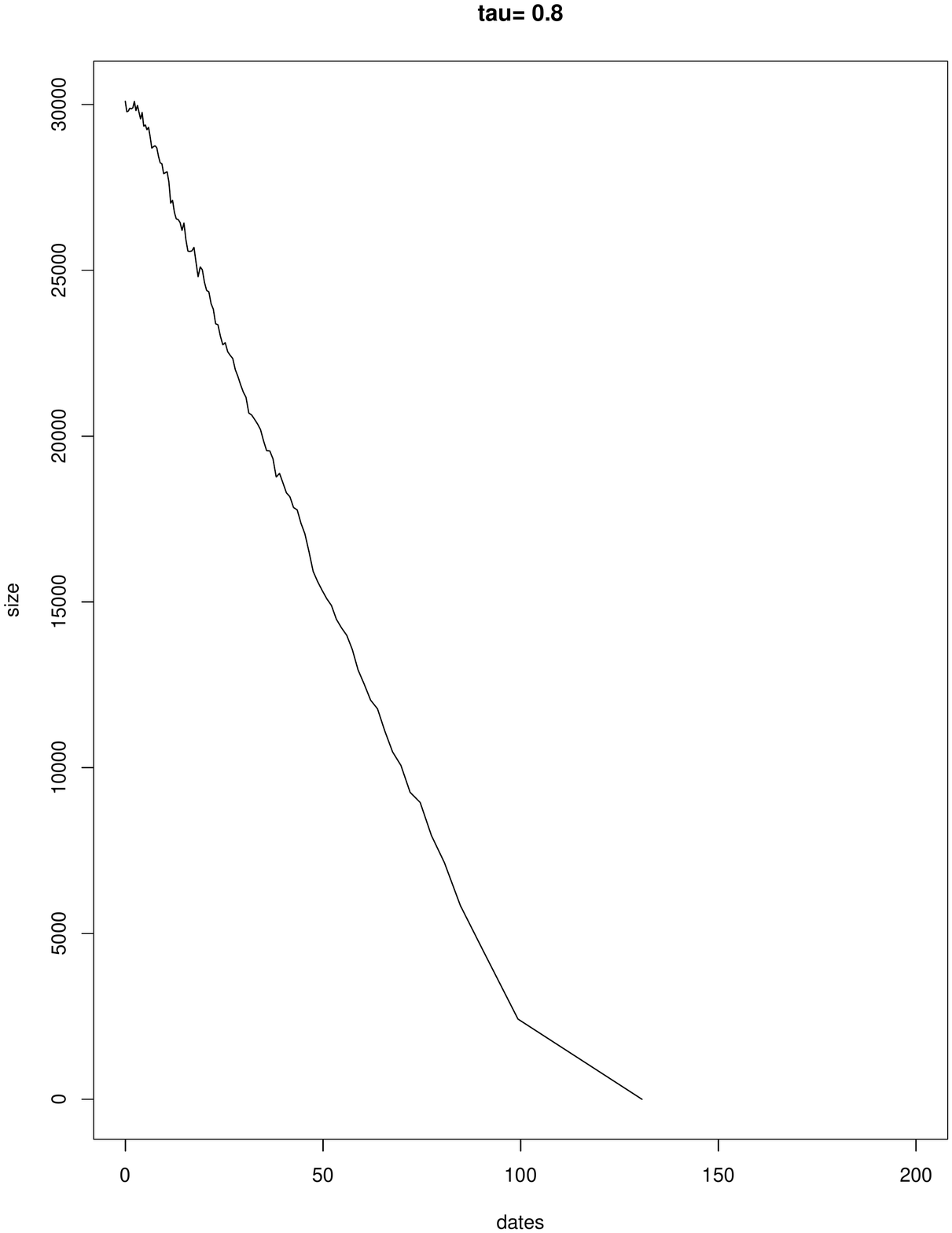} \\
(e) & (f) & (g) & (h)
\end{tabular}
\caption{{\small \textit{Simulations of eco-evolutionary dynamics with unilateral frequency dependent trait transfer. The evolution
      of the trait distribution is pictured on the left columns, the evolution of the population sizes $(N_0^K(t),\ldots,N^K_L(t))$ is
      on the right columns. In all the simulations, $K=10 000$ and $\delta=0.1$ and $\alpha=0.5$. (a)-(b): $\tau=0.3$. Smaller
      transfer rates are considered on longer time windows. (c)-(d): $\tau=0.6$. We see cyclic re-emergences of the fittest traits.
      (e)-(f): $\tau=0.75$. Re-emergence still occurs, but the higher transfer rate drives the trait distribution towards higher and
      less fit trait values. On stochastic simulations, this can lead to extinction or (temporary) cyclic behaviour. (g)-(h):
      $\tau=0.8$. An evolutionary suicide takes place. }}}\label{fig:simu}
\end{center}
\end{figure}

\bigskip

We consider a stochastic discrete population of individuals characterized by some trait. The population evolves in continuous time
through births (with or without mutation), deaths and transfer of traits between pairs of individuals. More precisely, we study a
continuous time Markov pure jump process, scaled by some integer parameter $K$, often called carrying capacity. The
initial population size is of the order of magnitude of $K$ for large $K$. The trait space is the grid of mesh $\delta>0$ of $[0,4]$:
$\mathcal{X}=[0,4]\cap \delta \N=\{0,\delta,\dots, L \delta\}$ where $L=\lfloor 4/\delta\rfloor$. The population is described by the vector
\[
\big(N_0^K(t),\dots, N_\ell^K(t),\dots, N_{L}^K(t)\big)
\]
where $N_\ell^K(t)$ is the number of individuals of trait $x=\ell\delta$ at time $t$. We define the total population size $N^K_t$ as
\[
N^K_t=\sum_{\ell=0}^L N^K_\ell(t).
\]

\me Let us now describe the dynamics of the population process.
\begin{itemize}
\item An individual with trait $x=\ell \delta$ in the population gives birth to another individual with rate $b(x)=4-x$. With
  probability \be
  \label{mutation-prob} p_K=K^{-\alpha} \ \hbox{ with } \ \alpha\in (0,1)\ ,
  \ee
  a mutation occurs and the new offspring carries the  mutant trait $(\ell+1)\delta$.\\
  With probability $1-p_K=1-K^{-\alpha}$, the new individual inherits the ancestral trait.\\
  The birth rate favors small values of $x$ with optimum at $x=0$.
\item An individual with trait $x$ transfers its trait to a given individual of trait $y$ in a population of total size $N$ at rate
 \[
 \tau(x,y,N)=\frac{\tau}{N}\, \ind_{x>y},
 \]
 for some parameter $\tau>0$.
\item The individuals compete to survive (to share resources or territories). An individual with trait $x=\ell\delta$ in the
  population of total size $N$ dies with natural death rate $d_K(N)=1+CN/K$.
\end{itemize}

Note that because of the factor $1/K$,
competition is governed only by traits with population size of order $K$. Therefore, density-dependence disappears when the total
population size is negligible with respect to $K$.

\me Let us note that under scaling \eqref{mutation-prob}, the total mutation rate in a population with size of order $K$, is equal
to $K^{1-\alpha}$ and then goes to infinity with $K$. We are very far from the situation described in many papers as \cite{champagnat06,champagnatmeleard2011,billiardcolletferrieremeleardtran2} where the authors explore the assumptions of the  adaptive dynamics theory. In that cases,  the total mutation rate  $K p_{K}$ is assumed to satisfy
$\log K \ll 1/(Kp_{K}) \ll e^{CK}$, leading to a time scale separation between demographic and mutational events.
Here, small populations of size order $K^\beta, \beta<1$ can have a non negligible contribution to evolution by mutational events and
we need to take into account all subpopulations with size of order $K^\beta$.

\medskip
We need to consider in the sequel two different situations: either there is a single trait $x$ with population size of order
$K$, called \emph{resident} trait, or the total population size is $o(K)$. In this last case, a trait with the
largest population size is called \emph{dominant} trait.

When the trait $x$ is the unique resident trait, it is well
known~\cite{ethierkurtz} that, when $K$ tends to infinity, the total population size can be approximated by $K\,n(t)$ where $n(.)$ solves the ODE
\[
\dot n(t) = n(t)(3-x -C n(t)),
\]
whose unique positive stable equilibrium is given by
\be
\label{equilibrium}
\overline{n}(x)=\frac{3-x}{C}.
\ee
We define the invasion fitness of a mutant individual of trait $y$ in the population of trait $x$ and size $K\bar{n}(x)$ as its initial growth rate given by
\be
\label{eq:general-fitness}
S(y;x)=b(y)-d_K(K\overline{n}(x))+\tau \ind_{x<y}-\tau \ind_{x>y}=x-y+\tau\,\text{sign}(y-x),
\ee
where $\text{sign}(x)=1$ if $x>0$; $0$ if $x=0$; $-1$ if $x<0$.
Indeed, the total transfer rate from $x$ to $y$ is given by $K \overline{n}(x)\,\frac{\tau}{(K \overline{n}(x)+1)} \ind_{y>x} \sim
\tau\, \ind_{y>x} $ when $K\rightarrow+\infty$ and similarly from $y$ to $x$. Note that $S(x;x)=0$ and, for all traits $x, y$,
$S(y;x)= - S(x;y)$  (see Figure~\ref{fig:fitness}). This implies in particular that there is no long-term coexistence of two resident traits.
We also define the fitness of an individual of trait $y$ in a negligible population (of size $o(K)$) with dominant trait $x$ to be
\begin{equation}\label{eq:fitness-chap}
\widehat{S}(y;x)=3-y+\tau \ind_{x<y}-\tau \ind_{x>y}.
\end{equation}
Indeed, it corresponds to $\lim_{K\to\infty} b(y)-d_K(o(K))$.

\medskip
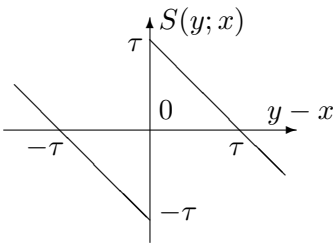
\begin{figure}[ht]
\begin{center}
    \unitlength=0.3cm
    \begin{picture}(13,10.5)
      \put(-1.5,5){\vector(1,0){13}} \put(10.2,5.5){$y-x$}
      \put(5,0){\vector(0,1){10}} \put(5.4,9.5){$S(y;x)$}
      \put(5.4,5.5){0}
      \put(4.0,8.5){$\tau$}
      \put(5.4,1){$-\tau$}
      \put(-0.5,3.9){$-\tau$}
      \put(8.5,3.9){$\tau$}
      \put(-1,7){\line(1,-1){6}}
      \put(5,9){\line(1,-1){6}}
    \end{picture}
 \caption{{\small \textit{Fitness function $S(y;x)$. In the absence of transfer ($\tau=0$), evolution favors traits $y$ smaller than $x$ (their fitness is positive). The introduction of a positive transfer reverts this evolutive trend: $S(y;x)>0$ if
$x<y<x+\tau$. Note that $S(y;x)>0$ also for $y<x-\tau$, which explains possible re-emergence of sufficiently smaller traits.}}}\label{fig:fitness}
\end{center}
\end{figure}

\me

Our study of the evolutionary dynamics of the model is based on a fine analysis of the size order, as power of $K$, of each
subpopulation corresponding to the different trait compartments. These powers of $K$ evolve on the timescale $\log K$, as can be
easily seen in the case of branching processes (see Lemma~\ref{lem:BP}). We thus define
$\beta^K_\ell(t)$ for $0\leq\ell\leq L$ such that
\begin{equation}
\label{def:beta}
N^K_{\ell}(t\log K)= K^{\beta^K_\ell(t)}-1,
\quad \mbox{ i.e. }\quad\beta^K_\ell(t)=\frac{\log(1+N^K_{\ell}(t\log K))}{\log K}.
\end{equation}

We assume that the trait $x=0$ is initially resident, with density $3/C$. A natural initial condition would hence be $N^K(0)=(\lfloor\frac{3K}{C}\rfloor,0,\ldots,0)$. However, on the time scale $\log K$, mutants are immediately created and therefore, we modify the initial condition as
\begin{equation}\label{eq:init}
  N^K(0)=\big(\lfloor\frac{3K}{C}\rfloor,\lfloor K^{1-\alpha}\rfloor,\dots,\lfloor K^{1-\ell\alpha}\rfloor,\dots,\lfloor K^{1-\lfloor
    1/\alpha\rfloor \alpha}\rfloor,  0,\dots, 0\big).
\end{equation}
This can be understood from Lemma~\ref{lem:initial-growth} (with $\beta=0$ and $c=1-\alpha$ for trait $\delta$). With this
initial condition, we have
\begin{equation}\label{eq:transfer-init-cond}
\beta^K_\ell(0)\xrightarrow[K\rightarrow+\infty]{}(1-\ell\alpha) \ind_{0\leq \ell<\frac{1}{\alpha}}.
\end{equation}

Our main result (Theorem~\ref{thm:transfer-main-new}) gives the asymptotic dynamics of
$\beta^K(t)=(\beta^K_0(t),\ldots,\beta^K_L(t))$ for $t\geq 0$ when $K\rightarrow+\infty$.
We show that the limit is a piecewise affine continuous function, which can be described along successive phases determined by their resident or dominant traits. When the latter trait changes, the fitnesses governing the slopes are modified. Moreover, inside each phase, other changes of slopes are possible due to a delicate balance between mutations, transfer and growth of subpopulations. Our ambition is to cover all
the possible cases: local extinctions, re-emergence of subpopulations, changes of slopes due to
mutation and selection, dynamics when the total population size is $o(K)$, total extinction of the population...
We deduce from the asymptotic dynamics of $\beta^K(t)$ explicit criteria for the occurrence of the different evolutionary outcomes observed in Figure~\ref{fig:simu} (Theorem~\ref{thm:criterion-evol-suicide}). We provide a detailed study of the case of three traits in Section~\ref{sec:3-traits}.

Such approach has already been used in~\cite{durrettmayberry,boviercoquillesmadi}. Durrett and Mayberry~\cite{durrettmayberry}
consider constant population size or pure birth (Yule process) models, with directional mutations and increasing fitness parameter.
They obtain travelling waves of selective sweeps. Bovier, Coquille and
Smadi~\cite{boviercoquillesmadi} consider a model with density-dependence but without transfer, with a single trait with positive
fitness separated from the initial trait by unfit traits. They obtain bounds on the time needed to cross the fitness valley.

In our case, the dynamics is far more complex due to the trade-off between larger birth rates for small trait values and transfer to
higher traits, leading to diverse evolutionary outcomes, including cyclic dynamics or evolutionary suicide. As a consequence, we need
to consider cases where the dynamics of a given trait is completely driven by immigrations due to mutations from the resident trait,
with time inhomogeneous immigration rates (see Theorem~\ref{thm:BPI-general}). This complexifies a lot the analysis.

We first state our main results in
Section~\ref{sec:model}. First, we give our general result  on the convergence of the exponents $\beta_\ell^K$ (Theorem~\ref{thm:transfer-main-new}) in Section~\ref{sec:2-1}. Then, we give general criteria on the parameters
$\tau$, $\delta$ and $\alpha$ for re-emergence of trait 0 and evolutionary suicide in Section~\ref{sec:2-2} (Theorem~\ref{thm:criterion-evol-suicide}). We then study in details the limit process in the case of three traits ($L=2$) in
Section~\ref{sec:3-traits}. The proofs of
Theorems~\ref{thm:transfer-main-new} and~\ref{thm:criterion-evol-suicide} are given in Sections~\ref{sec:proof-main} and~\ref{sec:general}. Useful lemmas on branching processes and branching
processes with immigration are given respectively in Appendices~\ref{sec:linear-BDP} and~\ref{sec:BPI}. Technical lemmas on birth and
death processes with logistic competition and transfer are given in Appendix~\ref{sec:logistic-BDP}. We conclude in
Appendix~\ref{sec:algo} with the algorithmic construction of the limit of the exponents $\beta_\ell^K$, used to perform simulations.

\section{Main results}
\label{sec:model}
We state the principal results of the paper. Their proofs are given in Sections \ref{sec:proof-main} and \ref{sec:general}.

\subsection{Asymptotic dynamics}\label{sec:2-1}

The next result characterizes the asymptotic dynamics of
$\beta^K(t)=(\beta^K_0(t),\ldots,\beta^K_L(t))$ (when $K\rightarrow+\infty$) by a succession of deterministic time intervals $[s_{k-1},s_{k}], k\geq 1$, called phases and delimited by changes of resident or dominant traits. The latter are unique except at times $s_k$ and are denoted by $\ell^*_{k}\delta, k\geq 1$. This asymptotic result holds until a time $T_0$, which guarantees that there is neither ambiguity on these traits (Point~(a) below) nor on the extinct subpopulations at the phase transitions (Point~(c) below).

\begin{thm}
  \label{thm:transfer-main-new}
  Assume that $\alpha\in(0,1)$, that $\delta\in(0,4)$ with $3/\delta\not\in\mathbb{N}$ and $\frac{\tau\pm 3}{\delta}\not\in\mathbb{N}$ and that
  \eqref{eq:transfer-init-cond} hold true.
 \begin{description}
  \item[\textmd{(i)}]
 There exists $T_0>0$ such that, for all $T\in(0,T_0)$, the sequence $(\beta^K(t), t\in[0,T])$ converges in probability in $\mathbb{D}([0,T],[0,1]^L)$ to a deterministic piecewise affine
  continuous function $(\beta(t)=(\beta_1(t),\ldots,\beta_L(t)),t\in[0,T])$, such that $\beta_\ell(0)=(1-\ell\alpha)
  \ind_{0\leq \ell<\frac{1}{\alpha}}$. The functions $\beta$ and $T_0$ are parameterized by $\alpha$, $\delta$ and $\tau$ defined as follows.
  \item[\textmd{(ii)}]
There exists an increasing nonnegative sequence $(s_k)_{k\geq 0}$  and a sequence
  $(\ell^*_k)_{k\geq 1}$ in $\{0,\ldots,L\}$ defined inductively as follows: $s_0=0$, $\ell^*_1=0$,
  and, for all $k\geq 1$, assuming that $s_{k-1}<T_0$ and $\ell^*_k$ have been constructed {and that $\beta(s_{k-1})\neq 0$}, we can construct $s_k>s_{k-1}$ as follows
  \begin{equation}
    \label{eq:rec-sk}
    s_{k}=\inf\{t> s_{k-1}:\exists \ell\neq\ell^*_k,\,\beta_\ell(t)=\beta_{\ell^*_k}(t)\}.
  \end{equation}
  We can then decide whether {we continue the induction after time $s_k$ (i.e.\ $T_0>s_k$) or not} as follows:
  \begin{description}
      \item[\textmd{(a)}] if $\beta_{\ell^*_k}(s_k)>0$, we set
  \begin{equation}
    \label{eq:rec-ell*}
    \ell^*_{k+1}=\arg\max_{\ell\neq \ell^*_{k}}\beta_\ell(s_{k})
  \end{equation}
  if the argmax is unique, or otherwise we set $T_0=s_k$ and we stop the induction;
  \item[\textmd{(b)}] {if $\beta_{\ell^*_k}(s_k)=0$, we set $s_{k+1}=T_0=+\infty$ and $\beta(t)=0$ for all $t\geq s_k$};
\item[\textmd{(c)}] {if in one of the previous cases, we have for some $\ell\neq\ell^*_k$, $\beta_\ell(s_{k})=0$ and $\beta_\ell(s_{k}-\varepsilon)>0$ for all $\varepsilon>0$ small enough, then we also set $T_0=s_k$ and stop the induction; otherwise, the induction proceeds to the next step.}
  \end{description}
  In the case where the induction never stops, we define $T_0=\sup_{k\geq 0} s_k$.
  \item[\textmd{(iii)}] In (ii), the functions $\beta_\ell$ are defined, for all $t\in [s_{k-1},s_k]$, by
  \begin{equation}
    \label{eq:evol-beta_0}
    \beta_0(t) =\left[\mathbbm{1}_{\beta_0(s_{k-1})>0}\,\left(\beta_0(s_{k-1})+\int_{s_{k-1}}^t \widetilde{S}_{s,k}(0;\ell^*_k\delta)\,ds\right)\right]\vee 0
  \end{equation}
  and, for all $\ell\in\{1,\ldots,L\}$,
  \begin{equation}
    \label{eq:evol-beta_j}
    \beta_\ell(t)
    =\left(\beta_\ell(s_{k-1})+\int_{t_{\ell-1,k}\wedge t}^t \widetilde{S}_{s,k}(\ell\delta;\ell^*_k\delta)\,ds\right)
    \vee(\beta_{\ell-1}(t)-\alpha)\vee 0,
  \end{equation}
  where, for all traits $x,y$,
  \begin{equation}
    \label{eq:def-tilde-S}
    \widetilde{S}_{t,k}(y;x)=\mathbbm{1}_{\beta_{\ell^*_k}(t)=1}\,S(y;x)+\mathbbm{1}_{\beta_{\ell^*_k}(t)<1}\,\widehat{S}(y;x)
  \end{equation}
  and where
  \begin{equation}
    \label{eq:def-t-ell-k}
    t_{\ell-1,k}=\begin{cases}
      \inf\{t\geq s_{k-1},\ \beta_{\ell-1}(t)=\alpha\},& \quad \mbox{ if }\beta_\ell(s_{k-1})=0,\\
      s_{k-1}, &  \quad \mbox{ otherwise.}
    \end{cases}
  \end{equation}
\end{description}
In addition, for all $\ell$ and all $a<b<T_0$ such that  the  time interval $[a,b]$ is included in the interior of the zero-set of $\beta_{\ell}$, the event $\{N_{\ell}^K(t\log K) = 0, \forall t\in[a,b]\}$ has a  probability converging to one as $K$ tends to infinity.
\end{thm}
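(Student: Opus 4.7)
The plan is to proceed by induction on the phase index $k\geq 1$. Within each phase $[s_{k-1},s_k]$, the resident or dominant trait $\ell^*_k\delta$ is frozen and the effective fitness function $\widetilde S_{\cdot,k}$ is given by~\eqref{eq:def-tilde-S}, so the dynamics of each subpopulation $N^K_\ell(t\log K)$ can be sandwiched between explicitly tractable birth-death processes with or without immigration, which makes the convergence of $\beta^K_\ell$ amenable to the estimates of Appendices~\ref{sec:linear-BDP}, \ref{sec:BPI} and~\ref{sec:logistic-BDP}. The inductive hypothesis at step $k$ is that $\beta^K$ is close in $\mathbb{D}([0,s_k],[0,1]^L)$ to the claimed piecewise affine $\beta$ on a high-probability event, and that the non-ambiguity conditions defining $\ell^*_{k+1}$ and $s_{k+1}$ hold with probability tending to one. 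The existence of $T_0>0$ will follow because at the initial time the dominant trait $\ell^*_1=0$ is unambiguous by~\eqref{eq:transfer-init-cond}, and the non-degeneracy conditions from~\eqref{eq:rec-ell*} and case~(c) are open, hence persist on a non-trivial initial interval.

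For a single phase I would distinguish two regimes. If $\beta_{\ell^*_k}(s_{k-1})=1$, the subpopulation $N^K_{\ell^*_k}$ has size of order $K$ and a standard large-population limit of Ethier--Kurtz type (applied on real-time windows of order one, which are negligible on the $\log K$ scale) shows that it tracks $K\bar n(\ell^*_k\delta)$ with controlled fluctuations; this freezes the competition term $d_K$, so that the linearized growth rate of trait $\ell\delta$ is $S(\ell\delta;\ell^*_k\delta)$ as in~\eqref{eq:general-fitness}. If $\beta_{\ell^*_k}(s_{k-1})<1$, the total population size is $o(K)$, competition is asymptotically negligible, and the linearized growth rate is $\widehat S(\ell\delta;\ell^*_k\delta)$ as in~\eqref{eq:fitness-chap}. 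In both regimes, for each $\ell\neq\ell^*_k$ the process $N^K_\ell$ is coupled from above and below with birth-death processes whose time-inhomogeneous immigration rate is of order $K^{-\alpha} b((\ell-1)\delta)N^K_{\ell-1}(t\log K)$, coming from mutations from trait $(\ell-1)\delta$. Invoking Theorem~\ref{thm:BPI-general}, Lemma~\ref{lem:BP} and the comparison lemmas of Appendix~\ref{sec:logistic-BDP} on these couplings yields the three-way maximum of~\eqref{eq:evol-beta_j}: the free-growth term dominates when the subpopulation is alive at $s_{k-1}$, the immigration-driven term $\beta_{\ell-1}(t)-\alpha$ dominates when mutations replenish an extinct or small trait, and $0$ is the absorbing lower floor; formula~\eqref{eq:evol-beta_0} for $\beta_0$ is similar but without an incoming mutation source.

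The phase boundary $s_k$ is defined by~\eqref{eq:rec-sk} as the first time another exponent catches up with the dominant one, and cases~(a), (b), (c) correspond to the three possibilities at that instant: (a) a unique non-dominant trait catches up with $\beta_{\ell^*_k}(s_k)>0$, so the new dominant is unambiguously $\ell^*_{k+1}$ and the induction proceeds; (b) the dominant exponent itself reaches $0$, which corresponds to total extinction and terminates the dynamics; (c) either the argmax in~\eqref{eq:rec-ell*} is not unique, or a previously living subpopulation exits positivity exactly at $s_k$, creating an ambiguity in the next phase that we choose not to resolve by setting $T_0=s_k$. In case~(a), the strong Markov property of the population process combined with the fact that $\beta^K(s_k)\to\beta(s_k)$ in probability on the inductive event allows to restart the coupling argument of the previous paragraph with $\ell^*_{k+1}$ playing the role of the resident or dominant trait, and propagates the inductive hypothesis to phase $k+1$.

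For the final addendum on zero-sets, observe that on any $[a,b]$ contained in the interior of the zero-set of $\beta_\ell$, formula~\eqref{eq:evol-beta_j} forces both the free-growth and the immigration-driven terms to be non-positive, so in particular $\beta_{\ell-1}(t)-\alpha<0$ on $[a,b]$. The total immigration rate into trait $\ell\delta$ from mutations is then of order $K^{\beta_{\ell-1}(t)-\alpha}$ per unit of slow time, which tends to $0$ uniformly in $t\in[a,b]$. Since $\beta^K_\ell\to 0$ forces $N^K_\ell(t\log K)=0$ with probability tending to one at a time slightly before $a$, coupling with a dominating birth-death process with immigration of rate $o(1)$ on $[a,b]$ keeps $N^K_\ell$ at zero throughout $[a,b]$ with high probability. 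The main obstacle in the whole argument is to maintain quantitative control of these couplings across successive phase transitions so as to propagate the induction without losing the precision needed to identify $\ell^*_{k+1}$ and the sign of $\beta_{\ell^*_k}(s_k)$; this is compounded by the time inhomogeneity of the mutation-immigration rate into trait $\ell\delta$, which is precisely why Theorem~\ref{thm:BPI-general} is required rather than a classical constant-rate immigration result.
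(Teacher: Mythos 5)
Your overall plan matches the paper's strategy: a double induction (on phase index $k$, and within each phase on the trait index $\ell$), couplings of $N^K_\ell$ with branching processes with immigration to invoke Theorem~\ref{thm:BPI-general} and Lemma~\ref{lem:BP}, logistic comparison for the resident, and the split between the resident regime ($\beta_{\ell^*_k}=1$, fitness $S$) and the dominant-but-$o(K)$ regime ($\beta_{\ell^*_k}<1$, fitness $\widehat S$). Your treatment of the zero-set addendum via a dominating process with $o(1)$ immigration is also the paper's mechanism, through Lemma~\ref{lem:non-emergence} and~\eqref{eq:BPI-extinction}.

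There is, however, a genuine gap at the phase transitions. You assert that ``the strong Markov property... combined with the fact that $\beta^K(s_k)\to\beta(s_k)$ in probability... allows to restart the coupling argument'' with the new dominant trait. But at time $s_k\log K$ the exponents of $\ell^*_k\delta$ and $\ell^*_{k+1}\delta$ are both $\approx\beta_{\ell^*_k}(s_k)$: in the resident case both populations are of order $K$, the competition between them is unresolved, and $N^K_{\ell^*_{k+1}}$ is nowhere near its equilibrium $K\bar n(\ell^*_{k+1}\delta)$. The Markov property alone does not hand you the clean initial condition your inductive hypothesis requires for the next phase. The paper bridges this with explicit \emph{intermediate phases} $[\theta^K_k\log K,\sigma^K_{k+1}\log K]$, of real-time length $O(1)$ (negligible on the $\log K$ scale), during which the two-type logistic/transfer competition Lemmas~\ref{lem:competition} and~\ref{lem:competitionTBDI} (``invasion implies fixation'') drive the old dominant below a threshold $m\varepsilon K$ and the new one to a $3\varepsilon$-neighborhood of equilibrium. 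This step, together with the careful definition of stopping times $\theta^K_k$, $\sigma^K_k$ and the constraint~\eqref{eq:ineg-m} on the parameter $m$, is precisely what propagates the induction, and it is absent from your proposal. A related omission is the dominant-to-resident transition within a single phase (case (b) of Step $k$, case 2 in the paper), where $\widetilde S_{t,k}$ switches from $\widehat S$ to $S$ at the first time $\beta_{\ell^*_k}(t)=1$; this also needs a separate stopping time $\widehat\theta^K_k$ and an application of Lemma~\ref{lem:sable-popu}(ii) rather than a direct restart.
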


\begin{rem}
\begin{enumerate}
\item It follows from the definition of $s_k$ and $\ell^*_{k+1}$ that $\max_\ell \beta_\ell(t)=\beta_{\ell^*_k}(t)$ for all $t\in[s_{k-1},s_k)$.
    \item In \eqref{eq:def-tilde-S}, when $\beta_{\ell^*_k}(t)=1$ for some $t\in(s_{k-1},s_k)$, there is a single resident trait $\ell^*_k \delta$ with
population of the order of $K$ and the function $S$ defined in \eqref{eq:general-fitness} is used.
In the case where $\beta_{\ell^*_k}(t)<1$, there is a single dominant trait and the total population size is of order $o(K)$ and the
fitness function is $\widehat{S}$ defined in \eqref{eq:fitness-chap}. 
During each phase, the function $\widetilde{S}_{t,k}$ is actually constant, equal to $S$ or $\widehat{S}$ as above, except when a dominant population becomes resident in the same phase. In the first case, for all $t\in[s_{k-1},s_k)$, Eq.~\eqref{eq:evol-beta_0} and~\eqref{eq:evol-beta_j} take the simpler form
  \begin{equation*}
    \beta_0(t) =\begin{cases}
      \big[\mathbbm{1}_{\beta_0(s_{k-1})>0}\,\big(\beta_0(s_{k-1})+S(\ell\delta;\ell^*_k\delta)(t-s_{k-1})\big)\big]\vee 0 & \text{if }\beta_{\ell^*_k}(s_{k-1})=1, \\
        \big[\mathbbm{1}_{\beta_0(s_{k-1})>0}\,\big(\beta_0(s_{k-1})+\widehat{S}(\ell\delta;\ell^*_k\delta)(t-s_{k-1})\big)\big]\vee 0 & \text{if }\beta_{\ell^*_k}(s_{k-1})<1
    \end{cases}
  \end{equation*}
  and, for all $\ell\in\{1,\ldots,L\}$,
  \begin{equation}
   \label{eq:evol-beta_j-simple}
    \beta_\ell(t)
    =\begin{cases}
      \big(\beta_\ell(s_{k-1})+S(\ell\delta;\ell^*_k\delta)(t-t_{\ell-1,k})_+\big)
    \vee(\beta_{\ell-1}(t)-\alpha)\vee 0 & \text{if }\beta_{\ell^*_k}(s_{k-1})=1, \\
       \big(\beta_\ell(s_{k-1})+\widehat{S}(\ell\delta;\ell^*_k\delta)(t-t_{\ell-1,k})_+\big)
    \vee(\beta_{\ell-1}(t)-\alpha)\vee 0 & \text{if }\beta_{\ell^*_k}(s_{k-1})<1.
    \end{cases}
  \end{equation}
  Otherwise, $\widetilde{S}_{t,k}$ switches from $\widehat{S}$ to $S$ at the first time where $\max_\ell \beta_\ell(t)=\beta_{\ell^*_k}(t)=1$. Therefore, since $S(\ell^*_k\delta,\ell^*_k\delta)=0$, we obtain in all cases
  \begin{equation*}
\beta_{\ell^*_k}(t)
=\begin{cases}
  1 & \text{if }\beta_{\ell^*_k}(s_{k-1})=1, \\
  \left[\left(\beta_{\ell^*_k}(s_{k-1})+\widehat{S}(\ell^*_k\delta;\ell^*_k\delta)\,(t-s_{k-1})\right)\wedge 1\right]\vee 0 & \text{if }\beta_{\ell^*_k}(s_{k-1})<1.
\end{cases}
\end{equation*}
      \item It follows from the last formula that $\max_\ell \beta_\ell(t)\leq 1$ for all $t\in[0,T_0]$.
      \item When $\beta_{\ell}(s_{k-1})=0$, the time $t_{\ell-1,k}$ corresponds to the first time where the incoming mutation rate in subpopulation $\ell\delta$ becomes significant.
      \end{enumerate}
\end{rem}
\begin{rem}
If the initial condition~\eqref{eq:init} was replaced by $N^K(0)=(\lfloor\frac{3K}{C}\rfloor,0,\ldots,0)$, the convergence in (i) would hold on $[\log K,T\wedge T_0]$ instead of $[0,T\wedge T_0]$.
\end{rem}



\begin{figure}[!ht]
\begin{center}
\begin{tabular}{cc}
\includegraphics[width=7.5cm,height=4cm]{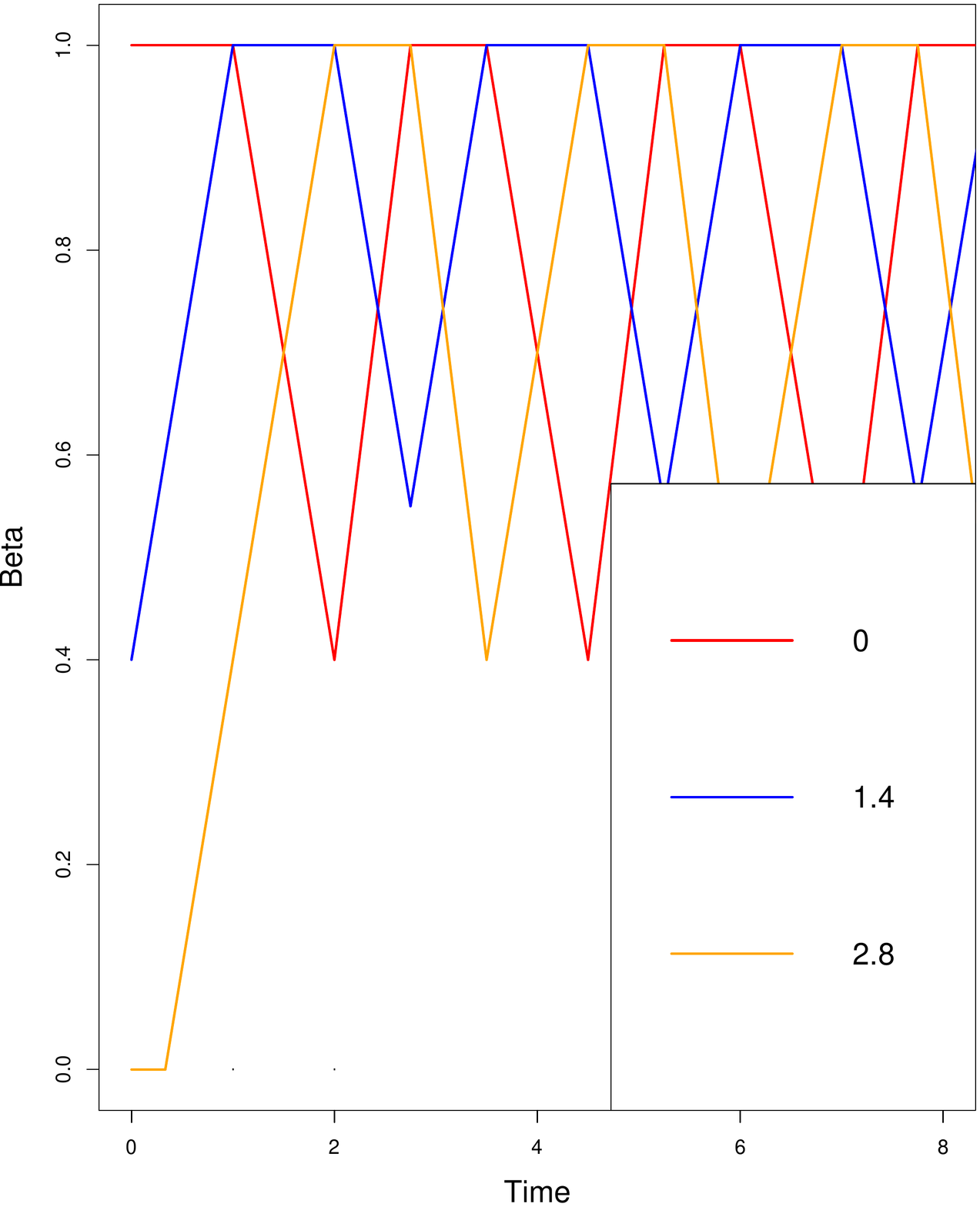}  &  \includegraphics[width=7.5cm,height=4cm]{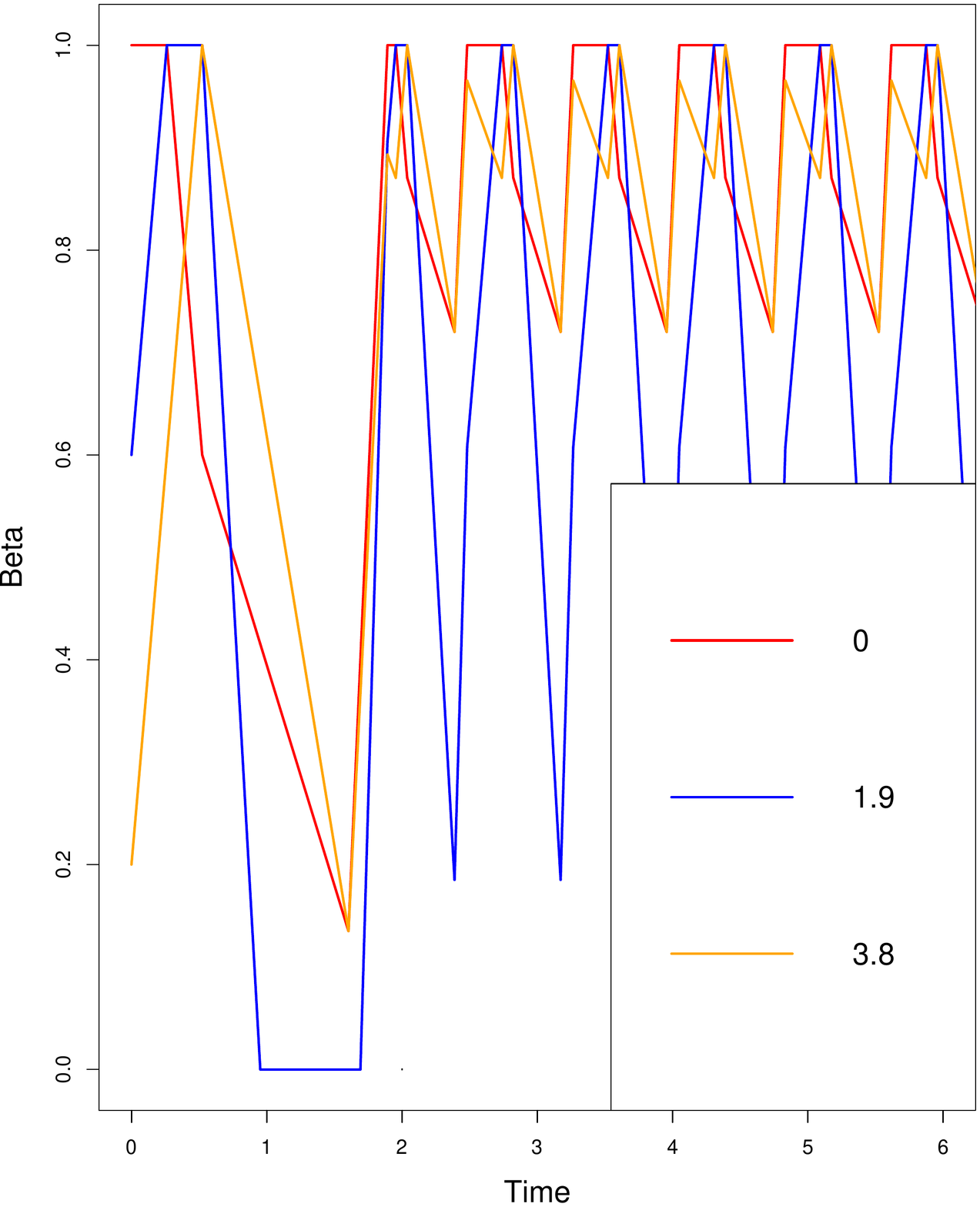}  \\
(a) & (b)  \\ \includegraphics[width=7.5cm,height=4cm]{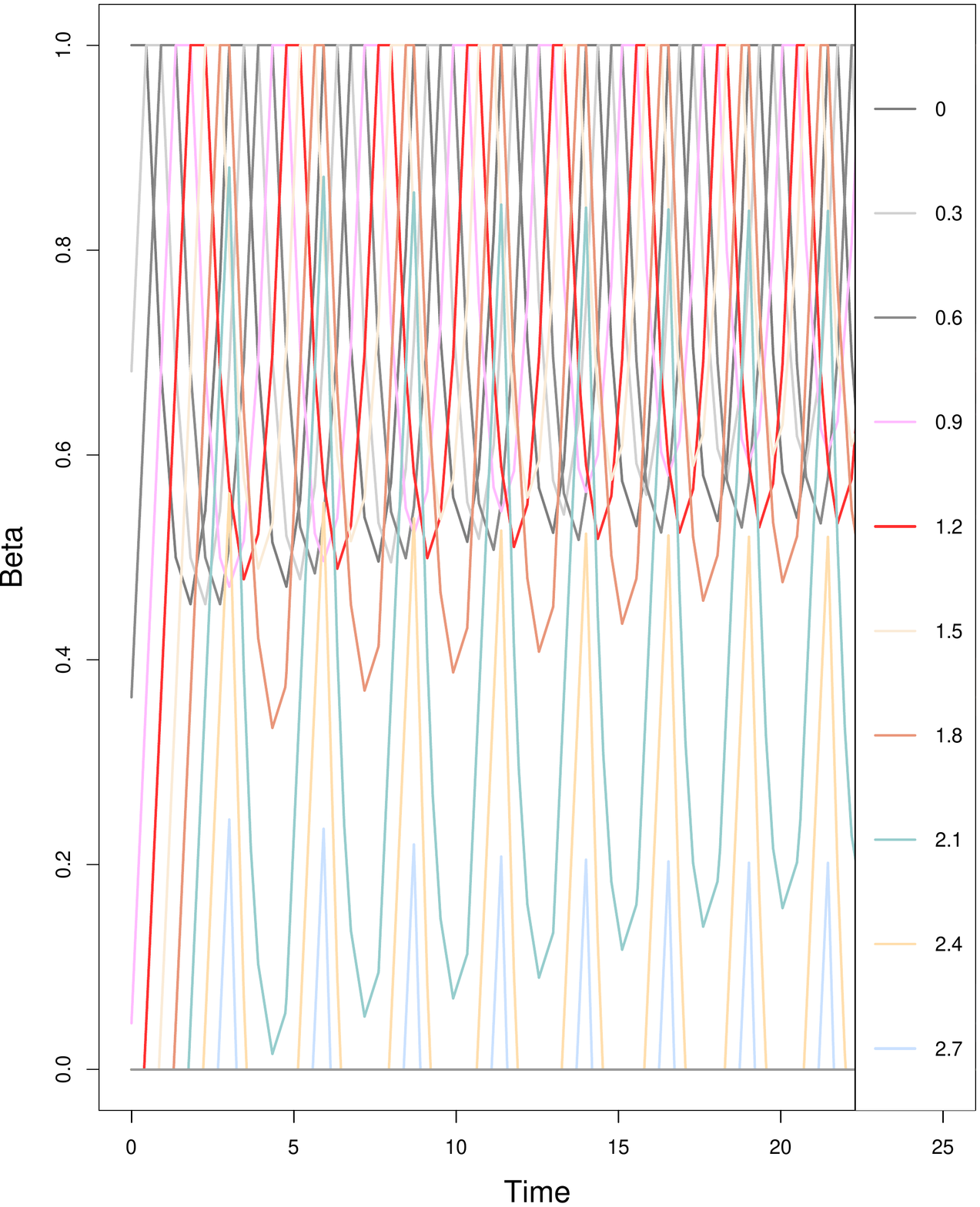}  &  \includegraphics[width=7.5cm,height=4cm]{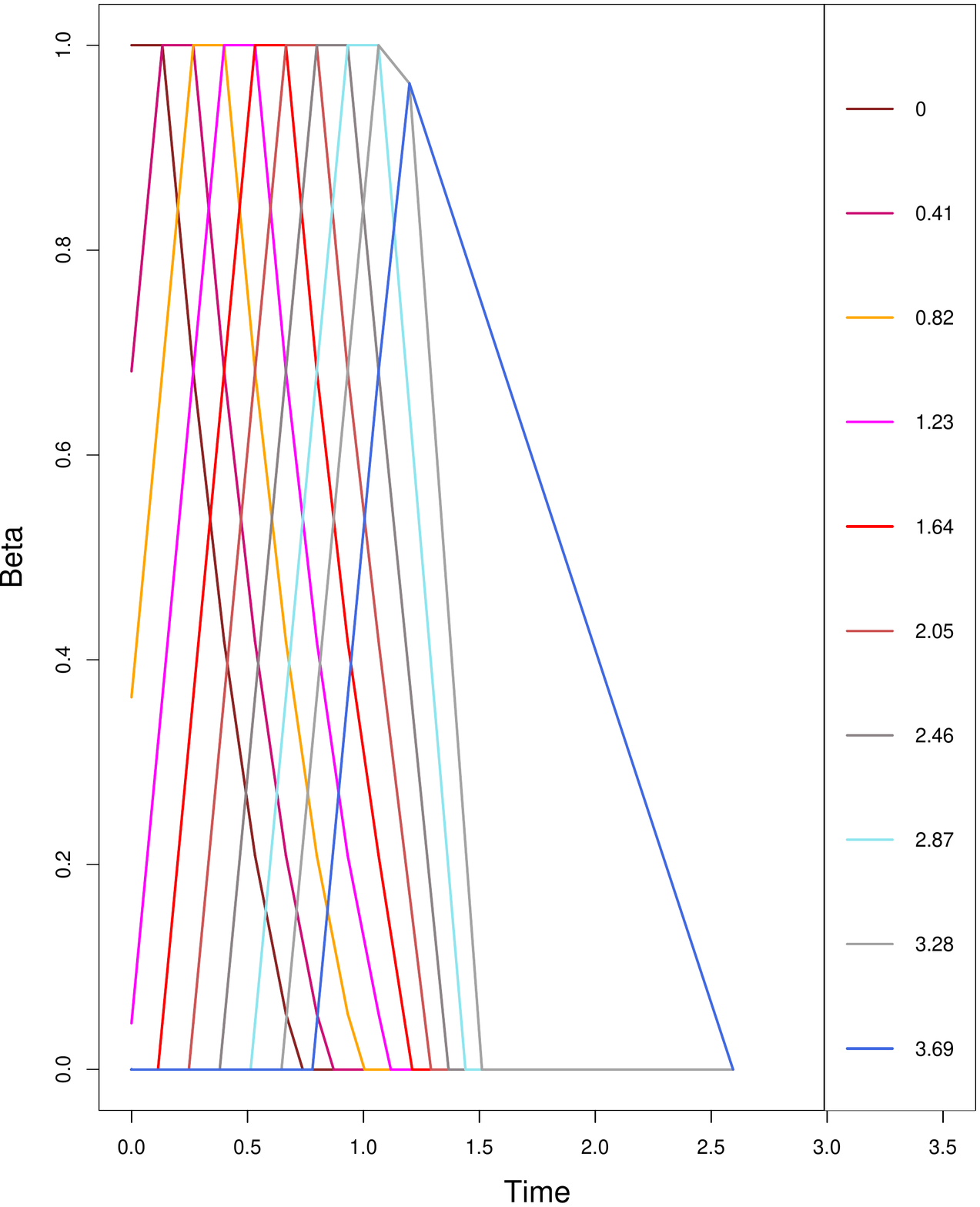}  \\
(c) & (d)
 \end{tabular}
 \caption{{\small \textit{Exponents $\beta_\ell(t)$ as functions of time. (a): $\delta=1.4$, $\alpha=0.6$, $\tau=2$. We see a
       {periodic} behavior showing re-emergences of the fittest traits. (b): $\delta=1.9$,
       $\alpha=0.4$, $\tau=3.43$. When the trait $2\delta$ becomes dominant, the population size is of order $o(K)$. We see a
       re-emergence of trait 0 after a phase of apparent macroscopic extinction (i.e.\ a total population size $o(K)$). Although the
       trait $\delta$ goes extinct while $2\delta$ is dominant, it is recreated by mutations from trait 0. (c): $\delta=0.3$,
       $\alpha=1/\pi$, $\tau=1$. A cyclic but non-periodic behaviour is observed. (d): $\delta=0.41$, $\alpha=1/\pi$, $\tau=2.8$. The
       population is directly driven to evolutionary suicide. }}}\label{fig:simu2}
\end{center}
\end{figure}

We cannot ensure that $T_0=+\infty$ for almost all parameters $\alpha$, $\delta$ and $\tau$. However we have not encountered any case
where $T_0<+\infty$ in the simulations. In the sequel, we exhibit large sets of parameters where $T_0=+\infty$ in the case of three
traits (Section~\ref{sec:3-traits}). We also prove in Theorem~\ref{thm:criterion-evol-suicide} that, for any sets of parameters,
$T_0$ is larger than the time of extinction or the time of first re-emergence.

Note that the previous result keeps track  of populations of size $K^\beta$ for $0<\beta\leq 1$, but not of  populations of smaller order, which go fast to extinction on the time scale
  $\log K$.

  \medskip The next theorem gives a characterization of $\beta$ as solution of a dynamical system.
\begin{cor}
  \label{thm:transfer-main}
  Under the assumptions of Theorem \ref{thm:transfer-main-new}, we set
   $$\,\ell^*(t)\,= \sum_{k\geq 1} \ell^*_{k}\mathbbm{1}_{[s_{k-1},s_{k}[}(t) \hbox{ and }   \widetilde{S}_{t}(y;x) =  \mathbbm{1}_{\beta_{\ell^*(t)}(t)=1}\,S(y;x)+\mathbbm{1}_{\beta_{\ell^*(t)}(t)<1}\,\widehat{S}(y;x). $$
 The function $\beta(t)$ is {right-}differentiable {on $[0,T_0)$} and
   satisfies
  \begin{equation}
    \label{eq:probleme-limite}
      \dot{\beta}_\ell(t) =
      \Sigma_\ell(t)\ind_{\beta_\ell(t)>0\text{ or }(\beta_\ell(t)=0\text{ and }\beta_{\ell-1}(t)=\alpha)} \end{equation}
 where $\Sigma_\ell$ is defined recursively by
 $\ \Sigma_0(t)= \widetilde{S}_{t}(0, \delta \ell^*(t))\ $ and $ \forall \ell\geq 1$
   \begin{eqnarray}
   \label{rec-sigma}
    \Sigma_\ell(t)&=&\begin{cases} \widetilde{S}_{t}(\ell\delta;\ell^*(t)\delta) \vee  \Sigma_{\ell-1}(t)&\hbox{ if } \beta_\ell(t) =  \beta_{\ell-1}(t)-\alpha\\
    \widetilde{S}_{t}(\ell\delta;\ell^*(t)\delta) &\hbox{ if } \beta_\ell(t) >  \beta_{\ell-1}(t)-\alpha.
    \end{cases}
    \end{eqnarray}
\end{cor}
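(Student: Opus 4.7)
The corollary is essentially a direct consequence of the explicit piecewise formulas \eqref{eq:evol-beta_0}--\eqref{eq:evol-beta_j} of Theorem~\ref{thm:transfer-main-new}, obtained by differentiation on each phase. My plan is to work on one fixed phase $[s_{k-1},s_k)$ at a time, on which $\ell^*(t)\equiv\ell^*_k$ and the function $s\mapsto\widetilde{S}_{s,k}$ takes at most two constant values (switching from $\widehat{S}$ to $S$ precisely at the first time $\beta_{\ell^*_k}(s)$ reaches $1$, if such a time exists in the phase). Since $\beta$ is continuous and piecewise affine on such a phase, its right derivative exists at every $t\in[s_{k-1},s_k)$ with at most finitely many jumps, so it suffices to identify this right derivative with the right-hand side of \eqref{eq:probleme-limite} off those breakpoints and then conclude on the whole phase by continuity.

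For $\beta_0$, formula \eqref{eq:evol-beta_0} gives directly that either $\beta_0\equiv 0$ on the phase (when $\beta_0(s_{k-1})=0$) or $\beta_0$ follows the antiderivative of $\widetilde{S}_{s,k}(0;\ell^*_k\delta)$ until a possible hitting of $0$, after which it stays clamped at $0$. Hence its right derivative equals $\Sigma_0(t)=\widetilde{S}_t(0;\ell^*(t)\delta)$ on $\{\beta_0>0\}$ and vanishes on the interior of $\{\beta_0=0\}$, which matches \eqref{eq:probleme-limite} in the base case (where, for $\ell=0$, the feeder mechanism is absent and the indicator reduces to $\ind_{\beta_0>0}$).

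For $\ell\geq 1$, I proceed by induction on $\ell$. Writing \eqref{eq:evol-beta_j} as $\beta_\ell(t)=f_\ell(t)\vee(\beta_{\ell-1}(t)-\alpha)\vee 0$, where $f_\ell$ denotes the integral term, I distinguish three regimes at a non-breakpoint $t$: (i) if $\beta_\ell(t)>(\beta_{\ell-1}(t)-\alpha)\vee 0$, only the intrinsic term is active and the right derivative is $\widetilde{S}_t(\ell\delta;\ell^*(t)\delta)$, agreeing with the second line of \eqref{rec-sigma}; (ii) if $\beta_\ell(t)=\beta_{\ell-1}(t)-\alpha>0$, both $f_\ell$ and $\beta_{\ell-1}-\alpha$ attain the maximum, so by elementary properties of the max of two right-differentiable functions and by the induction hypothesis, the right derivative of $\beta_\ell$ equals $\widetilde{S}_t(\ell\delta;\ell^*(t)\delta)\vee\Sigma_{\ell-1}(t)$, which is exactly the first line of \eqref{rec-sigma}; (iii) if $\beta_\ell(t)=0$, then $\beta_\ell$ stays at $0$ until the feeder $\beta_{\ell-1}-\alpha$ becomes positive, i.e.\ until $\beta_{\ell-1}$ reaches $\alpha$, which coincides with the instant $t_{\ell-1,k}$ identified in \eqref{eq:def-t-ell-k}. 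Comparing the three regimes with the indicator in \eqref{eq:probleme-limite} yields the claim.

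The main delicate point, and the expected principal obstacle, is the treatment of the boundary instants: the times $t_{\ell-1,k}$ where $\beta_\ell$ starts being fed by mutations from the $(\ell-1)$-th class, the times where $\beta_\ell$ joins or separates from $\beta_{\ell-1}-\alpha$ or from the level $0$, and the phase endpoints $s_k$ themselves. At each such transition one must verify the consistency of the one-sided derivatives on both sides, in particular that at $t=t_{\ell-1,k}$ the sign of $\Sigma_{\ell-1}$ is indeed compatible with $\beta_{\ell-1}$ crossing $\alpha$ upwards (so that the indicator in \eqref{eq:probleme-limite} turns on at the correct instant), and that at $s_k$ the definitions of $\ell^*_{k+1}$ in \eqref{eq:rec-ell*} and of $\widetilde{S}_{t}$ propagate through the recursion without ambiguity by the assumption $t<T_0$. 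These are finitary verifications using the algorithmic construction of $\beta$ described in Theorem~\ref{thm:transfer-main-new} and do not require any new probabilistic argument beyond what has already been established there.
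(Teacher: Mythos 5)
Your plan — differentiate the explicit max formula $\beta_\ell(t)=f_\ell(t)\vee(\beta_{\ell-1}(t)-\alpha)\vee 0$ from Theorem~\ref{thm:transfer-main-new} phase by phase, arguing by induction on $\ell$ and matching the right derivative with the recursion \eqref{rec-sigma} — is the right one, and the base case $\ell=0$ together with regimes (i) and (iii) is handled correctly. But there is a genuine gap in regime (ii). You assert that whenever $\beta_\ell(t)=\beta_{\ell-1}(t)-\alpha>0$, ``both $f_\ell$ and $\beta_{\ell-1}-\alpha$ attain the maximum.'' That is false: in \eqref{eq:evol-beta_j} the integral term $f_\ell$ is a fixed (piecewise) affine function of $t$, not adapted to the feeder, and there are whole open sub-intervals of a phase on which $\beta_\ell(t)=\beta_{\ell-1}(t)-\alpha$ while $f_\ell(t)<\beta_{\ell-1}(t)-\alpha$ strictly. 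A concrete instance occurs in Phase 3 of Section~\ref{sec:case-2a} when $\tau-\delta>2\delta-\tau$: there $\beta_1(s_2+t)=[1-(\tau-\delta)t]\vee[(1-2\alpha)+(2\delta-\tau)t]\vee 0$ and the feeder bracket strictly dominates the intrinsic one on the whole interval $(2\alpha/\delta,\ \alpha/(2\delta-\tau))$.

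On such an interval the max rule gives right derivative $\Sigma_{\ell-1}(t)$, whereas \eqref{rec-sigma} asserts $\widetilde{S}_t(\ell\delta;\ell^*(t)\delta)\vee\Sigma_{\ell-1}(t)$. These agree precisely when $\widetilde{S}_t(\ell\delta;\ell^*(t)\delta)\le\Sigma_{\ell-1}(t)$, and this inequality is the missing ingredient. It does hold, but not for an elementary reason about maxima: one must argue that once $f_\ell$ drops strictly below $\beta_{\ell-1}-\alpha$ (which forces $\widetilde{S}\le\Sigma_{\ell-1}$ at the crossing) the inequality persists, because within a phase $\widetilde{S}_t(\ell\delta;\ell^*_k\delta)$ is piecewise constant, $\Sigma_{\ell-1}$ can only jump upward at the relevant transitions (itself proved by a nested induction on $\ell$), and the single $\widehat{S}\to S$ switch shifts $\widetilde{S}$ and $\Sigma_{\ell-1}$ by the same constant $3-\ell^*_k\delta$, leaving their difference unchanged. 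This is not among the ``boundary instants'' you flag at the end: the discrepancy would live on a whole open sub-interval, not at isolated transition times. Your regime (ii) must therefore be split to allow the feeder to be the unique maximizer, and the monotonicity argument closing the resulting gap has to be supplied.
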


\begin{rem}
  One may wonder if the ODE \eqref{eq:probleme-limite} characterizes the function $\beta$. For this we first need to characterize
  $\ell^*(t)$ as an explicit function of $\beta(t)$. One would like to define it as $\ell^*(t)= \arg\max_{0\leq \ell\leq L}
  \beta_{\ell}(t)$ and take it right-continuous. This is correct if there is a single argmax. Otherwise, there are by
  {definition of $T_0$} only two choices $\ell$ and $\ell'$ and there is a single admissible
  choice in the sense that the corresponding {affine} solution to \eqref{eq:probleme-limite} on
  $[t,t+\varepsilon]$ satisfies $\ell^*(s)= \arg\max_{0\leq \ell\leq L} \beta_{\ell}(s)$ locally for $s\in(t,t+\varepsilon)$ for
  $\varepsilon>0$ small enough. Indeed if $\max_{0\leq \ell\leq L} \beta_{\ell}(t)=1$ and since
  $S(\ell'\delta,\ell\delta)=-S(\ell\delta,\ell'\delta)$, one of the two fitnesses is positive, for example
  $S(\ell\delta,\ell'\delta)$. If one takes the wrong choice $\ell^*(t)= \ell'$, then $\Sigma_{\ell}(t) =
  S(\ell\delta,\ell'\delta)>0$, hence the solution of \eqref{eq:probleme-limite} gives $\beta_{\ell} (s)>1$ for $s>t$ locally, which is forbidden. If
  $\max_{0\leq \ell\leq L} \beta_{\ell}(t)<1$, a similar argument with $\widehat S$ consists in choosing the trait with higher
  invasion fitness.

  Therefore,~\eqref{eq:probleme-limite} can be expressed as an autonomous ODE and there is a unique admissible solution.
  Generalizations of our result to models with different birth, death and transfer rates, can be obtained by changing accordingly the fitness
  function in this ODE.
\end{rem}

Simulations are shown in Figure~\ref{fig:simu2} for various parameter values. The times $s_k$ correspond to changes of resident or
dominant populations. However, we observe several changes of slopes between these times. The computation of these successive times
called $t_k$ is given in Theorem~\ref{cor:transfer-main} in Appendix~\ref{sec:algo}.


\subsection{Re-emergence of trait 0}\label{sec:2-2}

In Figure~\ref{fig:simu2}, we have exhibited different evolutionary dynamics (re-emergence of a trait, cyclic behavior, local extinction, evolutionary suicide). By re-emergence of a trait $\ell\delta$, we mean that $\beta_\ell(s)=1$ on some non-empty time interval $[t_1,t_2]$, then
$\beta_\ell(s)<1$ on some non-empty interval $(t_2,t_3)$ and then $\beta_\ell(s)=1$ again on some non-empty interval $[t_3,t_4]$. We would like to predict the evolutionary outcome as a function of parameters $\alpha,\delta,\tau$.
As detailed for three traits ($L=2$) in the next section, there are so many situations that  we are not able to fully
characterize the outcomes. Therefore, we focus on the beginning of the dynamics until either global extinction or re-emergence of one trait occurs.
The resurgence of trait $0$ is a prerequisite for a cyclic dynamics as those observed in
Figures~\ref{fig:simu}~(c).

We assume that $\delta<4/3$ (so that $L\geq 3$) and only consider the case $\delta<\tau<3$. Let
\begin{equation}
\label{eq:def-k}
\widetilde{k}:=\lceil \frac{\tau}{\delta}\rceil \quad\hbox{and}\quad \bar{k}=\lfloor 2\frac{\tau}{\delta}
\rfloor. 
\end{equation}
We will see in the proof of the next result that, for the first phases,
\begin{equation*}
    s_k:=\frac{k\alpha}{\tau-\delta},
\end{equation*}
the trait $k \delta$ is resident on $[s_{k},s_{k+1})$ ($ \beta_k(s)=1$) and  for all $s\in
[s_{k},s_{k+1})$,
\begin{equation*}
  \beta_\ell(s)=
  \begin{cases}
    \left[1-(\ell-k)\alpha+(\tau-\delta)(s-s_k)\right]\vee 0 & \text{if }k<\ell\leq L, \\
    1-\frac{\alpha(k-\ell-1)}{\tau-\delta}\left(\tau-\frac{k-\ell}{2}\delta\right)-(\tau-(k-\ell)\delta)(s-s_k) &
    \text{if }0\leq \ell<k.
  \end{cases}
\end{equation*}
These formulas stay valid until either $\beta_{0}(s)=0$ (loss of $0$), or $\beta_{0}(s)=1$ for some $s>s_{1}$ (re-emergence of $0$), or $\ell^*_{k} \delta >3$, where $\ell^*_k$ has been defined in~\eqref{eq:rec-ell*} (the population size becomes $o(K)$). The function $\beta_{0}(s)$ in the previous equation is piecewise affine and its slope becomes positive at time $s_{\widetilde{k}}$. Hence its minimal value   is equal to
\begin{equation}
\label{eq:def-m0}
m_{0} = \beta_{0}(s_{\widetilde{k}}) = 1- \frac{\alpha(\widetilde{k}-1)}{\tau-\delta}\Big(\tau-\frac{\widetilde{k}}{2}\delta\Big).
\end{equation}
Provided the latter is positive, $\beta_{0}$ reaches $1$ again in phase $[s_{\bar{k}},s_{\bar{k}+1})$ at time
\begin{equation}
\label{eq:def-tau-bar}
  \bar{\tau}:=s_{\bar{k}}+\frac{\alpha(\bar{k}-1)}{\tau-\delta}\,\frac{\tau-\frac{\bar{k}}{2}\delta}{\bar{k}\delta-\tau}=s_{\lfloor 2\frac{\tau}{\delta}\rfloor}+\frac{\alpha(\lfloor 2\frac{\tau}{\delta}\rfloor-1)}{\tau-\delta}\,\frac{\tau-\frac{\lfloor 2\frac{\tau}{\delta}\rfloor}{2}\delta}{\lfloor 2\frac{\tau}{\delta}\rfloor\delta-\tau}.
\end{equation}

\begin{thm}
  \label{thm:criterion-evol-suicide}
  Assume $\delta<\tau<3$, $\delta<4/3$ and under the assumptions of Theorem \ref{thm:transfer-main-new},
  \begin{description}
  \item[\textmd{(a)}] If $m_{0}>0$ and
    $\bar{k}\delta<3$, then the first re-emerging trait  is  $0$ and the maximal
    exponent is always 1 until this re-emergence time.
  \item[\textmd{(b)}] If $m_{0}<0$, the trait $0$ gets lost before its re-emergence and there is
    global extinction of the population before the re-emergence of any trait.
  \item[\textmd{(c)}] If $m_0>0$ and $\bar{k}\delta>3$, there is re-emergence of some trait $\ell\delta<3$ and, for some time $t$
    before the time of first re-emergence, $\max_{1\leq \ell\leq L}\beta_\ell(t)<1$.
  \end{description}
\end{thm}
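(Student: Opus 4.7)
The plan is to apply Theorem~\ref{thm:transfer-main-new} inductively and verify that, until one of the three critical events occurs (loss of trait~$0$, return of $\beta_0$ to~$1$, or appearance of a resident trait exceeding~$3$), each phase $[s_k,s_{k+1})$ has resident trait $k\delta$, length $\alpha/(\tau-\delta)$, and $\beta_\ell$ is given by the explicit formulas displayed just above the theorem. On phase $k$, where $\beta_k\equiv 1$ and hence $\widetilde{S}_{t,k}=S$, the autonomous slope of $\beta_\ell$ is $S(\ell\delta;k\delta)=\tau-(\ell-k)\delta$ for $\ell>k$ and $(k-\ell)\delta-\tau$ for $\ell<k$. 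For $\ell=k+1$, linear growth from $1-\alpha$ at rate $\tau-\delta$ reaches~$1$ at $s_{k+1}=s_k+\alpha/(\tau-\delta)$, which identifies $\ell^*_{k+2}=k+1$; uniqueness of the argmax uses the genericity assumptions on $\tau,\delta$. Telescoping the slope increments $((k-\ell)\delta-\tau)\alpha/(\tau-\delta)$ over phases $\ell+1,\ldots,k-1$ yields the stated closed form for $\beta_\ell(s_k)$.

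The minimum of $\beta_0$ is attained at $s_{\widetilde{k}}$, where its autonomous slope first becomes positive, with value $m_0$ as in~\eqref{eq:def-m0}. Under the hypotheses of case~(a), $m_0>0$ keeps $\beta_0>0$ throughout, and $\bar{k}\delta<3$ ensures that every resident trait $k\delta$ satisfies $3-k\delta>0$, so the resident subpopulation maintains a size of order $K$, $\widetilde{S}_{t,k}=S$ in every phase, and $\max_\ell\beta_\ell\equiv 1$ until re-emergence. A direct algebraic manipulation shows that $\bar{k}=\lfloor 2\tau/\delta\rfloor$ is the smallest $k\geq\widetilde{k}$ with $(1-\beta_0(s_k))/(k\delta-\tau)\leq\alpha/(\tau-\delta)$, so $\beta_0$ first returns to~$1$ in phase $[s_{\bar{k}},s_{\bar{k}+1})$ at the time $\bar{\tau}$ of~\eqref{eq:def-tau-bar}. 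Translation invariance $S(y+\delta;x+\delta)=S(y;x)$ implies that any trait $\ell\delta$ with $\ell\geq 1$ would similarly re-emerge only at time $s_\ell+\bar{\tau}>\bar{\tau}$, so trait~$0$ is indeed the first.

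In case~(b), $m_0<0$ forces the affine formula for $\beta_0$ to cross~$0$ strictly before $s_{\widetilde{k}}$, so by~\eqref{eq:evol-beta_0} trait~$0$ is permanently lost at a time $t^{(0)}_{\mathrm{ext}}<s_{\widetilde{k}}$. By translation invariance, the dynamics of the remaining traits after $t^{(0)}_{\mathrm{ext}}$ is governed by the same fitness structure and the same threshold $m_0<0$; checking that the surviving exponents at $t^{(0)}_{\mathrm{ext}}$ inherit the spacing pattern~\eqref{eq:transfer-init-cond} up to a shift in index and time allows the same argument to be iterated. Successive traits $\delta,2\delta,\ldots$ are then lost before any can re-emerge, producing global extinction of the population.

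In case~(c), $m_0>0$ preserves trait~$0$, but $\bar{k}\delta>3$ forces some phase index $k^\sharp\leq\bar{k}$ to satisfy $k^\sharp\delta>3$. When $k^\sharp\delta$ becomes the dominant trait, $\widehat{S}(k^\sharp\delta;k^\sharp\delta)=3-k^\sharp\delta<0$, so $\beta_{k^\sharp}$ cannot be sustained at~$1$ and immediately starts decreasing; by~\eqref{eq:def-tilde-S} the effective fitness switches to $\widehat{S}$, and an open interval appears on which $\max_\ell\beta_\ell<1$. In this $o(K)$ regime, $\widehat{S}(0;k^\sharp\delta)=3-\tau>0$, so $\beta_0$ keeps growing, and some trait $\ell\delta<3$ eventually reaches $\beta_\ell=1$, giving the stated re-emergence. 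The principal obstacle is the bookkeeping in case~(b): verifying that at each trait-loss time the surviving configuration remains within the inductive regime, which ultimately relies on the translation symmetry of the fitness and the recursive structure of the mutation cascade.
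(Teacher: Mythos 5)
The overall architecture—establish the explicit piecewise-affine formulas for $\beta_\ell$ phase by phase, locate the minimum of $\beta_0$ at $s_{\widetilde k}$, and branch on the sign of $m_0$ and the comparison of $\bar k\delta$ with $3$—is the same as in the paper (which isolates the formula-derivation into a separate Lemma~\ref{lem:criterion-evol-suicide}). Your treatment of case~(a) is essentially the paper's: the closed form $\beta_\ell(t)=\beta_0(t-s_\ell)$ on the decreasing branch does give the ordering of re-emergence times. Your treatment of case~(c) also captures the right mechanism (once $\widehat k=\lceil 3/\delta\rceil$ is dominant, $\widehat S(\widehat k\delta;\widehat k\delta)<0$ pushes the maximal exponent below $1$, while $\widehat S(0;\cdot)=3-\tau>0$ keeps $\beta_0$ increasing as long as the population is $o(K)$), though you should make explicit that the re-emerging trait cannot be any $\ell\delta\geq 3$ because $\widehat S(\ell\delta;\ell\delta)<0$ prevents such a trait from sustaining exponent~$1$.

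Case~(b) has a genuine gap. You identify the correct difficulty but misdiagnose it as bookkeeping "ultimately rel[ying] on the translation symmetry of the fitness." That symmetry is exactly what fails. The shift identity $\beta_\ell(t)=\beta_0(t-s_\ell)$ is valid only while the dominant trait $k\delta$ stays below~$3$, i.e.\ while the system runs on the fitness $S(y;x)$, which depends on $y$ and $x$ only through $y-x$. Once the resident trait reaches $\widehat k\delta>3$ (at time $s_{\widehat k}$), the process is no longer carried by a population of size $\Theta(K)$, the governing fitness switches to $\widehat S(y;x)=3-y+\tau\,\mathrm{sign}(y-x)$, and this function is not translation invariant in $(x,y)$: the dominant trait's own exponent now has the strictly negative slope $3-\widehat k\delta$, and all slopes shift by $3-\ell^*_k\delta$ relative to the $S$-regime. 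Consequently, neither the inductive loss of traits nor the final extinction can be deduced by "iterating the same argument"; one has to do a fresh induction in the $\widehat S$-regime, as the paper does, tracking simultaneously that (i) the dominant exponent $\max_\ell\beta_\ell(s_{\widehat k+k})$ strictly decreases with $k$, (ii) traits $\ell\leq\widehat k+k-k^{**}$ are extinct by time $s_{\widehat k+k}$, and (iii) the process terminates when either $\widehat k+k=L$ or the dominant exponent drops below~$\alpha$, after which the whole population dies out. Without this separate analysis of the $o(K)$ regime, the claimed global extinction in~(b) is unsupported.
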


Biologically, Case (b) corresponds to evolutionary suicide. In Cases (a) and (c), very few individuals with small traits remain,
which are able to re-initiate a population of size of order $K$ (re-emergence) after the resident or dominant trait becomes too large. In these cases,
one can expect successive re-emergences. However, we don't know if there exists a limit cycle for the dynamics. Case (c) means that
the total population is $o(K)$ on some time interval, before re-emergence occurs after
populations with too large traits become small enough.\\

Heuristically, using the approximation that $\widetilde{k}\approx\tau/\delta$, we obtain that $m_0\approx 1-\frac{\alpha
  \tau}{2\delta}$. Hence, we have $m_0>0$ (re-emergence) provided $\tau \lesssim 2\delta/\alpha$ and extinction otherwise. Transfer
rates higher than $2\delta/\alpha$ favor extinction because the population is pushed to higher trait values. Small values of
$\delta$ or high values of $\alpha$ give more time for extinction of the small subpopulations. Note that, for $m_0>0$, the condition
$\bar{k}\delta<3$ is roughly $\tau<3/2$. Hence, for transfer rates smaller than $3/2$, 0 re-emerges first, while other traits can
re-emerge before 0 otherwise.

\section{Case of three traits}
\label{sec:3-traits}

Before proving our main results, let us illustrate the limit exponents $\beta(t)$ in the case of three traits. Let
us consider $\delta>0$ such that $2\delta<4 <3\delta$, so that the possible traits are $0$, $\delta$ and $2\delta$. A simulation is
shown step by step in Figure \ref{fig:exemple3traits}, which we will now explain.

\begin{figure}[!ht]
\begin{center}
\begin{tabular}{cc}
\includegraphics[width=7.5cm, height=4cm,angle=0]{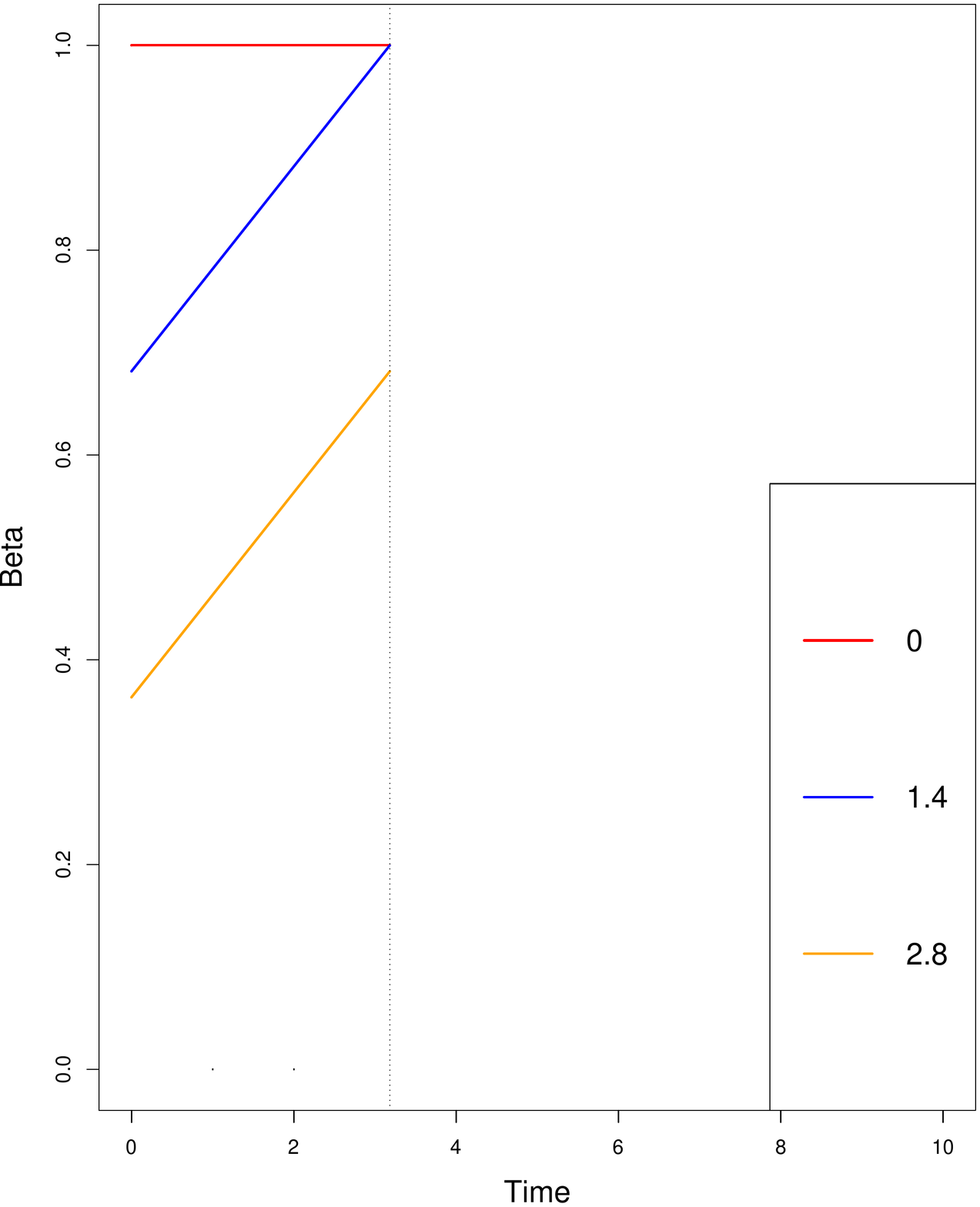} &
\includegraphics[width=7.5cm, height=4cm,angle=0]{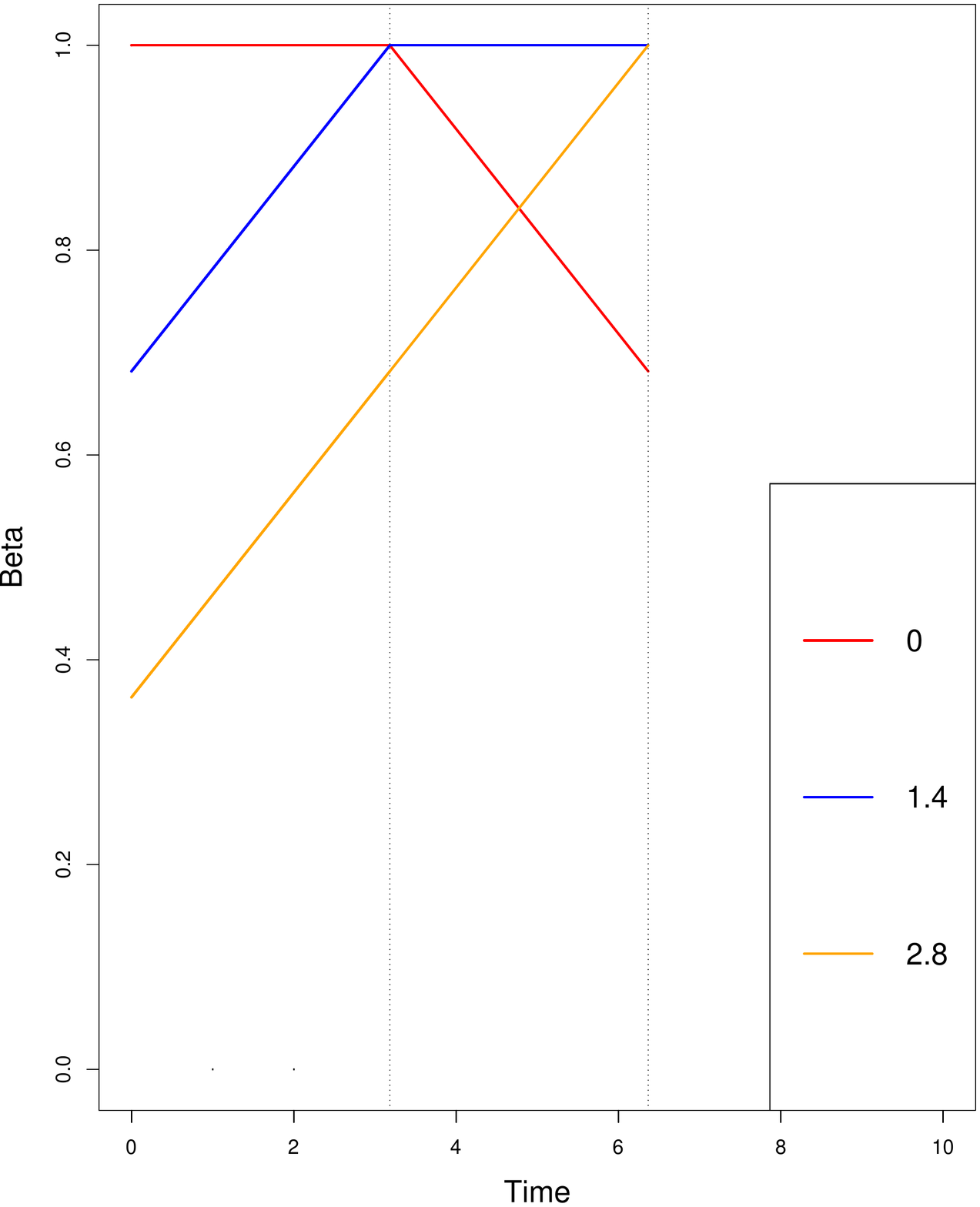} \\
(1) $t\in [0,s_1]$ & (2) $t\in[0,s_2]$ \\
\includegraphics[width=7.5cm, height=4cm,angle=0]{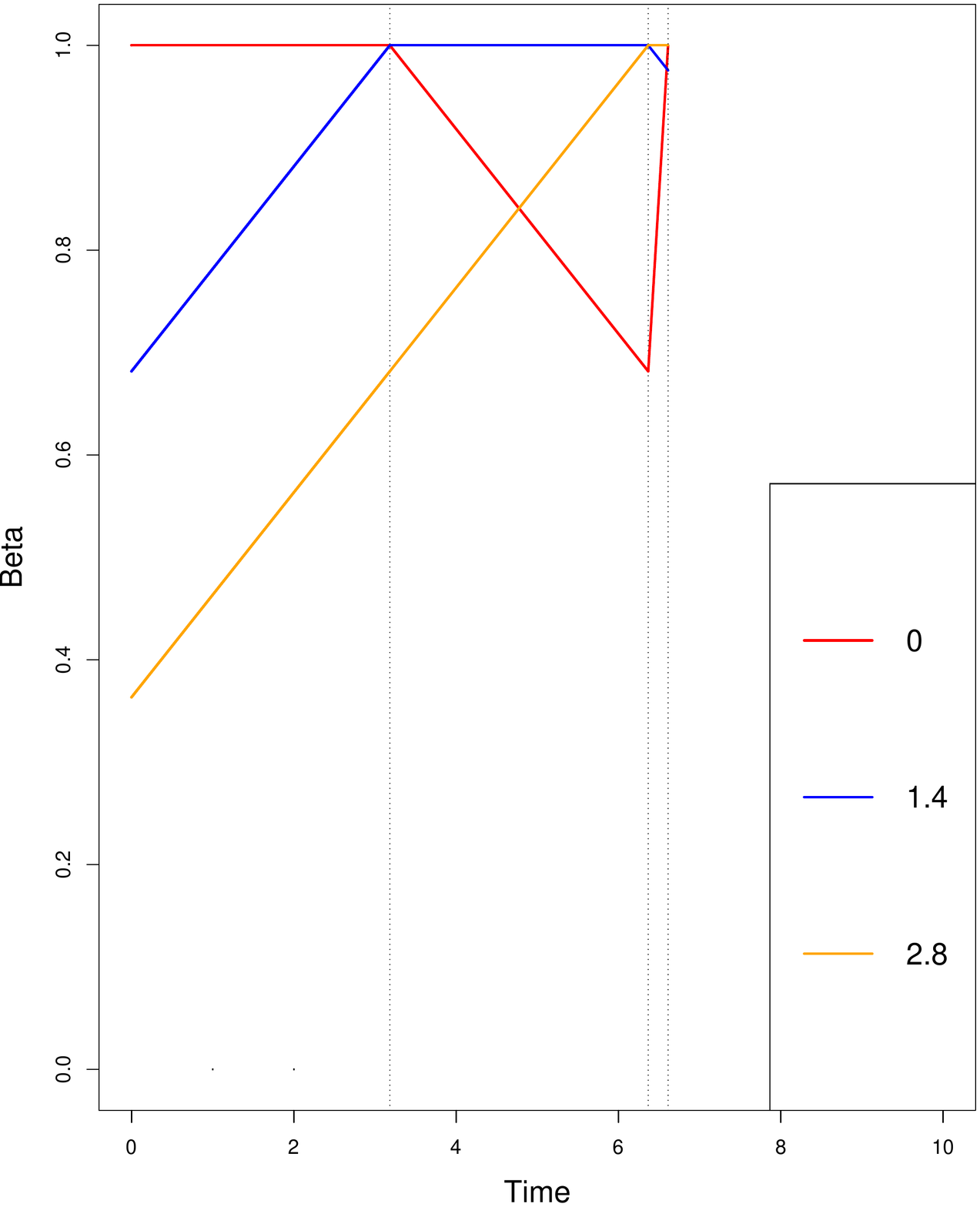} &
\includegraphics[width=7.5cm, height=4cm,angle=0]{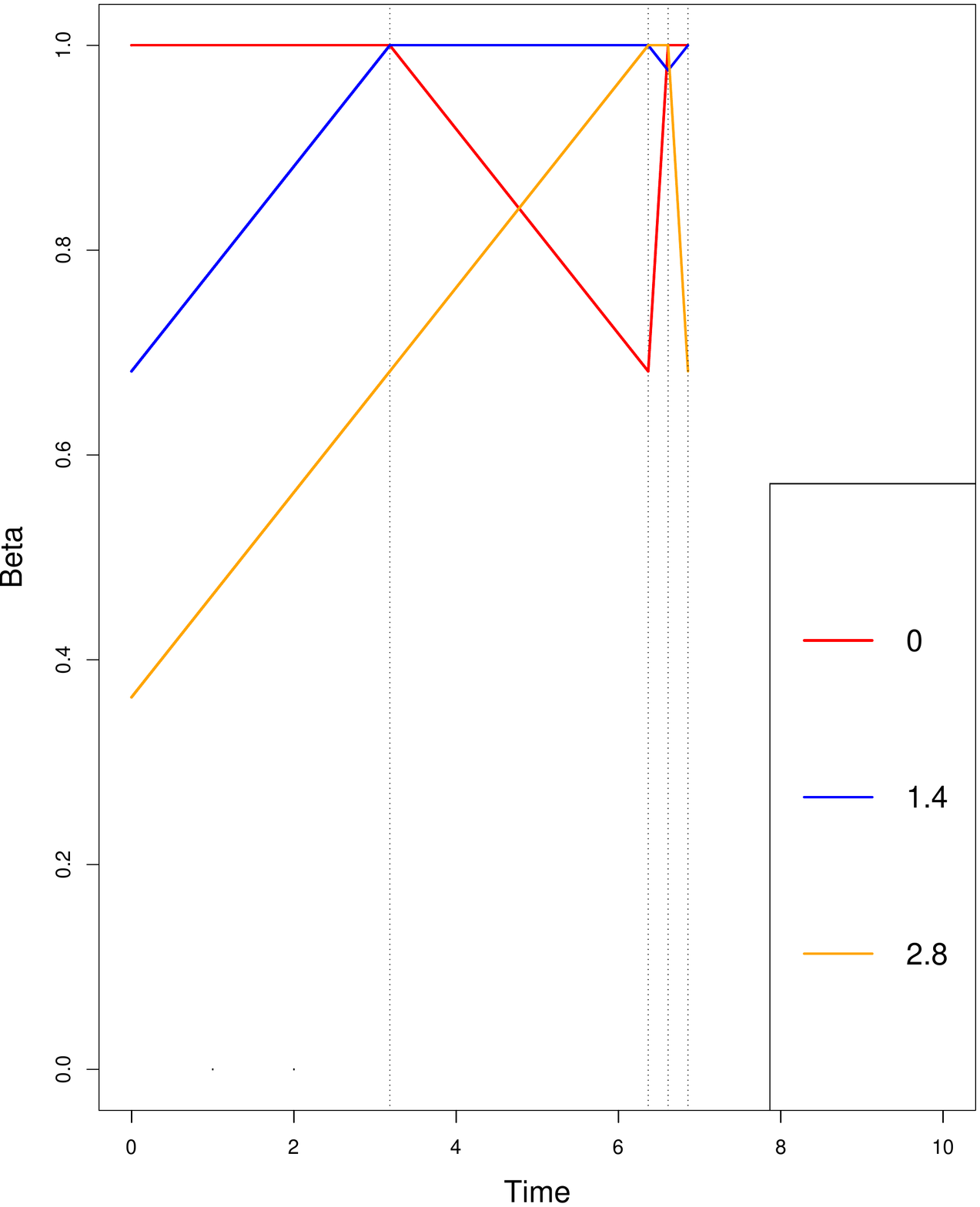} \\
(3) $t\in[0,s_3]$ & (4) $t\in[0,s_4]$ \\
\multicolumn{2}{c}{\includegraphics[width=13cm, height=6.5cm,angle=0]{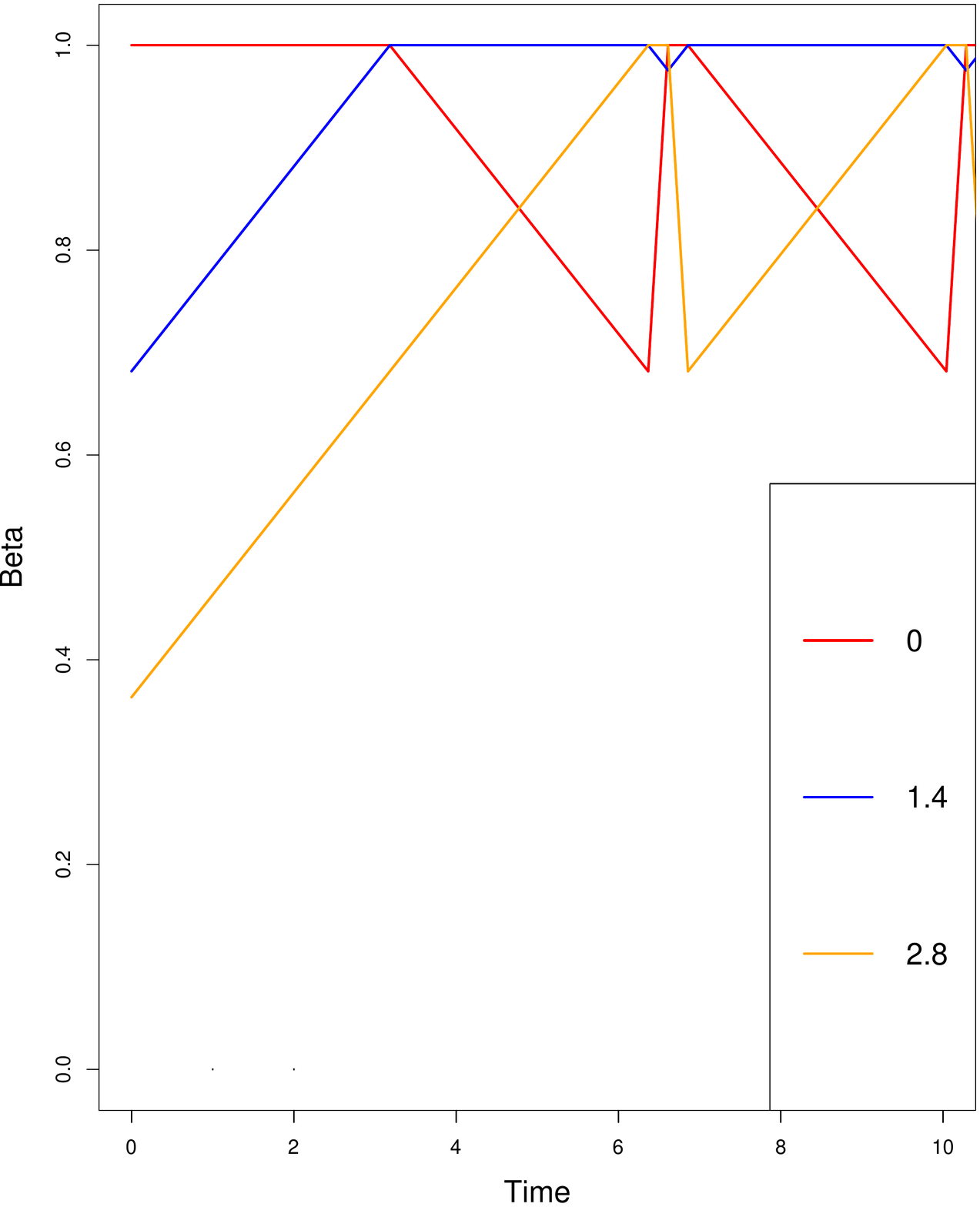}}
\\
\multicolumn{2}{c}{(5) $t\in[0,s_5]$}
\end{tabular}
\caption{{\small \textit{Construction step by step of the exponents $\beta_0(t)$, $\beta_1(t)$ and $\beta_2(t)$, as function of time
      in the case where $\delta < \tau <2\delta <3 <4 <3\delta$. Here, $\delta=1.4$, $\alpha=\frac{1}{\pi}$, $\tau=1.5$. The traits
      are $0$, $1.4$ and $2.8$. }}}\label{fig:exemple3traits}
\end{center}
\end{figure}

The initial condition is
\begin{equation}
\label{eq:beta-0}
\beta(0)=(\beta_0(0),\,\beta_1(0),\,\beta_2(0))=(1,1-\alpha,(1-2\alpha)\vee 0).
\end{equation}
The fitnesses given in~\eqref{eq:general-fitness} are
\[S(0; 0)=0,\qquad S(\delta ; 0)= \tau-\delta,\qquad S(2\delta ; 0)=\tau-2\delta.\]

\subsection{Case 1: $\tau<\delta$}
\label{sec:case-1}

In this case, neither the traits $\delta$ nor the trait $2\delta$ are advantageous and these populations survive only thanks to the
mutations from the trait 0 to $\delta$ and from the trait $\delta$ to $2\delta$. The exponents remain constant and $\forall t\geq 0,\
\beta(t)=\beta(0)$.

\subsection{Case 2: $\delta<\tau<2\delta$}
\label{sec:case-2}

Following Theorem~\ref{thm:transfer-main-new}, we shall decompose the dynamics of $\beta(t)$ into successive phases corresponding to
the time intervals $[s_{k-1},s_k]$.

\bigskip

\noindent \textbf{Phase 1: time interval $[0,s_1]$.} In this Case 2, $S(0;0)=0$, $S(\delta; 0)>0$ and $S(2\delta ; 0)<0$. While the
resident population remains the population with trait $0$, the population with trait $\delta$ has positive fitness and its growth is
described for $t\in[0,s_1]$ by the exponent:
\[\beta_1(t)=\big[(1-\alpha)+(\tau-\delta)t\big] \vee (1-\alpha)\vee 0=(1-\alpha)+(\tau-\delta)t.\]
The bracket corresponds to the intrinsic growth associated with the fitness $S(\delta ; 0)$, the term $1-\alpha$ is the contribution
of mutations from the population of trait $0$ and is here smaller than the term with the bracket.\\
The population with trait $2\delta$ has negative fitness and
\[\beta_2(t)=\big[(1-2\alpha)-(2\delta-\tau)t\big]\vee \big[(1-2\alpha)+(\tau-\delta)t\big]\vee 0=[(1-2\alpha)+(\tau-\delta)t]\vee 0.\]
As for the trait $\delta$, the first bracket corresponds to the intrinsic growth with a negative slope $\tau-2\delta<0$, while the second bracket corresponds to the contribution of mutations from the population with trait $\delta$.\\
It is clear that $\beta_2(t)<\beta_1(t)\leq 1$. Hence the first phase stops when $\beta_1(t)=1$, for
\[s_1=\frac{\alpha}{\tau-\delta}.\]
The first phase is illustrated in Fig. \ref{fig:exemple3traits}(1).
\medskip

\noindent \textbf{Phase 2: time interval $[s_1,s_2]$.} At time $s_1$, the populations with traits $0$ and $\delta$ both have sizes of order $K$: more precisely,
the exponents are
\[\beta_0(s_1)=1,\qquad \beta_1(s_1)=1,\qquad \beta_2(s_1)=1-\alpha.\]
Because $S(0 ; \delta)<0$ and $S(\delta ; 0)>0$, the new resident population with trait $\delta$ replaces the population with trait 0
whose exponent decreases after time $s_1$. The size of the population with trait $\delta$ remains close to $(3-\delta)K/C$, i.e.\ $\beta_1(t)=1$, during the
whole Phase 2, and using~\eqref{eq:general-fitness}:
\[S(0 ; \delta)=\delta-\tau<0,\qquad S(\delta ; \delta)=0,\qquad S(2\delta;\delta)=\tau-\delta>0.\]
Thus, the decrease of the population with trait $0$ is described by
$\ \beta_0(s_1+t)=[1-(\tau-\delta)\,t]\vee 0\ $
(recall that no mutant can have trait $0$).
The population with trait $2\delta$ has a positive fitness (first bracket in the following equation) and benefits from mutations
coming from the trait $\delta$ (second bracket):
\[\beta_2(s_1+t)=\big[(1-\alpha)+(\tau-\delta)\,t\big]\vee \big[1-\alpha\big]\vee 0=(1-\alpha)+(\tau-\delta)\,t.\]
This second phase stops when $\beta_2(t)=1$, at time
\[s_2=s_1+\frac{\alpha}{\tau-\delta}=\frac{2\alpha}{\tau-\delta}.\]
We check that
$\ \beta_0(s_1+t)=1-(\tau-\delta)\,t,\quad \forall t\in[s_2-s_1]$.
This phase is illustrated in Fig. \ref{fig:exemple3traits}(2).

\subsubsection{Case 2(a): $2\delta<3$}
\label{sec:case-2a}

In this case, trait $2\delta$ can survive on its own, i.e.\ its equilibrium population size $\frac{3-2\delta}{C}$ is positive.
\medskip

\noindent \textbf{Phase 3: time interval $[s_2,s_3]$.} Because $S(\delta ; 2\delta)<0$ and $S(2\delta ; \delta)>0$, we have at time
$s_2$ a replacement of the resident population with trait $\delta$ by the population with trait $2\delta$ which becomes the new
resident population, i.e.\ $\beta_2(t)=1$. At time $s_2$, the exponents are:
\begin{equation}
    \label{eq:phase3-fin-fin}
    \beta_0(s_2)=1-\alpha,\qquad \beta_1(s_2)=1,\qquad \beta_2(s_2)=1.
\end{equation}
The population size of trait $2\delta$ is close to $(3-2\delta)K/C$ so that the fitnesses are:
\[S(0 ; 2\delta)=2\delta-\tau>0,\qquad S(\delta ; 2\delta)=\delta-\tau<0,\qquad S(2\delta ; 2\delta)=0.\]
The population with trait $0$ increases with the exponent
$\ \beta_0(s_2+t)=(1-\alpha)+(2\delta-\tau)\,t$.

The trait $\delta$ has negative fitness but benefits from mutations coming from the trait $0$:
\begin{equation}
\beta_1(s_2+t)=\big[1-(\tau-\delta)\,t\big]\vee \big[(1-2\alpha)+(2\delta-\tau)\,t\big]\vee 0.\label{etape-3traits-1}
\end{equation}
This third phase is illustrated in Fig. \ref{fig:exemple3traits}(3). This phase stops when $\beta_0(t)=1$, i.e.\ at time
\[s_3=s_2+\frac{\alpha}{2\delta-\tau}=\frac{2\alpha}{\tau-\delta}+\frac{\alpha}{2\delta-\tau}.\]
We also have
$\
\beta_1(s_3)=\left(1-\alpha\,\frac{\tau-\delta}{2\delta-\tau}\right)\vee(1-\alpha)$.

\medskip

We have to distinguish two cases for Phase 4, depending on the value of $\beta_1(s_3)$.
\medskip

\noindent \textbf{Phase 4, case 2(a)(i): time interval $[s_3,s_4]$ under the assumption $\tau-\delta<2\delta-\tau$.} Then,
\[\beta_0(s_3)=1,\quad \beta_1(s_3)=1-\alpha\,\frac{\tau-\delta}{2\delta-\tau},\quad \beta_2(s_3)=1.\]
The new resident population is the one with trait $0$ and the fitnesses are the same as in Phase 1, but the initial conditions are
different. We obtain as above the exponents $\beta_0(s_3+t)=1$,
\begin{align*}\beta_1(s_3+t)= & \big[1-\frac{\tau-\delta}{2\delta-\tau}\alpha + (\tau-\delta)\,t\big] \vee \big[1-\alpha\big]\vee 0
=
1-\frac{\tau-\delta}{2\delta-\tau}\alpha + (\tau-\delta)\,t.\end{align*}
and
\[\beta_2(s_3+t)= \big[1-(2\delta-\tau)\,t\big]\vee \big[1-\frac{\delta\alpha}{2\delta-\tau} + (\tau-\delta)\,t\big]\vee 0 ,\]
as illustrated in Fig. \ref{fig:exemple3traits} (4). The phase stops when $\beta_1(t)=1$ at time
\[s_4=s_3+\frac{\alpha}{2\delta-\tau}=\frac{2\alpha}{\tau-\delta}+\frac{2\alpha}{2\delta-\tau}.\]
We check that
$\
\beta_2(s_3+t)= 1-(2\delta-\tau)\,t,\quad\forall t\leq s_4-s_3
\ $
and hence
\[\beta_0(s_4)=1,\qquad \beta_1(s_4)=1,\qquad \beta_2(s_4)=1-\alpha.\]
We recognize the initial condition of Phase 2. Therefore, the system behaves periodically (as in Figure~\ref{fig:simu2}(a))
starting from time $s_1$, with period
\[
s_4-s_1=\frac{\alpha}{\tau-\delta}+\frac{2\alpha}{2\delta-\tau}.
\]
\medskip

\noindent \textbf{Phase 4, case 2(a)(ii): time interval $[s_3,s_4]$ under the assumption $\tau-\delta>2\delta-\tau$.} In this case,
\[\beta_0(s_3)=1,\quad \beta_1(s_3)=1-\alpha,\quad \beta_2(s_3)=1.\]
In this case, we obtain $\beta_0(s_3+t)=1$, $\,\beta_1(s_3+t)=1-\alpha + (\tau-\delta)\,t$,
\[\beta_2(s_3+t)= 1-(2\delta-\tau)\,t\quad\text{and}\quad
s_4=s_3+\frac{\alpha}{\tau-\delta}=\frac{3\alpha}{\tau-\delta}+\frac{\alpha}{2\delta-\tau}.\]
\medskip

\noindent \textbf{Phase 5, case 2(a)(ii): time interval $[s_4,s_5]$.} We have
\[\beta_0(s_4)=1,\qquad \beta_1(s_4)=1,\qquad \beta_2(s_4)=1-\alpha\,\frac{2\delta-\tau}{\tau-\delta}.\]
Proceeding as above, we obtain
\[
s_5=s_4+\alpha\,\frac{2\delta-\tau}{(\tau-\delta)^2}=\frac{3\alpha}{\tau-\delta}+\frac{\alpha}{2\delta-\tau}+\alpha\,\frac{2\delta-\tau}{(\tau-\delta)^2}
\]
and for all $t\in[0,s_5-s_4]$, $\beta_1(s_4+t)=1$,
\[\beta_0(s_4+t)=1-(\tau-\delta)t\quad\text{and}\quad
\beta_2(s_4+t)=1-\alpha\,\frac{2\delta-\tau}{\tau-\delta}+(\tau-\delta)\,t.\]
\medskip

\noindent \textbf{Phase 6, case 2(a)(ii): time interval $[s_5,s_6]$.} We have
\[\beta_0(s_5)=1-\alpha\,\frac{2\delta-\tau}{\tau-\delta},\qquad \beta_1(s_5)=1,\qquad \beta_2(s_5)=1.\]
We obtain
\[
s_6=s_5+\frac{\alpha}{\tau-\delta}=\frac{4\alpha}{\tau-\delta}+\frac{\alpha}{2\delta-\tau}+\alpha\,\frac{2\delta-\tau}{(\tau-\delta)^2}
\]
and for all $t\in[0,s_6-s_5]$, $\beta_2(s_5+t)=1$,
\[\beta_0(s_5+t)=1-\alpha\,\frac{2\delta-\tau}{\tau-\delta}+(2\delta-\tau)\,t\quad\text{and}\quad
\beta_1(s_5+t)=1-(\tau-\delta)\,t.\]
Hence
$\ \beta_0(s_6)=1,\ \beta_1(s_6)=1-\alpha,\ \beta_2(s_6)=1$.
We recognize the initial condition as in Phase 4, case 2(a)(ii). Therefore, the system behaves periodically starting from time $s_3$,
with period
\[
s_6-s_3=\frac{2\alpha}{\tau-\delta}+\alpha\,\frac{2\delta-\tau}{(\tau-\delta)^2}.
\]

\subsubsection{Case 2(b): $2\delta>3$}
\label{sec:case-2b}

\noindent \textbf{Phase 3: time interval $[s_2,s_3]$.} In this case, the trait $2\delta$, which replaces the former resident trait
$\delta$ at time $s_2$, cannot survive alone and becomes dominant. So $\beta_2(s_2+t)$ does not remain equal to 1, but decreases with
slope $\widehat{S}(2\delta;2\delta)$. Recall that, at time $s_2$, the exponents are given by~\eqref{eq:phase3-fin-fin}.
The fitnesses now become:
\[\widehat{S}(0 ; 2\delta)=3-\tau,\qquad \widehat{S}(\delta ; 2\delta)=3-\delta-\tau<0,\qquad \widehat{S}(2\delta ; 2\delta)=3-2\delta<0.\]
Note that we do not distinguish yet on the sign of $\widehat{S}(0;2\delta)$, which may be either positive or negative in this case.
We obtain
\[
\beta_0(s_2+t)=[1-\alpha+(3-\tau)\,t]\vee 0,\quad
\beta_1(s_2+t)=[1-(\tau+\delta-3)\,t]\vee [1-2\alpha+(3-\tau)\,t]\vee 0
\]
and
$\
\beta_2(s_2+t)=1-(2\delta-3)\,t\ $
until either $\beta_0(t)=\beta_2(t)>0$, which corresponds to a change of dominant population with exponent smaller than 1, or
$\beta_2(t)=0$, which corresponds to the extinction of the whole population. Note that we cannot have $\beta_1(t)=\beta_2(t)$ before
$\beta_0(t)=\beta_2(t)$ since $\tau+\delta-3>2\delta-3$.
One can easily check that $\beta_2(t)$ hits 0 before crossing the curve $\beta_0(t)$ if and only if
$\frac{2\delta-\tau}{2\delta-3}<\alpha$. Of course, this cannot occur if $\tau<3$, since in this case $\widehat{S}(0;2\delta)>0$.
\medskip

\noindent \textbf{Phase 3, case 2(b)(i): $\frac{2\delta-\tau}{2\delta-3}<\alpha$.} In this case, the whole population gets extinct at time
\[
s_3=s_2+\frac{1}{2\delta-3}=\frac{2\alpha}{\tau-\delta}+\frac{1}{2\delta-3}.
\]
\medskip

\noindent \textbf{Phase 3, case 2(b)(ii): $\frac{2\delta-\tau}{2\delta-3}>\alpha$.} Trait 0 becomes dominant and
replaces trait $2\delta$ at time
\[
s_3=s_2+\frac{\alpha}{2\delta-\tau}=\frac{2\alpha}{\tau-\delta}+\frac{\alpha}{2\delta-\tau}.
\]
We obtain the exponents
$\
\beta_0(s_3)=\beta_2(s_3)=1-\alpha\,\frac{2\delta-3}{2\delta-\tau}\in(0,1)
\ $
and
\[
\beta_1(s_3)=\left[1-\alpha\,\frac{\delta+\tau-3}{2\delta-\tau}\right]\vee\left[
  1-\alpha\,\frac{4\delta-\tau-3}{2\delta-\tau}\right]\vee 0.
\]
\medskip

\noindent\textbf{Phase 4, case 2(b)(ii).}
We obtain the new fitnesses
\[
\widehat{S}(0 ; 0)=3,\qquad \widehat{S}(\delta ; 0)=3+\tau-\delta>3,\qquad S(2\delta ; 0)=3-2\delta+\tau\in(0,3).
\]
To compute $\beta_1(s_3+t)$, one needs to distinguish whether $\beta_1(s_3)>0$ or $\beta_1(s_3)=0$. In the last case, one needs to
wait until $\beta_0(s_3+t)=\alpha$ before $\beta_1$ starts to increase with slope $3+\tau-\delta$. The phase stops either when $\beta_0(s_3+t)=1$ (re-emergence of trait $0$, which becomes resident once again) or
$\beta_0(t)=\beta_1(t)<1$ (change of dominant trait). A delicate case study shows that the first case occurs if $\tau-\delta<3/2$ when $\beta_1(s_3)>0$, or if $\tau-\delta<3\alpha/(1-\alpha)$ when $\beta_1(s_3)=0$,
and the second case if $\tau-\delta>3/2$ when $\beta_1(s_3)>0$, or if $\tau-\delta>3\alpha/(1-\alpha)$ when $\beta_1(s_3)=0$. In the first case, one needs to proceed with similar computations as in the first phases.
In the second case, either trait $\delta$ re-emerges first (i.e.\ becomes resident again), or trait $2\delta$ becomes dominant once
again. Explicit computations of the subsequent dynamics are very lengthy. This case is illustrated by Figure~\ref{fig:simu2}(b).

\subsection{Case 3: $2\delta<\tau$}
\label{sec:case-3}

We proceed similarly as in Case 2.
\medskip

\noindent\textbf{Phase 1: time interval $[0,s_1]$.} This phase is the same as in Case 2. The resident trait is 0 and the fitnesses of
the three traits are given by
\[
S(0;0)=0,\quad S(\delta;0)=\tau-\delta>0,\quad S(2\delta;0)=\tau-2\delta>0.
\]
We obtain, for all $t\in [0,s_1]$, $\beta_0(t)=1$,
\[
\beta_1(t)=1-\alpha+(\tau-\delta)\,t
\quad\text{and}\quad
\beta_2(t)=1-2\alpha+(\tau-\delta)\,t
\]
where $\ s_1=\frac{\alpha}{\tau-\delta}$.
Thus $\ \beta_0(s_1)=\beta_1(s_1)=1$ and $\beta_2(s_1)=1-\alpha$.
\medskip

\noindent\textbf{Phase 2: time interval $[s_1,s_2]$.} This phase is also the same as in Case 2. The resident trait is $\delta$ and the
fitnesses are given by
\[
S(0;\delta)=-(\tau-\delta)<0,\quad S(\delta;\delta)=0,\quad S(2\delta;\delta)=\tau-\delta>0.
\]
We obtain, for all $t\in [0,s_2-s_1]$, $\beta_1(s_1+t)=1$,
\[
\beta_0(s_1+t)=1-(\tau-\delta)\,t
\quad\text{and}\quad
\beta_2(s_1+t)=1-\alpha+(\tau-\delta)\,t,
\]
where $\
s_2=s_1+\frac{\alpha}{\tau-\delta}$.
Thus $\ \beta_1(s_2)=\beta_2(s_2)=1$ and $\beta_0(s_2)=1-\alpha$.
\medskip

Again, we have to separate the cases $2\delta<3$ and $2\delta>3$.

\subsubsection{Case 3(a): $2\delta<3$}
\label{sec:case-3a}

\noindent\textbf{Phase 3: time interval $[s_2,+\infty)$.} The resident trait is $2\delta$ and the
fitnesses are given by
\[
S(0;2\delta)=-(\tau-2\delta)<0,\quad S(\delta;2\delta)=-(\tau-\delta)<0,\quad S(2\delta;2\delta)=0.
\]
We obtain, for all $t\geq 0$, $\beta_2(s_2+t)=1$, $\
\beta_0(s_2+t)=[1-\alpha-(\tau-2\delta)\,t]\vee 0\ $
and
\[
\beta_1(s_2+t)=[1-(\tau-\delta)\,t]\vee [1-2\alpha-(\tau-2\delta)\,t]\vee 0.
\]
Therefore, $2\delta$ remains the resident trait forever.

\subsubsection{Case 3(b): $2\delta>3$}
\label{sec:case-3b}

\noindent\textbf{Time interval $[s_2,+\infty)$.} The resident trait $2\delta$
cannot survive by itself. Hence, the fitnesses are now given by
\[
\widehat{S}(0;2\delta)=3-\tau<0,\quad \widehat{S}(\delta;2\delta)=3-\delta-\tau<0,\quad \widehat{S}(2\delta;2\delta)=3-2\delta<0.
\]
Since $\widehat{S}(0;2\delta)<\widehat{S}(2\delta;2\delta)$ and $\widehat{S}(\delta;2\delta)<\widehat{S}(2\delta;2\delta)$, Phase 3
will end when $\beta_2(t)$ hits 0, i.e.\ when the population gets extinct. Hence, for all $t\geq 0$,
$\beta_0(s_2+t)=[1-\alpha+(3-\tau)\,t]\vee 0$, $\beta_1(s_2+t)=[1-(\tau+\delta-3)\,t]\vee [1-2\alpha+(3-\tau)\,t]\vee 0\ $
and
$\  \beta_2(s_2+t)=[1-(2\delta-3)\,t]\vee 0$,
and the extinction time is
$\
s_3=s_2+\frac{1}{2\delta-3}$.

\section{Proof of Theorem \ref{thm:transfer-main-new}}
\label{sec:proof-main}

\subsection{Main ideas of the proof}\label{sec:mainidea}

Let $T>0$ be fixed during the whole proof. We start from the stochastic birth and death process with mutation, competition and transfer, $(N^K_0(t),\ldots,N^K_L(t))$. Our goal is to study the limit behaviour of the vector $(\beta^K_0(t),\ldots,\beta^K_L(t))$ defined in~\eqref{def:beta}.

Theorem \ref{thm:transfer-main-new} will be obtained by a fine comparison of the size of each subpopulation defined by a given trait
value with carefully chosen branching processes with immigration. The stochastic dynamics consists in a succession of steps, composed
of long phases $[\sigma^K_k\log K,\theta^K_k\log K]$ for $k\geq 1$ (with $\sigma_1^K=0$) followed by short intermediate phases
$[\theta^K_k\log K,\sigma^K_{k+1}\log K]$. In each long phase, there is a single dominant or resident trait. Short intermediate
phases correspond to the replacement of the resident or dominant trait, where two subpopulations are of maximal order. We will prove
that $\theta^K_k$ converges in probability to $s_k$, $k\geq 1$. In the limit, intermediate steps vanish on the time scale $\log K$.
The proof will proceed by induction on $k$ until the occurrence of three particular events at some step $k_0$: the
  exponents of three traits become maximal simultaneously (case (ii)(a) in Theorem~\ref{thm:transfer-main-new}), extinction (case
  (ii)(b)), or the exponent of some trait vanishes at the same time as a change of resident or dominant population (case (ii)(c)). We
  then stop the induction and set $T_0=s_{k_0}$ in cases (a) and (c) or $T_0=+\infty$ in case (b).

Four cases need to be distinguished for the inductive definition of $\sigma_k^K$ and $\theta^K_k$, depending on whether there is a dominant or resident trait at the beginning and the end of each step. When there is a resident (resp. dominant) trait during step $k$, the stopping time $\theta_k$ will be defined as the first time when its size exits a neighborhood of its equilibrium density (resp. its exponent exits a neighborhood of its limit), or when the other subpopulations stop to be negligible with respect to the resident (resp. dominant) subpopulation size. To quantify the latter condition, we introduce a parameter $m>0$ which will be fixed during the proof (see \eqref{teta0}, \eqref{teta0bis}, \eqref{teta0chapeau} and Remark~\ref{rem:m-universel}). When the next trait with higher exponent is resident (resp. dominant), the stopping time $\sigma^K_{k+1}$ converges to $s_{k}$ (resp. to $s_k+s$ for a small parameter $s>0$). The proof will be completed by letting $s$ converge to 0.

To control the exponents $\beta^K_\ell(t)$, we proceed by a double induction, first on the steps, and second, inside each step, on the traits $\ell\delta$, for $\ell=0$ to $\ell=L$.  The exponents are approximately piecewise affine. Changes of slopes may happen when a new trait emerges, when a trait dies or when the dynamics of a trait becomes driven by incoming mutations. We use asymptotic results on branching processes with immigration detailed in Appendix~\ref{sec:BPI} to control the sizes of the non-dominant subpopulations. The main result used for phases inside steps is Theorem~\ref{thm:BPI-general}.
 During intermediate phases, we use comparisons with dynamical systems, see Lemmas~\ref{lem:competitionTBDI} and~\ref{lem:competition}.\\

\subsection{Step 1}
\label{sec:phase-1}

Let us begin the induction on the steps and start with Phase 1, corresponding to the time interval $[0,\theta^K_1\log K]$. During this phase, the trait $\ell^*_1 \delta=0$ is resident. We introduce a parameter $\varepsilon_1>0$ and we choose $K$ large enough so that
$N^K_0(0)\in \Big[\big(\frac{3}{C}-\varepsilon_1\big)K,\big(\frac{3}{C}+\varepsilon_1\big)K\Big]$. We define
\begin{multline}
\theta_1^K=\inf\left\{t\geq 0 : N^K_{0}(t \log K)\not\in \Big[\big(\frac{3}{C}-3\varepsilon_{1}\big)K,\big(\frac{3}{C}+3\varepsilon_{1}\big)K\Big]\right.\\
\left.\text{ or }\sum_{\ell\neq 0} N^K_\ell(t \log K)\geq m\varepsilon_{1} K\right\}.\label{teta0}
\end{multline}
For the chosen initial condition \eqref{eq:transfer-init-cond},
$\beta_\ell(0)=(1-\ell\alpha)_+$. We distinguish two cases: either $\tau<\delta$ or $\delta<\tau$.

\subsubsection{Case $\tau<\delta$: a single phase}
\label{sec:phase-1-case-1}

Using formulas \eqref{eq:evol-beta_0} and \eqref{eq:evol-beta_j} recursively, we obtain
\[
\begin{array}{ll}
 \beta_0(t)= 1 & t_{0,1}=0\\
\beta_1(t)=  \big(1-\alpha+(\tau-\delta)t\big)\vee (1-\alpha) \vee 0 = 1-\alpha, & t_{1,1}=0\\
\quad \vdots & \quad \vdots \\
\beta_\ell(t)=  \big(1-\ell\alpha+(\tau-\ell\delta)t\big)\vee (1-\ell\alpha) \vee 0 = (1-\ell\alpha)_+, & t_{\ell,1}=0,
\end{array}
\]where we recall that the times $t_{1,1},\cdots t_{\ell,1}$ have been defined in \eqref{eq:def-t-ell-k}.
From \eqref{eq:rec-sk}, we obtain $s_1=+\infty$, and $\beta(t)$ is constant. This is due to the fact that all non-resident traits have negative fitnesses and their size is kept of constant order due to mutations from the resident trait 0.

\bigskip

In the sequel, we denote by $BP_K(b,d,\beta)$ the
distribution of the branching process, with individual birth rate $b\geq 0$, individual death rate $d\geq 0$ and initial
value $\lfloor K^\beta-1 \rfloor \in\N$. We refer to Appendix~\ref{sec:linear-BDP}
for the classical properties of $BP_K(b,d,\beta)$ which will be used to obtain the following results. \\
We also denote by $BPI_K(b,d,a,c,\beta)$ the distribution of a branching process with immigration, with individual birth rate $b\geq 0$, individual death rate $d\geq 0$, immigration rate $K^{c}e^{as}$  at time $s\geq 0$ with $a,c\in\RR$ and same initial value. We refer to Appendix \ref{sec:BPI}.\\
Similarly, $LBDI_K(b,d,C,\gamma)$ is the distribution of a one-dimensional logistic birth and death process (see
Appendix~\ref{sec:logistic-BDP}) with individual birth rate $b\geq 0$ and individual death rate $d+Cn/K$
when the population size is $n$ and with immigration at predictable rate $\gamma(t)\geq 0$ at time $t$.\\

In order to couple the population with branching processes with immigration, we start with computing the arrival and death rates in the subpopulation of trait $\ell \delta$, for $\ell\geq 0$.
For $K$ large enough, arrivals in this population due to reproduction of trait $\ell\delta$ or transfer occur
at time $t\leq\theta_1^K \wedge T$
at rate $N^K_\ell(t \log K)\left[(4-\ell\delta)(1-K^{-\alpha})+\tau\ \frac{\sum_{\ell'=0}^{\ell-1}N^K_{\ell'}(t \log K)}{\sum_{\ell'=0}^L N^K_{\ell'}(t \log K)}\right]$ satisfying
\begin{multline*}
N^K_\ell(t \log K)\left[4-\ell\delta-\varepsilon_1
+\tau\frac{3-3C\varepsilon_1}{3+C(3+m)\varepsilon_1}\right]\\
\leq N^K_\ell(t\log K)\left[(4-\ell\delta)(1-K^{-\alpha})+\tau\ \frac{\sum_{\ell'=0}^{\ell-1}N^K_{\ell'}(t \log K)}{\sum_{\ell'=0}^L N^K_{\ell'}(t \log K)}\right]
 \leq N^K_\ell(t \log K)\left[4-\ell\delta+\tau\right].
\end{multline*}
Arrivals due to incoming mutations from trait $(\ell-1)\delta$ occur at time $t\leq \theta^K_1\wedge T$ at rate
$N_{\ell-1}^K(t\log K)(4-(\ell-1)\delta)K^{-\alpha}$.\\
Deaths occur at rate $N^K_\ell(t \log K)\left[1+\frac{C}{K}\sum_{\ell'=0}^L N^K_{\ell'}(t \log K)\right]$ satisfying
\[
N^K_\ell(t \log K)\left(4-3C\varepsilon_1\right)\leq N^K_\ell(t \log K)\left[1+\frac{C}{K}\sum_{\ell'=0}^L N^K_{\ell'}(t \log K)\right]\leq N^K_\ell(t \log K)\left(4+C(3+m)\varepsilon_1\right).
\]

\bigskip

\noindent \textbf{Step 1} Let us prove by induction on $\ell\geq 0$ the following bounds on the mutation rates: for all $t\leq \theta^K_1\wedge T$,
with probability converging to 1 as $K\rightarrow+\infty$,
\begin{equation}
\label{eq:control-br}
K^{\beta_{\ell}(t)-\alpha-(\ell+1)\varepsilon_{1}}\leq N^K_{\ell}(t \log K)(4-\ell\delta) K^{-\alpha}  \leq
K^{\beta_{\ell}(t) -\alpha +(\ell+1)\varepsilon_{1}}.
\end{equation}

For $\ell=0$, by definition of $\theta_1^K$,
\begin{equation}
\label{eq:intial-bound}
K^{1-\alpha-\varepsilon_{1}} \leq N^K_{0}(t \log K)\,4 K^{-\alpha} \leq K^{1-\alpha+\varepsilon_{1}}
\end{equation}
is clear. To proceed to $\ell=1$, we use standard coupling arguments to obtain
\begin{equation}
\label{eq:control-dr}
Z^K_{1,1}(t\log K)\leq N^K_1(t \log K)\leq Z^K_{1,2}(t\log K),\quad\forall t\leq\theta^K_1 \wedge T,
\end{equation}
where $Z^K_{1,1}$ is a $BPI_K(4-\delta+\tau-\overline{C}\varepsilon_1,4+\overline{C}\varepsilon_1,0, 1-\alpha-\varepsilon_{1},1-\alpha-\varepsilon_{1})$
and $Z^K_{1,2}$ is a $BPI_K(4-\delta+\tau,4-\overline{C}\varepsilon_1,0, 1-\alpha+\varepsilon_{1},1-\alpha +\varepsilon_{1})$, with
\[
\overline{C}=1+(1\vee \tau)C(6+m).
\]
Note that the addition of $\varepsilon_{1}$ in the coefficient $\beta$ of the upper-bounding branching
process ensures that $c\leq \beta$, so that we can use Theorem~\ref{thm:BPI-general}(i) to deduce that as $K$ tends to infinity, since $\tau-\delta<0$,
\[
\frac{\log(1+Z^K_{1,1}(t\log K))}{\log K}\longrightarrow{}\left[1-\alpha-\varepsilon_{1}+(\tau-\delta-2\overline{C}\varepsilon_1)t\right]\vee
[1-\alpha-\varepsilon_{1}]\geq 1-\alpha-\varepsilon_{1}=\beta_1(t)-\varepsilon_1
\]
and
\[
\frac{\log(1+Z^K_{1,2}(t\log K))}{{\log
    K}}\longrightarrow{}\left[1-\alpha+\varepsilon_{1}+(\tau-\delta+\overline{C}\varepsilon_1)t\right]\vee
[1-\alpha+\varepsilon_1]\leq 1-\alpha+\varepsilon_{1}=\beta_1(t)+\varepsilon_{1},
\]
in $L^\infty([0,T])$ and provided that $\varepsilon_1$ is small enough.\\

Assume now that the induction hypothesis \eqref{eq:control-br} is true for $\ell-1\geq 1$ and let us prove it for $\ell$. Either $\ell\leq 1+\lfloor 1/\alpha\rfloor $ and we have for all $t\leq \theta^K_1 \wedge T$, with probability converging to 1 as $K\rightarrow+\infty$,
\begin{equation*}
K^{1-(\ell-1)\alpha -\alpha-\ell\varepsilon_{1}}\leq N^K_{\ell-1}(t \log K)(4-(\ell-1)\delta) K^{-\alpha}  \leq
K^{1-(\ell-1)\alpha -\alpha +\ell\varepsilon_{1}},
\end{equation*}
or $\ell>1+\lfloor 1/\alpha\rfloor $ and $N^K_{\ell-1}(t \log K)(4-(\ell-1)\delta) K^{-\alpha} =0$.
Hence, with probability converging to 1,
\begin{equation}
\label{eq:control-dr-2}
Z^K_{\ell,1}(t\log K)\leq N^K_\ell(t \log K)\leq Z^K_{\ell,2}(t\log K),\quad\forall t\leq\theta^K_1 \wedge T,
\end{equation}
where $Z^K_{\ell,1}$ is a $BPI_K(4-\ell\delta+\tau-\overline{C}\varepsilon_1,4+\overline{C}\varepsilon_1,0,
(1-(\ell-1)\alpha)_+-\alpha-\ell\varepsilon_{1},(1-\ell\alpha-\ell\varepsilon_{1})_+)$ and $Z^K_{\ell,2}$ is a
$BPI_K(4-\ell\delta+\tau,4-\overline{C}\varepsilon_1,0, (1-(\ell-1)\alpha)_+-\alpha+\ell\varepsilon_{1},(1-\ell\alpha+\ell\varepsilon_{1})_+
)$. Note that when $\ell>1+\lfloor 1/\alpha\rfloor$, the lower bound has nonzero but negligible immigration, hence the
comparison is true only on the event where there is no immigration in $Z^K_{\ell,1}$ on $[0,T\log K]$, which has probability converging to 1 (see Lemma~\ref{lem:non-emergence}).

\bigskip

If $\ell\leq\lfloor 1/\alpha\rfloor$, we use Theorem~\ref{thm:BPI-general}~(i) to deduce that as $K$ tends to infinity, uniformly for $t\in [0,T]$,
\[
\frac{\log(1+Z^K_{\ell,1}(t\log K))}{\log K}\longrightarrow{}\left[1-\ell\alpha-\ell\varepsilon_{1}+(\tau-\ell\delta-2\overline{C}\varepsilon_1)t\right]\vee [1-\ell\alpha-\ell\varepsilon_{1}]\geq \beta_\ell(t)-\ell\varepsilon_{1},
\]
and
\[
\frac{\log(1+Z^K_{\ell,2}(t\log K))}{{\log K}}\longrightarrow{}\left[1-\ell\alpha+\ell\varepsilon_{1}+(\tau-\ell\delta+\overline{C}\varepsilon_1)t\right]\vee [1-\ell\alpha+\ell\varepsilon_{1}]\leq \beta_\ell(t)+\ell\varepsilon_{1}.
\]

If $\lfloor 1/\alpha\rfloor+1\leq \ell\leq L$, we use Theorem~\ref{thm:BPI-general}~(iii) and \eqref{eq:BPI-extinction} (assuming that $\varepsilon_{1}$ is small
enough) to prove that $\log(1+Z^K_{\ell,i}(t\log K))/\log K$ converges to $0$ in $L^\infty([0,T])$, and $N^K_\ell(t\log K)=0$ for all $t\leq \theta^K_1 \wedge T$ with probability converging to 1.

\bigskip

We deduce that, with probability
converging to 1,~\eqref{eq:control-br} is satisfied with $\ell-1$ replaced by $\ell$. This completes the proof
of~\eqref{eq:control-br} by induction. In particular, on the time interval $[0,\theta^K_1\wedge T]$, $\log(1+N^K_{\ell}(t\log K))/\log K$
converges to $\beta_\ell(t)=(1-\ell\alpha)_+$.

\bigskip

\noindent \textbf{Step 2} It remains to prove that $\theta_1^K> T$ with probability converging to 1. Using the previous steps and recalling that $\beta_\ell(t)=\beta_\ell(0)=(1-\ell\alpha)_+$, we have,
with probability converging to 1, that $\sup_{t\in[0,2T]}|\beta^K_\ell(t)-\beta_\ell(t)|<\alpha/2$, and thus, for all $t\leq \theta_1^K\wedge 2T$,
\begin{equation}
  \label{eq:damgan-midi}
  \sum_{\ell=1}^L N^K_\ell(t\log K)\leq  K^{\max_{1\leq\ell\leq L,\,t\in[0,2T]} \beta_\ell(t)+\frac{\alpha}{2}}=K^{1-\frac{\alpha}{2}}.
\end{equation}
Hence, we have for all $t\leq\theta^K_1 \wedge 2T$ that
\begin{equation}
\label{eq:control-dr-3}
Z^K_{0,1}(t\log K)\leq N^K_0(t\log K)\leq Z^K_{0,2}(t\log K),
\end{equation}
where $Z^K_{0,1}$ is a $LBDI_K(4(1-K^{-\alpha}),1+CK^{-\alpha/2},C,0)$ and $Z^K_{0,2}$ is a $LBDI_K(4,1,C,0)$. Applying Lemma~\ref{lem:sable-popu}(i) to the processes $Z^K_{0,i}$, $i=1,2$, we deduce that
$\,\theta_1^K> T\,$ with probability converging to 1 when $K\rightarrow +\infty$.

\subsubsection{Case $\tau>\delta$: Phase 1}
\label{sec:phase-1-case-2}

Let us first give the explicit expression of $\beta_\ell$ on the first phase. We shall use the two equivalent formulations
\eqref{eq:evol-beta_j} and \eqref{eq:evol-beta_j-simple}. The fitnesses involved in these expressions are $\ S(0;0)= 0,\  S(\ell \delta;0)= \tau- \ell \delta$, for all $1\leq  \ell\leq L$.
We use formula \eqref{eq:evol-beta_j-simple} recursively from $\ell=1$ to $L$ to prove that, for $\ell\leq \lfloor \frac{1}{\alpha}\rfloor$
\[
\beta_\ell(t)=  \big(1-\ell\alpha+(\tau-\ell\delta)t\big)\vee \big(1-\ell\alpha+(\tau-\delta)t\big) \vee 0 = 1-\ell\alpha+ (\tau-\delta)t,
\]
and $t_{\ell,1}=0$ when $\ell<\lfloor 1/\alpha\rfloor$. \\

When $\ell=\lfloor \frac{1}{\alpha}\rfloor$, we have $\beta_{\lfloor
  1/\alpha\rfloor}(0)\in (0,\alpha)$ and we deduce that $t_{\lfloor 1/\alpha\rfloor,1}={\frac{\alpha(1+\lfloor 1/\alpha\rfloor)-1}{\tau-\delta}}>0$.\\

For $\ell=\lfloor \frac{1}{\alpha}\rfloor+1$, we use~\eqref{eq:evol-beta_j} to prove
\begin{align*}
\beta_{\lfloor 1/\alpha\rfloor+1}(t)=  & \Big((\tau-(\lfloor \frac{1}{\alpha}\rfloor +1)\delta)(t-t_{\lfloor 1/\alpha\rfloor,1})\Big)\vee \Big(1-(\lfloor \frac{1}{\alpha}\rfloor +1)\alpha+(\tau-\delta)t\Big) \vee 0\\
 = &  \Big(1-(\lfloor \frac{1}{\alpha}\rfloor+1)\alpha+ (\tau-\delta)t\Big)_+.
\end{align*}
Similar computation gives the general formula for all $\ell\in \{1,\dots L\}$:
\begin{align}
  \label{eq:Damgan-fin}
  &\beta_\ell(t)=\big(1-\ell\alpha +(\tau-\delta)t\big)_+\ ;
  & t_{\ell,1}=
  \begin{cases}
    0 & \mbox{ if }\ell<\lfloor 1/\alpha\rfloor\\
    \frac{\alpha(\ell+1)-1}{\tau-\delta} & \mbox{ otherwise}.
  \end{cases}
\end{align}
We see that $\beta_1(t)$ is the first exponent to reach 1, so that the duration of the first phase is
\[s_1=\frac{\alpha}{\tau-\delta}.\]
All the exponents $\beta_\ell$ are affine functions on $[0,s_1]$, except for $\ell=\lfloor 1/\alpha\rfloor+1$ where there is a change
of slope at time $t_{\lfloor 1/\alpha\rfloor,1}$.

\medskip


The comparisons of arrival and death rates of $N_\ell^K(t\log K)$ are the
same as in Section \ref{sec:phase-1-case-1}. We proceed by induction over $\ell\geq 0$ as above.
\medskip

For $\ell=0$,~\eqref{eq:intial-bound} remains true by definition of $\theta^K_1$. To proceed to $\ell=1$, we observe that~\eqref{eq:control-dr} remains valid and Theorem~\ref{thm:BPI-general}(i) gives as before that as $K$ tends
to infinity, since $\tau-\delta>0$,
\[
\frac{\log(1+Z^K_{1,1}(t\log K))}{\log K}\longrightarrow{}\left[1-\alpha-\varepsilon_{1}+(\tau-\delta-2\overline{C}\varepsilon_1)t\right]\vee
[1-\alpha-\varepsilon_{1}]\geq \beta_1(t)-(1+2\overline{C} T)\varepsilon_1,
\]
and
\[
\frac{\log(1+Z^K_{1,2}(t\log K))}{{\log
    K}}\longrightarrow{}\left[1-\alpha+\varepsilon_{1}+(\tau-\delta+\overline{C}\varepsilon_1)t\right]\vee
[1-\alpha+\varepsilon_1]\leq \beta_1(t)+\varepsilon_{1}(1+\overline{C}T),
\]
in $L^\infty([0,s_1\wedge T])$ and provided that $\varepsilon_1$ is small enough.\\
\medskip

For $2\leq \ell\leq 1+\lfloor 1/\alpha\rfloor$, the induction relation \eqref{eq:control-br} is modified as follows. For
all $\ell\geq 1$, let
\begin{equation}
  \label{eq:constant-horrible}
  C_\ell=\ell+2\overline{C}T.
\end{equation}
We assume that, for all $t\leq\theta^K_1\wedge s_1\wedge T$,
\begin{multline}
\beta_{\ell-1}(t)-\alpha-C_{\ell-1} \varepsilon_1
\leq \frac{\log(N^K_{\ell-1}(t \log K)(4-(\ell-1)\delta) K^{-\alpha})}{\log K}
 \leq \beta_{\ell-1}(t)-\alpha +C_{\ell-1} \varepsilon_{1}.
\label{eq:control-br-cas2}
\end{multline}
Hence, with probability converging to 1,
\begin{equation}
\label{eq:control-dr-bis}
Z^K_{\ell,1}(t\log K)\leq N^K_\ell(t \log K)\leq Z^K_{\ell,2}(t\log K),\quad\forall t\leq\theta^K_1\wedge s_1\wedge T,
\end{equation}
where $Z^K_{\ell,1}$ is a $BPI_K(4-\ell\delta+\tau-\overline{C}\varepsilon_1,4+\overline{C}\varepsilon_1,\tau-\delta,
1-\ell\alpha- C_{\ell-1}\varepsilon_{1},(1-\ell\alpha-C_{\ell-1}\varepsilon_{1})_+)$ and $Z^K_{\ell,2}$ is a
$BPI_K(4-\ell\delta+\tau,4-\overline{C}\varepsilon_1,\tau-\delta,
1-\ell\alpha+C_{\ell-1}\varepsilon_{1},(1-\ell\alpha +C_{\ell-1}\varepsilon_{1})_+
)$. \\

Note that, in this Phase 1, $\beta_{\ell-1}(t)$ is affine on $[0,s_1]$, so we can apply Theorem~\ref{thm:BPI-general} on the whole
interval $[0,s_1]$. If $\ell\leq\lfloor 1/\alpha\rfloor$, we apply Theorem~\ref{thm:BPI-general}~(i) and if $\ell=1+\lfloor
1/\alpha\rfloor$, $\beta_\ell(0)=0$ so we apply Theorem~\ref{thm:BPI-general}~(ii), using that
$a=\tau-\delta>r=\tau-\ell\delta+\overline{C}\varepsilon_1$ for $\varepsilon_1$ small enough. We deduce that, in both cases, as $K$
tends to infinity, for all $t\leq\theta^K_1\wedge s_1\wedge T$,
\begin{equation}
\label{etape4}
(\beta_\ell(t)-C_{\ell-1}\varepsilon_1)_+
\leq \frac{\log(1+N_{\ell}^K(t\log K))}{\log K}
\leq (1-\ell\alpha-C_{\ell-1}\varepsilon_1+(\tau-\delta)t)_+\leq \beta_{\ell}(t)+C_{\ell-1}\varepsilon_1.
\end{equation}
Hence, we have proved~\eqref{eq:control-br-cas2} for $\ell+1$ and the induction step is complete. We also deduce
from~\eqref{eq:BPI-extinction} that $N^K_{1+\lfloor 1/\alpha\rfloor}(t\log K)=0$ for all $t$ in a closed interval of $[0,s_1]$
included in the complement of the support of $(1-\ell\alpha-C_{\ell-1}\varepsilon_1+(\tau-\delta)t)_+$.
\medskip

For $\ell=2+\lfloor 1/\alpha\rfloor$, because $\beta_{1+\lfloor 1/\alpha\rfloor}$ has a change of slope at time
\[
t_{{\lfloor 1/\alpha\rfloor,1}}=\frac{\alpha-(1-\lfloor 1/\alpha\rfloor\alpha)}{\tau-\delta},
\]
the bounds~\eqref{eq:control-br-cas2} on the immigration rate do not allow to apply Theorem~\ref{thm:BPI-general} on the whole
interval $[0,s_1]$. Instead, we first apply them successively on the intervals $[0,t_{{\lfloor 1/\alpha\rfloor,1}}]$ and $[t_{{\lfloor 1/\alpha\rfloor,1}},s_1]$. On the first interval, we have the
bounds
\begin{equation*}
Z^K_{\ell,1}(t\log K)\leq N^K_\ell(t \log K)\leq Z^K_{\ell,2}(t\log K),\quad\forall t\leq\theta^K_1\wedge t_{{\lfloor 1/\alpha\rfloor,1}}\wedge T,
\end{equation*}
where $Z^K_{\ell,1}$ is a $BPI_K(4-\ell\delta+\tau-\overline{C}\varepsilon_1,4+\overline{C}\varepsilon_1,0,
-\alpha-C_{\ell-1}\varepsilon_1,0)$ and $Z^K_{\ell,2}$ is a $BPI_K(4-\ell\delta+\tau,4-\overline{C}\varepsilon_1,0,
-\alpha+C_{\ell-1}\varepsilon_1,0)$. We apply Theorem~\ref{thm:BPI-general}~(iii) to deduce that, with probability converging to 1,
$N^K_\ell(t\log K)=0$ for all $t\in[0,t_{{\lfloor 1/\alpha\rfloor,1}}\wedge \theta^K_1\wedge T]$. \\
On the second interval, we obtain similar bounds with $Z^K_{\ell,1}$ a
$BPI_K(4-\ell\delta+\tau-\overline{C}\varepsilon_1,4+\overline{C}\varepsilon_1,\tau-\delta, -\alpha-C_{\ell-1}\varepsilon_1,0)$ and
$Z^K_{\ell,2}$ is a $BPI_K(4-\ell\delta+\tau,4-\overline{C}\varepsilon_1,\tau-\delta, -\alpha+C_{\ell-1}\varepsilon_1,0)$. Because
$(\tau-\delta)(s_1-t_{{\lfloor 1/\alpha\rfloor,1}})<\alpha$ , we apply Theorem~\ref{thm:BPI-general}~(ii) to deduce that, with probability converging to
1, $N^K_\ell(t\log K)=0$ for all $t\in[t_{{\lfloor 1/\alpha\rfloor,1}}\wedge \theta^K_1\wedge T,s_1\wedge \theta^K_1\wedge T]$.
\medskip

For $\ell>2+\lfloor 1/\alpha\rfloor$, we proceed similarly to prove by induction that, with probability converging to
1, $N^K_\ell(t\log K)=0$ for all $t\leq s_1\wedge \theta^K_1\wedge T$.
\medskip

Using the comparison argument with logistic birth-death processes~\eqref{eq:control-dr-3} on the interval
$[0,\theta^1_K\wedge(s_1-\eta)\wedge T]$, we can prove as in the previous section that, for all $\eta>0$,
$\theta^K_1>(s_1-\eta)\wedge T$ with probability converging to 1.
\medskip

To conclude, since $\varepsilon_1$ in~\eqref{etape4} is arbitrary, we have proved that, for all $\eta>0$,
$\log(1+N^K_{\ell}(t\log K))/\log K$ converges to $\beta_\ell(t)$ in $L^\infty([0,(s_1-\eta)\wedge T])$.

\subsubsection{Case $\tau>\delta$: Intermediate Phase 1}
\label{sec:intermediate-1}

The goal of the intermediate phase is to prove that, on a time interval $[\theta^K_1\log
K,\theta^K_1\log K+T(\varepsilon_1,m)]$ with $T(\varepsilon_1,m)$ to be defined below, the two traits $\ell^*_1\delta=0$ and
$\ell^*_2\delta=\delta$ are of size-order $K$ and population with trait $0$ is declining below a small threshold while population
with trait $\delta$ reaches a neighborhood of its equilibrium $K\bar{n}(\delta)=\frac{(3-\delta)K}{C}$.
\medskip

Let us first show that $\theta_{1}^K <s_1+\eta$ with probability converging to one, for
any $\eta>0$. For this, we observe that the bounds of~\eqref{eq:control-br-cas2} and~\eqref{eq:control-dr-bis} are actually true for all
$t\leq \theta^K_1\wedge T$ provided we use $(1-\ell\alpha+(\tau-\delta) t)_+$ instead of $\beta_\ell(t)$
inside~\eqref{eq:control-br-cas2}, since $\beta_\ell(t)$ is constructed only on $[0,s_1]$ in Phase 1. This gives with high
probability for all $t\leq \theta^K_1\wedge T$
\begin{equation}
  \label{eq:midi-trente}
  (1-\ell\alpha+(\tau-\delta)t)_+-C_\ell\varepsilon_1\leq\frac{\log(1+N^K_\ell(t\log K))}{\log K}\leq (1-\ell\alpha+(\tau-\delta)t)_+
  +C_\ell\varepsilon_1.
\end{equation}

In particular, if $\theta_{1}^K \geq s_1+\eta$ with positive probability, we would obtain, with high probability on this event
\[
\frac{\log(1+ N^K_1((s_1+\eta)\log K))}{\log K}\geq 1-\alpha-C_1\varepsilon_1+(\tau-\delta)(s_1+\eta),
\]
which is larger than 1 provided $\varepsilon_1$ is small enough. This would contradict the assumption that $\theta_1^K>s_1+\eta$.
Hence, $\lim_{K\rightarrow+\infty} \theta^K_1=s_1$ in probability.
\medskip

In the previous phase, we used bounds on $N^K_0(t\log K)$ until time $s_1-\eta$. We now need to extend them until $\theta^K_1$. In
this case,~\eqref{eq:damgan-midi} is not true anymore. Therefore, assuming $K$ large enough to get $4K^{-\alpha}<Cm\varepsilon_1$, we
couple $N^K_0(t)$ with two logistic processes $Z^K_{0,1}$ and $Z^K_{0,2}$ of respective laws $LBDI_K(4,1,C,0)$ and $LBDI_K(4-
Cm\varepsilon_1,1+\frac{\tau m\varepsilon_1}{3/C-3\varepsilon_1}+Cm\varepsilon_1,C,0)$: $Z^K_{0,2}\leq N^K_0\leq Z^K_{0,1}$. The equilibrium density of $Z^K_{0,1}$ is
$3/C$ but the one of $Z^K_{0,2}$ is
\[
\bar{z}_{0,2}:=\frac{3}{C}-\varepsilon_1\left(2m+\frac{\tau m}{3-3\varepsilon_1 C}\right).
\]
We choose $m$ small enough to have
\begin{equation}
  \label{eq:ineg-m}
0<  2m+\frac{\tau m}{3-3\varepsilon_1 C}<\frac{1}{3}.
\end{equation}
Then, observing that $Z^K_{0,2}(0)\in \Big[\big(\bar{z}_{0,2}-4\varepsilon_1/3\big)K,\big(\bar{z}_{0,2}+4\varepsilon_1/3\big)K\Big]$
we can apply Lemma \ref{lem:sable-popu}(i) to $Z^K_{0,2}$ with $\varepsilon=4\varepsilon_1/3$ to deduce that
\[
\lim_{K\to \infty}\mathbb{P}\Big(\forall t\in [0, s_1+\eta], \frac{Z^K_{0,2}(t\log K)}{K}\leq \frac{3}{C}
+ 3\varepsilon_1\Big) = 1.
\]
Applying similarly Lemma \ref{lem:sable-popu}(i) to $Z^K_{0,1}$, we obtain
\[
\lim_{K\to \infty}\mathbb{P}\Big(\frac{N^K_{0}(\theta^K_1\log K)}{K}\in [\frac{3}{C} - 3\varepsilon_1, \frac{3}{C}
+ 3\varepsilon_1]\Big) = 1.
\]
Hence, by construction of $\theta^K_1$, we have with probability converging to 1
\[
\sum_{\ell=1}^L N^K_\ell(\theta_1^K \log K)\geq m \varepsilon_1 K.
\]

\begin{rem}
  \label{rem:m-universel}
  The constraint~\eqref{eq:ineg-m} on the constant $m>0$ ensures that, with high probability, at time
  $\theta^K_1$, trait $0$ is still close to its equilibrium and the second resident trait $\delta$ has emerged. Similar constraints on $m$ can be defined for any other resident trait $\ell\delta<3$. In all the proof, we assume that such $m>0$ is chosen.
\end{rem}

Using Formula \eqref{eq:midi-trente} and the convergence of $\theta^K_1$ to $s_1$, we show immediately that, with probability
converging to 1,
\begin{equation}
  \label{eq:1}
  \sum_{\ell=2}^L N^K_\ell(\theta_1^K \log K)\leq K^{1-\alpha/2}.
\end{equation}
Hence $N^K_1(\theta_1^K \log K)\geq m\varepsilon_1 K/2$ for $K$ large enough.

\medskip

It is now possible to use the Markov property by conditioning at time $\theta_1^K \log K$. We distinguish whether the emerging trait
$\delta$ can survive on its own ($\delta<3$) or not ($\delta>3$).

\paragraph{Case $\tau>\delta$: Intermediate Phase 1, case $\delta<3$}
\label{sec:intermediate-1-case-1}

For $\delta<3$, we first claim that there exists $s>0$ small enough such that
\[
\sum_{\ell=2}^L N^K_\ell(t \log K)\leq K^{1-\alpha/4},\quad\forall\, t\in[\theta^K_1,\theta^K_1+s].
\]
This can be obtained from the continuity argument of Lemma~\ref{lem:exposant-bouge-pas-trop}. Then, we can apply Lemma \ref{lem:competition}(i) with
\begin{gather*}
b_0^K(t)=4(1-K^{-\alpha}),\quad
b_1^K(t)=(4-\delta)(1-K^{-\alpha}),\\
d_0^K(t)=d_1^K(t)=1+\left(\frac{C}{K}+\frac{\tau}{\sum_{\ell=0}^L N^K_\ell(t)}\right)\sum_{\ell=2}^L N^K_\ell(t),\\
\tau^K(t)=\tau\frac{N^K_0(t)+N^K_1(t)}{\sum_{\ell=0}^L N^K_\ell(t)},\quad
\gamma^K_0(t)=0,\quad
\gamma^K_1(t)=4 K^{-\alpha} N^K_0(t),
\end{gather*}
which converge respectively to $b_0=4$, $b_1=4-\delta$, $d_0=d_1=1$, $\tau$, 0 and 0. Note that Point~(i) of Lemma
\ref{lem:competition} applies here since $r_0=b_0-d_0=3>0$, $r_1=b_1-d_1=3-\delta>0$ and $S=\tau-\delta>0$. We obtain that there
exists a finite time $T(m,\varepsilon_1)$ such that with large probability,
\[
N^K_0\big(\theta^K_1 \log K+T(m,\varepsilon_1)\big)\leq
m\varepsilon_1 K\quad \mbox{ and }\quad N^K_1\big(\theta_1^K \log K+T(m,\varepsilon_1)\big)\in
\Big[\frac{3-\delta}{C}-3\varepsilon_1, \frac{3-\delta}{C}+3\varepsilon_1\Big].
\]

Hence, we can define the time at which the first intermediate phase ends as
\[\sigma^K_2{\log K}=\theta^K_1\log K+T(m,\varepsilon_1)\]
{where $\sigma^K_2\rightarrow s_1$} in probability.
Using~\eqref{eq:midi-trente}, the continuity argument of Lemma~\ref{lem:exposant-bouge-pas-trop} and the fact that
$(\tau-\delta)s_1=\alpha$, the population state at time $\sigma^K_2$ satisfies with probability converging
to 1
\begin{align*}
 & K^{1-\varepsilon_1}\leq N^K_0(\sigma^K_2{\log K})\leq m\varepsilon_1 K,\quad \frac{N^K_1(\sigma^K_2{\log K})}{K}\in \Big[\frac{3-\delta}{C}-\varepsilon_2,
\frac{3-\delta}{C}+\varepsilon_2\Big], \\
 & \frac{\log(1+N^K_\ell(\sigma^K_2{\log K}))}{\log
   K}\in\Big[1-(\ell-1)\alpha-\varepsilon_{2},1-(\ell-1)\alpha+\varepsilon_{2}\Big]=[\beta_\ell(s_1)-\varepsilon_2,\beta_\ell(s_1)+\varepsilon_2],
 \\ & \phantom{N^K_\ell(\sigma^K_2{\log K}))=0,\quad}\forall \,2\leq\ell\leq 1+\lfloor 1/\alpha\rfloor,\\
 & N^K_\ell(\sigma^K_2{\log K})=0,\quad\forall\, 2+\lfloor 1/\alpha\rfloor\leq\ell\leq L,
\end{align*}
where
\begin{equation}\label{eq:varepsilon1}
  \varepsilon_2 = (C_L\vee 3)\varepsilon_1.
\end{equation}
We are then ready to proceed to Phase 2 (see Section~\ref{sec:phase-k}).

\paragraph{Case $\tau>\delta$: Intermediate Phase 1, case $\delta>3$}
\label{sec:intermediate-1-case-2}

When $\delta>3$, we need to apply Lemma \ref{lem:competition}(iii) instead of (i) since $r_1=3-\delta<0$.
Using~\eqref{eq:midi-trente} and the continuity argument of Lemma~\ref{lem:exposant-bouge-pas-trop} as above, we obtain for all $s>0$
small enough, with probability converging to 1,
\begin{align*}
 & \frac{\log(1+N^K_\ell((\theta^K_1+s)\log K))}{\log
   K}\in\Big[1-(\ell-1)\alpha-\varepsilon_{2},1-(\ell-1)\alpha+\varepsilon_{2}\Big],\quad\forall\, 2\leq\ell\leq 1+\lfloor 1/\alpha\rfloor,\\
 & N^K_\ell((\theta^K_1+s)\log K))=0,\quad\forall \, 2+\lfloor 1/\alpha\rfloor\leq\ell\leq L,
\end{align*}
with $\varepsilon_2$ defined in \eqref{eq:varepsilon1}. Without loss of generality, we can take $s>0$ small enough to apply Lemma
\ref{lem:competition}(iii).

In this case, we define the end of the first intermediate phase as
\[{\sigma}^K_2=\theta^K_1+s,\]
where $s>0$ is a fixed small parameter and
\begin{align*}
 & K^{1-\varepsilon_2}\leq N^K_0({\sigma}^K_2{\log K})\leq K^{-s\rho} N^K_1({\sigma}^K_2 \log K),\quad N^K_1({\sigma}^K_2{\log K})\leq
 K^{1-s\rho}.
\end{align*}

\subsection{Step $k$}
\label{sec:phase-k}

Our goal is to describe the dynamics on the interval $[\sigma^K_k\log K,\theta^K_k\log K]$, that converges in the $\log K$ scale to $[s_{k-1},s_k]$.
Recall that when $\tau<\delta$ there is only one phase. So now, we only consider $\tau>\delta$.\\

Let $k\geq 2$. Assume that Step $k-1$ is completed {and that $T_0>s_{k-1}$ (otherwise, we stop
  the induction at the end of step $[s_{k-2},s_{k-1}]$)}. Two cases may occur: either the emerging trait becomes resident during Step
$k$, or it becomes only a dominant trait. The latter case can occur when the trait is fit ($\ell^*_{k}\delta<3$) but its
population size is small, or when it is unfit ($\ell^*_{k}\delta>3$).

We proceed as in Step 1 by induction over $\ell\in\{0,\ldots,L\}$. For each $\ell$, we couple $N_\ell^K$ with branching
processes with immigration given by the size of the population $\ell-1$ on each time interval included in $[s_{k-1},s_{k}]$ where $\beta_{\ell-1}$ is affine.

In Step 1, we took care to give bounds involving explicit constants $C_\ell$ for sake of precision. From now on, we use a constant
$C_*$ which may change from line to line.

\subsubsection{Step $k$, case 1}
\label{sec:phase-k+1-case-1}

We assume that, $\max_{0\leq\ell\leq L}\beta_\ell(s_{k-1}+s)=1$ with $\ell^*_{k}\delta<3$, and that this maximum is
attained only for $\ell^*_{k-1}$ and $\ell^*_{k}$, since $s_{k-1}<T_0$.

The induction assumption in this case is as follows: suppose that, for all $\varepsilon_{k}>0$ small enough, we have constructed
{$\sigma_{k}^K$ such that $\sigma_{k}^K\log K$ is} a stopping time
and
\begin{itemize}
\item $\sigma^K_{k}$ converges in probability to $s_{k-1}$;
\item
    $\frac{N^K_{\ell^*_{k}}(\sigma^K_{k}{\log K})}{K}\in\Big[\frac{3-\delta\ell^*_{k}}{C}-\varepsilon_{k},\frac{3-\delta\ell^*_{k}}{C}+\varepsilon_{k}\Big]$;
  \item $K^{1-\varepsilon_{k}}\leq N^K_{\ell^*_{k-1}}(\sigma^K_k{\log K})\leq \frac{m}{2}\varepsilon_{k} K$;
  \item for all $\ell\neq\ell^*_{k-1},\ell^*_{k}$, either $N^K_\ell(\sigma^K_k{\log K})=0$ if $\beta_\ell(s_{k-1})=0$, or otherwise
    \[
    \beta_\ell(s_{k-1})-\varepsilon_k\leq\frac{\log(1+N^K_\ell(\sigma^K_k{\log K}))}{\log K}\leq \beta_\ell(s_{k-1})+\varepsilon_k<1.
    \]
\end{itemize}

We now give the proof of the induction step. Let us define
\begin{multline}
\theta_k^K=\inf\left\{t\geq \sigma_k^K : N^K_{\ell^*_k}(t \log K)\not\in \Big[\big(\frac{3-\ell^*_k\delta}{C}-3\varepsilon_{k}\big)K,\big(\frac{3-\ell^*_k \delta}{C}+3\varepsilon_{k}\big)K\Big]\right.\\
\left.\text{ or }\sum_{\ell\neq \ell^*_k} N^K_\ell(t \log K)\geq m\varepsilon_{k} K\right\}.\label{teta0bis}
\end{multline}

Observe first that, by definition of $\theta^K_k$,
\[
1-\varepsilon_k\leq\frac{\log(1+N^K_{\ell^*_k}(t\log K))}{\log K}\leq 1+\varepsilon_k,\quad\forall t\in[\sigma^K_k,\theta^K_k].
\]
Our goal is to obtain bounds on $N^K_\ell$ by induction on $\ell\neq\ell^*_k$.
\medskip

Induction initialization: If $\ell^*_k=0$, we start the induction at $\ell=1$; otherwise, we start it at $\ell=0$.

In the first case, using the Markov property at time $\sigma^K_k \log K$, where $\sigma^K_k$ converges to $s_{k-1}$, we can proceed exactly as in Phase 1 (see Section \ref{sec:phase-1-case-2}) to prove
that, with high probability for all $t\in[\sigma^K_k,\theta^K_k\wedge T]$,
\begin{equation*}
\beta_{1}(s_{k-1})+(\tau-\delta)(t-s_{k-1})-C_*\varepsilon_k
\leq \frac{\log(N^K_{1}(t \log K))}{\log K}
\leq \beta_{1}(s_{k-1})+(\tau-\delta)(t-s_{k-1}) +C_* \varepsilon_k.
\end{equation*}
Note that, for $s_{k-1}\leq t\leq s_{k}$, $\beta_{1}(s_{k-1})+(\tau-\delta)(t-s_{k-1})=\beta_1(t)$, so we recover bounds of the
form~\eqref{eq:control-br-cas2}.

In the second case, since there is no incoming mutation for trait $0$, we can bound the process $N^K_0(t\log K)$ for $t\in [\sigma^K_k,\theta^K_k\wedge T]$ with branching
processes with constant parameters. If $\beta_0(s_{k-1})>0$, we deduce from Lemma~\ref{lem:BP} that
\begin{equation*}
\beta_0(s_{k-1})+(\ell^*_k\delta-\tau)(t-s_{k-1})-C_*\varepsilon_k
\leq \frac{\log(N^K_0(t \log K))}{\log K}
\leq \beta_0(s_{k-1})+(\ell^*_k\delta-\tau)(t-s_{k_1}) +C_* \varepsilon_k.
\end{equation*}
If $\beta_0(s_{k-1})=0$, we deduce that $N^K_0(t\log K)=0$ for all $t\geq\sigma^K_k$.

\medskip

Induction step: Assume that we have proved that, with high probability for all $t\in[\sigma^K_k,\theta^K_k\wedge s_{k}\wedge T]$,
\begin{equation*}
\beta_{\ell-1}(t)-C_* \varepsilon_k
\leq \frac{\log(N^K_{\ell-1}(t \log K))}{\log K}
\leq \beta_{\ell-1}(t) +C_* \varepsilon_{k}.
\end{equation*}
{and that $N^K_{\ell-1}(t\log K)=0$ for all $t\in [\sigma^K_k,\theta^K_k\wedge s_{k}\wedge T]$ such that
  $\beta_{\ell-1}\equiv 0$ on a small neighborhood of $t$, $s\in [(t-\varepsilon_k)\vee s_{k-1},t+\varepsilon_k]$.}
Our goal is to prove that this holds true with $\ell$ replaced by $\ell+1$. We split the interval $[s_{k-1},s_{k}]$ into
subintervals where $\beta_{\ell-1}$ is affine. We proceed inductively on each of these subintervals by coupling with branching
processes with immigration as in Step 1. Let us detail the computation for the first subinterval, say $[s_{k-1},t_1]$. On this interval,
we introduce $a$ and $c$ such that
\[
\beta_{\ell-1}(t)=c+\alpha+a\ (t-s_{k-1}),\quad\forall t\in[s_{k-1},t_1],
\]
to be coherent with the notation of Appendix~\ref{sec:BPI}. {We can then construct as in Step 1 branching processes with immigration bounding from above and below
  $N^K_\ell(t\log K)$ for all $t\in[\sigma^K_k,t_1\wedge\theta^K_k\wedge T]$ with distributions
  $BPI(4-\ell\delta+\tau\mathbbm{1}_{\ell>\ell^*_k}\pm C_*\varepsilon_k,4-\ell^*_k\delta+\tau\mathbbm{1}_{\ell<\ell^*_k}\mp
  C_*\varepsilon_k,a,c\pm C_*\varepsilon_k,\beta_\ell(s_{k-1})\pm C_*\varepsilon_k)$ if $\beta_\ell(s_{k-1})>0$, or $BPI(4-\ell\delta+\tau\mathbbm{1}_{\ell>\ell^*_k}\pm C_*\varepsilon_k,4-\ell^*_k\delta+\tau\mathbbm{1}_{\ell<\ell^*_k}\mp
  C_*\varepsilon_k,a,c\pm C_*\varepsilon_k,0)$ otherwise.} \\

\noindent We shall consider several cases.

\paragraph{Case (a)} Assume that $\beta_\ell(s_{k-1})>0$. Then, applying Theorem~\ref{thm:BPI-general}~(i) to the bounding processes, we
deduce that, with high probability on the time interval $[\sigma^K_k,\theta^K_k\wedge t_1\wedge T]$,
\begin{multline*}
  \left(\beta_\ell(s_{k-1})+S(\ell\delta;\ell^*_k\delta)\,(t-s_{k-1})-C_*\varepsilon_k\right)\vee(\beta_{\ell-1}(t)-\alpha-C_*\varepsilon_k)\vee
  0 \\ \leq \frac{\log(N^K_{\ell}(t \log K))}{\log K} \leq
  \left(\beta_\ell(s_{k-1})+S(\ell\delta;\ell^*_k\delta)\,(t-s_{k-1})+C_*\varepsilon_k\right)\vee(\beta_{\ell-1}(t)-\alpha+C_*\varepsilon_k)\vee
  0.
\end{multline*}

\paragraph{Case (b)} Assume that $\beta_\ell(s_{k-1})=0$ and the time $t_{\ell-1,k}$ defined in~\eqref{eq:def-t-ell-k} satisfies
$t_{\ell-1,k}<t_1$. This last inequality implies that $c<0$ and then $t_{\ell-1,k}=s_{k-1}-c/a$. Applying
Theorem~\ref{thm:BPI-general}~(ii) to the bounding processes, we deduce that, with high probability on the time interval
$[\sigma^K_k,\theta^K_k\wedge t_1\wedge T]$,
\begin{multline*}
  [(S(\ell\delta;\ell^*_k\delta)-C_*\varepsilon_k)\vee a]\vee\left(t-s_{k-1}+\frac{c-C_*\varepsilon_k}{a}\right)\vee 0
  \\ \leq \frac{\log(N^K_{\ell}(t \log K))}{\log K} \leq
  [(S(\ell\delta;\ell^*_k\delta)+C_*\varepsilon_k)\vee a]\vee\left(t-s_{k-1}+\frac{c+C_*\varepsilon_k}{a}\right)\vee 0.
\end{multline*}
The bound can be rewritten as
\[  [S(\ell\delta;\ell^*_k\delta)\vee a](t-t_{\ell-1,k})_+-C_*\varepsilon_k
   \leq \frac{\log(N^K_{\ell}(t \log K))}{\log K} \leq
  [S(\ell\delta;\ell^*_k\delta)\vee a](t-t_{\ell-1,k})_++C_*\varepsilon_k.
\]

\paragraph{Case (c)} Assume that $\beta_\ell(s_{k-1})=0$ and $t_{\ell-1,k}>t_1$. Then, applying Theorem~\ref{thm:BPI-general}~(ii) or~(iii) to the
bounding processes, we deduce that, with high probability on the time interval $[\sigma^K_k,\theta^K_k\wedge t_1\wedge T]$,
$N^K_\ell(t\log K)=0$.
\medskip

Summing up all these cases, it appears that we can extend $\beta_\ell(t)$ on the interval $[s_{k-1},t_1]$ as in~\eqref{eq:evol-beta_j} to
obtain, in all cases,
\begin{equation}
\label{eq:damgan-youpi}
\beta_{\ell}(t)-C_* \varepsilon_k
\leq \frac{\log(N^K_{\ell}(t \log K))}{\log K}
\leq \beta_{\ell}(t) +C_* \varepsilon_{k}.
\end{equation}

We proceed similarly for the other subintervals and deduce that~\eqref{eq:damgan-youpi} holds true with high probability on the time
interval $[\sigma^K_k,\theta^K_k\wedge s_{k}\wedge T]$. This finishes the induction on $\ell\neq\ell^*_k$.
\medskip

As in Phase 1, we can control $N^K_{\ell^*_k}(t)$ by logistic branching processes to show that $\theta^K_k\geq (s_{k}-\eta)\wedge T$
with probability converging to 1 for all $\eta>0$.

It also follows from~\eqref{eq:BPI-extinction} that $N^K_\ell(t\ \log K)=0$ with high probability on close subintervals of
$\text{int}\{\beta_\ell=0\}{\cap[s_{k-1},s_{k})}$, where $\text{int}$ denotes the interior.

\subsubsection{Intermediate Phase $k$, case 1}\label{sec:intermediate-k}

As in Intermediate Phase 1, we first extend the inequalities \eqref{eq:damgan-youpi} to $[\sigma^K_k,\theta^K_k\wedge T]$.
These inequalities involve the $\beta_\ell(t)$ that are constructed in Phase $k$ only until $s_{k}$.
Explicit expressions of $\beta_\ell(t)$, that we do not develop here, can be obtained using the recursion in \eqref{eq:evol-beta_j}. Using these formulas instead of $\beta_\ell(t)$ in \eqref{eq:damgan-youpi}, we obtain inequalities valid on $[\sigma^K_k,\theta^K_k\wedge T]$, with probability converging to 1.\\

At time $s_{k}$, {there exists at least one $\ell^*_{k+1}\neq\ell^*_{k}$ such that} $\beta_{\ell^*_{k+1}}(s_{k})=1$ and $S(\ell^*_{k+1}\delta, \ell^*_{k}\delta)>0$. Proceeding by contradiction as in Intermediate Phase 1, this allows us to prove that
\[\lim_{K\rightarrow +\infty}\theta^K_k=s_{k}.\]
{This ends the proof of Theorem~\ref{thm:transfer-main-new} if $T_0=s_{k}$ (i.e. in cases (ii)(a) or (ii)(c) in Theorem~\ref{thm:transfer-main-new}). If $T_0>s_{k}$,}
we deduce as in Intermediate Phase 1 that, with high probability, for some $\kappa>0$,
\[\sum_{\ell \notin \{\ell^*_{k},\ell^*_{k+1}\}} N^K_\ell(\theta^K_k\log K)\leq K^{1-\kappa},\]
and $N^K_{\ell^*_{k+1}}(\theta^K_k\log K)\geq m\varepsilon_k K/2$.\\

Distinguishing as in Section \ref{sec:intermediate-1-case-1} whether the emerging trait $\ell^*_{k+1}\delta$ is above 3 or not, we
can define a time $\sigma^K_{k+1}{\log K}$ satisfying
the recursion properties stated at the beginning of Section~\ref{sec:phase-k+1-case-1}. In particular, $\sigma^K_{k+1}$ converges to $s_{k}$ if $\max\beta_\ell(s_{k+1}+s)=1$ and $\ell^*_{k+1}\delta<3$, or $\sigma^K_{k+1}$ converges to $s_k+s$ otherwise. {The fact that the
  populations such that $\beta_\ell(s_{k})=0$ are actually extinct at time $\sigma^K_{k}\log K$ follows
  from~\eqref{eq:BPI-extinction} and from the fact that, by definition of $T_0$, we also have $\beta_\ell(s_{k}-\varepsilon)=0$ for
  $\varepsilon>0$ small enough.}

This ends the Step $k$ in case 1.

\subsubsection{Step $k$, case 2}
\label{sec:phase-k+1-case-2}

We assume here that {$T_0>s_{k-1}$ and} $\max_{0\leq \ell \leq L}\beta_\ell(s_{k-1})<1$ or $\ell^*_{k}\delta>3$ and the maximum is attained only for $\ell^*_{k-1}$ and $\ell^*_{k}$.

Here, the induction assumption is as follows: assume that, for all $\varepsilon_k>0$ small enough, for all $s>0$ small enough, we
have constructed a stopping time ${\sigma}_k^K{\log K}$ such that
\begin{itemize}
\item ${\sigma}^K_k$ converges in probability to $s_{k-1}+s$;
\item for all $\ell\not=\ell^*_{k}$, either $N^K_\ell(\sigma^K_k{\log K})=0$ if $\beta_\ell(s_{k-1}+s)=0$, or otherwise
    \[
    \beta_\ell(s_{k-1}+s)-\varepsilon_k\leq\frac{\log(1+N^K_\ell(\sigma^K_{k}{\log K}))}{\log K}\leq \beta_\ell(s_{k-1}+s)+\varepsilon_k<\beta_{\ell^*_{k}}(s_{k-1}+s)<1.
    \]
\end{itemize}

Let us define ${\theta}^K_k$ as
\begin{multline}
\label{teta0chapeau}
{\theta}_k^K=\inf\left\{t\geq \sigma_k^K: \frac{\log(1+N^K_{\ell^*_k}(t \log K))}{\log K}\not\in \Big[\beta_{\ell^*_k}(t)-\varepsilon_k, \beta_{\ell^*_k}(t)+\varepsilon_k, \Big]\right.\\
\left.\text{ or } \sum_{\ell\neq \ell^*_k} N^K_\ell(t \log K)\geq m\varepsilon_{k} N^K_{\ell^*_k}(t\log K)\right\}.
\end{multline}
As in Step $k$, case 1, our goal is to obtain bounds on
$N^K_\ell$, by an induction on $\ell$. Either the trait $\ell^*_k\delta$ remains dominant during the whole phase ($\sup_{t\in [s_{k-1},s_{k}]}\beta_{\ell^*_k}(t)<1$, see case (a) below) or it becomes resident during the phase (case (b)).

\paragraph{Case (a)} In  this case, there is no density dependence since the whole population is of size order $o(K)$. We can proceed exactly as in Step $k$, case 1, with the use of the fitness $\widehat{S}$ (defined in \eqref{eq:fitness-chap}) instead of $S$.\\

With high probability for all $t\in[\sigma^K_k,\theta^K_k\wedge s_{k}\wedge T]$, for all $\ell\in \{0,\dots L\}$,
\begin{equation}\label{eq:temp-fin}
\beta_{\ell}(t)-C_* \varepsilon_k
\leq \frac{\log(N^K_{\ell}(t \log K))}{\log K}
\leq \beta_{\ell}(t) +C_* \varepsilon_{k}
\end{equation}
{and that $N^K_{\ell}(t\log K)=0$ for all $t\in [\sigma^K_k,{\theta}^K_k\wedge s_{k}\wedge T]$ such
  that $\beta_{\ell-1}(s)=0$ on $s\in[(t-\varepsilon_k)\vee s_{k-1},t+\varepsilon_k]$.}

\paragraph{Case (b)} Let us denote by $t_1$ the first time at which $\beta_{\ell^*_k}(t)=1$. We introduce:
\[\widehat{\theta}^K_k=\inf\big\{ t\geq \sigma_k^K \ : \ N_{\ell^*_k}^K(t\log K)\geq m\varepsilon_k K\big\}.\]

On the time interval $[\sigma^K_k,  \widehat{\theta}^K_k\wedge {\theta}^K_k\wedge t_1 \wedge T]$, we proceed as in the case (a) to deduce that \eqref{eq:temp-fin} holds with high probability on this time interval. As in case (a), the exponents $\beta_\ell$ are defined with $\widehat{S}$ for all $s\in [s_{k-1},t_1]$. Extending $\beta_\ell$ on $[s_{k-1},t_1]$ like this is consistent with \eqref{eq:evol-beta_j} where $\widetilde{S}_{s,k}=\widehat{S}$ (see \eqref{eq:def-tilde-S}).\\

Proceeding as in Section \ref{sec:intermediate-k}, we can prove that, with high probability, $\widehat{\theta}^K_k<{\theta}^K_k$ and
\[\lim_{K\rightarrow +\infty}\widehat{\theta}^K_k=t_1.\]
Using Lemma \ref{lem:sable-popu}(ii), there exists $T(\varepsilon_k)$ such that with high probability
\[\frac{N^K_{\ell^*_k}(\widehat{\theta}^K_k \log K+T(\varepsilon_k))}{K}\in \big[\frac{3-\ell^*_k\delta}{C}-3\varepsilon_k,\frac{3-\ell^*_k\delta}{C}+3\varepsilon_k\big].\]
By Lemma \ref{lem:exposant-bouge-pas-trop}, at the time $\widehat{\theta}^K_k \log K+T(\varepsilon_k)$, we have for all $\ell\not=\ell^*_k$,
\[\frac{\log(1+N^K_\ell(\widehat{\theta}^K_k \log K+T(\varepsilon_k)))}{\log K} \in \big[\beta_\ell(t_1)-\varepsilon_k, \beta_\ell(t_1)+\varepsilon_k \big].\]
Applying the Markov property at this time $\widehat{\theta}^K_k \log K+T(\varepsilon_k)$ and proceeding as in Step $k$, case 1, we obtain \eqref{eq:temp-fin} where $\beta_\ell$ now evolves with the fitness $S$ instead of $\widehat{S}$. This
is consistent with \eqref{eq:evol-beta_j} where $\widetilde{S}_{s,k}=S$ on $[t_1,s_{k}]$. This enlightens the introduction of $\widetilde{S}$ in \eqref{eq:def-tilde-S}. This case is the only one where $\widetilde{S}_{t,k}$ is not constant on the phase $[s_{k-1},s_{k}]$.

\subsubsection{Intermediate Phase $k$, case 2}
\label{sec:interm-phase-k+1}

In case (b), a new trait emerges in a resident population. This is treated in Section \ref{sec:intermediate-k}.\\

\noindent In case (a), either there is extinction of the population at time $s_{k}<+\infty$ (note that extinction can occur only in this case), or
there is a change of dominant population at time $s_{k}<+\infty$.

\paragraph{Extinction} In this case, {either $T_0=s_{k}$, and we can conclude the induction as in
  Section~\ref{sec:intermediate-k}, or $T_0>s_{k}$. This means that,} for all $\ell\not=\ell^*_k$, $\beta_{\ell}(t_1-\eta)=0$ for
some $\eta>0$, so after time $t_1-\eta$, $N^K_\ell(t)=0$ with high probability. The population {is then composed
  only of individuals with trait $\ell^*_k\delta$ and} can be dominated by a subcritical branching process. Lemma \ref{lem:BP} proves
the extinction of the population.

\paragraph{Emergence of a new dominant population}
The emergence occurs in a population of size $o(K)$. Recall that $\theta^K_k$ has been defined in \eqref{teta0chapeau}. As in Section \ref{sec:intermediate-k}, we show that $\lim_{K\rightarrow +\infty}{\theta}^K_k=s_{k}$ in probability and that at time ${\theta}^K_k$, with high probability, for some $\kappa>0$,
\[\sum_{\ell \notin \{\ell^*_{k},\ell^*_{k+1}\}} N^K_\ell({\theta}^K_k\log K)\leq K^{-\kappa}N^K_{\ell^*_{k}}({\theta}^K_k\log K),\]
and $N^K_{\ell^*_{k+1}}(\theta^K_k\log K)\geq m\varepsilon_k N^K_{\ell^*_{k}}({\theta}^K_k\log K)/2$.\\

 Depending on the sign of $S=\widehat{S}(\ell^*_{k}\delta,\ell^*_{k+1}\delta)-\widehat{S}(\ell^*_{k+1}\delta,\ell^*_{k}\delta)$, we can use Lemma \ref{lem:competitionTBDI}(i) or (ii). We can then define a time ${\sigma}^K_{k+1}$ satisfying the recursion properties stated in Section \ref{sec:phase-k}.\\

This ends Step $k$ and finishes the proof.

\section{Proof of Theorem~\ref{thm:criterion-evol-suicide}}
\label{sec:general}

Recall that we assume $\tau>\delta$. We first give a lemma describing the dynamics of the exponents before the first re-emergence of
trait 0 or the first time when a trait in $(3,4)$ becomes dominant. Recall that $\widetilde{k}$, $\bar{k}$ and $m_0$ have been
defined in~\eqref{eq:def-k} and~\eqref{eq:def-m0}.
\begin{lem}
  \label{lem:criterion-evol-suicide}
  Under the assumptions of Theorem~\ref{thm:criterion-evol-suicide} and with the definition \eqref{eq:def-tau-bar} of $\bar{\tau}$,
  \begin{description}
  \item[\textmd{(a)}] if $\,m_{0}>0$, we have
  \[
  s_k=\frac{k\alpha}{\tau-\delta}\quad \text{for all} \quad k\delta\leq\bar{k}\delta\wedge 3.
  \]
  For all $\,
    s\leq s_{\bar{k}\wedge \lceil 3/\delta\rceil}\wedge  \bar{\tau}$,
     let $k\delta<3$ be such that $s\in[s_k,s_{k+1}]$. If $k\leq \bar{k}-1$, we have
    \begin{equation}
      \label{eq:lem-a-general}
      \beta_\ell(s)=
      \begin{cases}
        1 & \text{if }\ell=k, \\
        \left[1-(\ell-k)\alpha+(\tau-\delta)(s-s_k)\right]\vee 0 & \text{if }k<\ell\leq L, \\
        1-\frac{\alpha(k-\ell-1)}{\tau-\delta}\left(\tau-\frac{k-\ell}{2}\delta\right)-(\tau-(k-\ell)\delta)(s-s_k) &
        \text{if }0\leq \ell<k,
      \end{cases}
    \end{equation}
    and if $k=\bar{k}$, we have
    \begin{equation}
      \label{eq:lem-a-general-end}
      \beta_\ell(s)=
      \begin{cases}
        1 & \text{if }\ell=\bar{k}, \\
        \left[1-(\ell-\bar{k})\alpha+(\tau-\delta)(s-s_{\bar{k}})\right]\vee 0 & \text{if }\bar{k}<\ell\leq L, \\
        \left[1-\frac{\alpha(\bar{k}-\ell-1)}{\tau-\delta}\left(\tau-\frac{\bar{k}-\ell}{2}\delta\right)-(\tau-(\bar{k}-\ell)\delta)(s-s_{\bar{k}})\right]
        & \\
        \quad\vee\left[1-\ell\alpha-\frac{\alpha(\bar{k}-1)}{\tau-\delta}\left(\tau-\frac{\bar{k}}{2}\delta\right)-(\tau-\bar{k}\delta)(s-s_{\bar{k}})\right]
        & \text{if }0\leq\ell<\bar{k}; \\
      \end{cases}
    \end{equation}
  \item[\textmd{(b)}] if
  $\ m_{0}<0$, we have
  \[
  s_k=\frac{k\alpha}{\tau-\delta}\quad\text{for all}\quad k\delta<3.
  \]
    For all $s\leq s_{\lceil 3/\delta\rceil}$, let $k\delta<3$ be such that $s\in[s_k,s_{k+1}]$, then
    \begin{equation}
      \label{eq:lem-b-general}
      \beta_\ell(s)=
      \begin{cases}
        1 & \text{if }\ell=k, \\
        \left[1-(\ell-k)\alpha+(\tau-\delta)(s-s_k)\right]\vee 0 & \text{if }k<\ell\leq L, \\
        \left[1-\frac{\alpha(k-\ell-1)}{\tau-\delta}\left(\tau-\frac{k-\ell}{2}\delta\right)-(\tau-(k-\ell)\delta)(s-s_k)\right]\vee
        0 & \text{if } 0\vee(k-\widetilde{k}+1)\leq \ell<k, \\
        0 & \text{if } 0\leq \ell\leq k-\widetilde{k}.
      \end{cases}
    \end{equation}
  \end{description}
\end{lem}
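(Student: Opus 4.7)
The proof approach is to proceed by induction on the phase number $k$, at each step applying the recursive formulas from Theorem~\ref{thm:transfer-main-new} (specifically equations~\eqref{eq:evol-beta_0}--\eqref{eq:evol-beta_j}, or equivalently~\eqref{eq:evol-beta_j-simple}, since each phase has a single resident trait with $\widetilde{S}_{t,k}\equiv S$). The base case is phase~1 starting at $s_0=0$ with resident $\ell^*_1\delta=0$ and initial condition~\eqref{eq:transfer-init-cond}. Applying~\eqref{eq:evol-beta_j-simple} inductively in $\ell$, with $S(\ell\delta;0)=\tau-\ell\delta$, one checks that the mutation-driven contribution $\beta_{\ell-1}(s)-\alpha$ always dominates the intrinsic growth term (their slopes being equal for $\ell=1$ and the mutation slope $\tau-\delta$ being strictly larger than $\tau-\ell\delta$ for $\ell\geq 2$), yielding $\beta_\ell(s)=[1-\ell\alpha+(\tau-\delta)s]_+$ on $[0,s_1]$ with $s_1=\alpha/(\tau-\delta)$ as the first time when $\beta_1$ reaches $1$.

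For the inductive step, assume the formula holds through phase $k-1$, so that at $s_k=k\alpha/(\tau-\delta)$ the resident becomes $k\delta$ with $\beta_{k-1}(s_k)=\beta_k(s_k)=1$. On $[s_k,s_{k+1}]$, the fitness reads $S(\ell\delta;k\delta)=(k-\ell)\delta-\tau$ for $\ell<k$ and $S(\ell\delta;k\delta)=\tau-(\ell-k)\delta$ for $\ell>k$. For $\ell>k$, the mutation term $\beta_{\ell-1}(s)-\alpha$ again dominates, producing the upward-propagating formula of the second line of~\eqref{eq:lem-a-general}. For $\ell<k$, the intrinsic term dominates; summing the successive slopes $-(\tau-j\delta)$ for $j=1,\ldots,k-\ell$ weighted by the phase durations $\alpha/(\tau-\delta)$ yields after telescoping the closed-form $1-\frac{\alpha(k-\ell-1)}{\tau-\delta}(\tau-\frac{k-\ell}{2}\delta)$ at time $s_k$, which agrees with the third line of~\eqref{eq:lem-a-general}. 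The new resident is the first $\beta_\ell$ to reach $1$, which is $\beta_{k+1}$, giving $s_{k+1}=s_k+\alpha/(\tau-\delta)$.

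To conclude case (a): the induction proceeds unmodified through phase $\bar{k}-1$. Entering phase $\bar{k}$, by the definition of $\widetilde{k}$ and $\bar{k}$ the slope $\bar{k}\delta-\tau$ is positive and $\beta_0(s_{\bar{k}})=m_0>0$, so $\beta_0$ starts to grow. Its mutational chain of contributions $\beta_0(s)-\ell\alpha$ now competes with the intrinsic term at each $\ell<\bar{k}$, producing the maximum in~\eqref{eq:lem-a-general-end}. The upper time bound $s_{\bar{k}\wedge\lceil 3/\delta\rceil}\wedge\bar{\tau}$ is enforced by the first of two events: either the resident leaves the viability range $(3,4)$ (switching $S$ to $\widehat{S}$ and breaking the formula), or $\beta_0$ reaches $1$, which by direct computation of the slope in phase $\bar{k}$ occurs exactly at $s=\bar{\tau}$ defined in~\eqref{eq:def-tau-bar}. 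In case (b), $m_0<0$ forces each $\beta_\ell$ to hit $0$ during phase $\widetilde{k}+\ell-1$; once zero, the indicator in~\eqref{eq:evol-beta_0} (for $\ell=0$) and the $\vee 0$ floor in~\eqref{eq:evol-beta_j} keep $\beta_\ell$ at $0$ thereafter, since smaller traits cannot be recreated by upward mutations. This gives exactly the trichotomy in~\eqref{eq:lem-b-general}.

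The main obstacle is the bookkeeping needed to justify the dominance claims in the $\vee$ expressions of~\eqref{eq:evol-beta_j-simple} across all phases, together with the algebraic collapse of the telescoping sum of slopes into the compact expression $\frac{\alpha(k-\ell-1)}{\tau-\delta}(\tau-\frac{k-\ell}{2}\delta)$. In phase $\bar{k}$ of case (a), an additional delicate point is verifying that only the two terms appearing in~\eqref{eq:lem-a-general-end} are relevant, and ruling out further contributions from mutation chains through intermediate indices; this uses the strict inequality $\bar{k}\delta-\tau<\tau-\delta$ and the hypothesis $\bar{k}\delta<3$ which guarantees that the resident stays viable throughout, so that $\widetilde{S}_{t,k}=S$ on the whole phase.
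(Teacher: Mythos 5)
Your inductive structure (induction on the phase index $k$, applying Theorem~\ref{thm:transfer-main-new} at each step and tracking which term of the $\vee$ in~\eqref{eq:evol-beta_j} dominates) is the same as the paper's proof, and the intermediate computations you sketch (the telescoping sum for $\beta_\ell(s_k)$, the mutation-driven propagation for $\ell>k$) are all present in the paper. However, two points are genuinely problematic.

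The first is the justification for why the intrinsic term dominates for $\ell<k$, which you correctly flag as the main obstacle but then leave unresolved. The paper proves this by expanding $\beta_\ell(s)$ on $[s_k,s_{k+1}]$ as the $\vee$ of the chain of mutation contributions~\eqref{eq:pf-lem-a} and showing that the affine function~\eqref{eq:function-lem-a} comparing consecutive terms is positive throughout the phase: it takes values between $\frac{\alpha}{\tau-\delta}\bigl(2\tau-(k-\ell+1)\delta\bigr)$ and $\frac{\alpha}{\tau-\delta}\bigl(2\tau-(k-\ell+2)\delta\bigr)$, both nonnegative as long as $k-\ell+2\leq\bar{k}$, and the condition $m_0>0$ ensures all the competing terms are actually positive so the trailing $\vee 0$ never activates. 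This is the content of the proof, not mere bookkeeping, and without it neither~\eqref{eq:lem-a-general} nor the selection of only two terms in~\eqref{eq:lem-a-general-end} can be established.

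The second is an outright sign error: you assert the ``strict inequality $\bar{k}\delta-\tau<\tau-\delta$,'' but this is false. Since $\bar{k}=\lfloor 2\tau/\delta\rfloor$, one has $2\tau/\delta<\bar{k}+1$, i.e.\ $(\bar{k}+1)\delta>2\tau$, i.e.\ $\bar{k}\delta-\tau>\tau-\delta$. Indeed, the paper's conclusion $\bar{\tau}-s_{\bar{k}}<\frac{\alpha}{\tau-\delta}$ is \emph{equivalent} to $2\tau<(\bar{k}+1)\delta$; if your inequality held, $\beta_0$ would not reach $1$ before the next phase change and case~(a) would fail. The mechanism for ruling out contributions from intermediate mutation chains in phase $\bar{k}$ is not a slope comparison at all, but the same positivity argument as above, which now may fail only for the final comparison (at $\ell'=1$) and thus leaves exactly the two terms in~\eqref{eq:lem-a-general-end}.

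A minor additional slip: in the base case you claim the mutation term $\beta_{\ell-1}(s)-\alpha$ ``always dominates,'' with ``slopes being equal for $\ell=1$.'' For $\ell=1$ the mutation contribution $\beta_0(s)-\alpha=1-\alpha$ has slope $0$ while the intrinsic slope is $S(\delta;0)=\tau-\delta>0$, so the intrinsic term dominates; the formula is unchanged, but the reasoning is reversed. Similarly, for case~(b) the paper works with an index $k^{**}\leq\widetilde{k}$ where the sequence $\bigl(1-\frac{\alpha(n-1)}{\tau-\delta}(\tau-\frac{n}{2}\delta)\bigr)_n$ first becomes negative, and the timing of when $\beta_\ell$ reaches $0$ is governed by $k^{**}$, not by ``phase $\widetilde{k}+\ell-1$'' as you state.
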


\begin{proof}
  We apply Theorem~\ref{thm:transfer-main-new} and proceed by induction on $k\geq 0$. We already checked
  that~\eqref{eq:lem-a-general} and~\eqref{eq:lem-b-general} hold true for $k=0$ in the beginning of
  Section~\ref{sec:phase-1-case-2}.

\paragraph{Proof of~(a)}
Assume that $\,m_{0}>0$ and that we proved~\eqref{eq:lem-a-general} until time $s_k$ for some $k\in\{0,1,\ldots,(\lceil
3/\delta\rceil-1)\wedge \bar{k}\}$, and let us prove that~\eqref{eq:lem-a-general} is valid until time $s_{k+1}\wedge \bar{\tau}$ if
$k<\bar{k}$, or that~\eqref{eq:lem-a-general-end} is valid until time $\bar{\tau}<s_{\bar{k}+1}$ if $k=\bar{k}$. At time $s_k$, the
new resident trait $k\delta<3$ replaces the former resident trait $(k-1)\delta$ and hence, the values of the fitnesses are given by
\begin{equation}
  \label{eq:fitnesses-lem}
  S(\ell\delta;k\delta)=\tau-(\ell-k)\delta \text{ if }\ell>k,\ S(k\delta;k\delta)=0,\
  S(\ell\delta;k\delta)=(k-\ell)\delta-\tau\text{ if }\ell< k.
\end{equation}
Since $S(\ell\delta;k\delta)<\tau-\delta=S((k+1)\delta;k\delta)$ for all $\ell>k+1$, applying Theorem~\ref{thm:transfer-main-new}, we
obtain for all $s\geq s_k$,
\[
\beta_\ell(s)=[1-(\ell-k)\alpha+(\tau-\delta)(s-s_k)]\vee 0,\quad\forall \ell>k,
\]
until the next change of resident population. Since obviously $\beta_k(s)=1$ until this time, we have proved the first two lines
in~\eqref{eq:lem-a-general} (or of~\eqref{eq:lem-a-general-end} if $k=\bar{k}$). For the last lines, we obtain for all $s\geq s_k$
and all $\ell<k$
\begin{multline}
  \beta_\ell(s)=\left[\beta_\ell(s_k)-[\tau-(k-\ell)\delta](s-s_k)\right]\vee\left[\beta_{\ell-1}(s_k)-\alpha-[\tau-(k-\ell+1)\delta](s-s_k)\right]\vee\ldots
  \\ \vee \left[\beta_0(s_k)-\ell\alpha-[\tau-k\delta](s-s_k)\right]\vee 0, \label{eq:pf-lem-b}
\end{multline}
until the next change of resident trait. Now, all the terms in the right-hand side except the last one lie between two consecutive
terms of the sequence $\left(1-\frac{\alpha (n-1)}{\tau-\delta}\left(\tau-\frac{n}{2}\delta\right)\right)_{n\geq 0}$, so they are all
positive {since we assumed $m_0>0$}. Hence
\begin{multline}
  \beta_\ell(s)=\left[\beta_\ell(s_k)-[\tau-(k-\ell)\delta](s-s_k)\right]\vee\left[\beta_{\ell-1}(s_k)-\alpha-[\tau-(k-\ell+1)\delta](s-s_k)\right]\vee\ldots
  \\ \vee \left[\beta_0(s_k)-\ell\alpha-[\tau-k\delta](s-s_k)\right]. \label{eq:pf-lem-a}
\end{multline}
In addition, for all $1\leq\ell<k$, the function
\begin{equation}
  \label{eq:function-lem-a}
  s\in[s_k,s_{k+1}\wedge \bar{\tau}]\mapsto\beta_\ell(s_k)-[\tau-(k-\ell)\delta](s-s_k)-\beta_{\ell-1}(s_k)+[\tau-(k-\ell+1)\delta](s-s_k)+\alpha
\end{equation}
is affine and hence takes values between its values at times $s_k$ and $s_{k+1}$, i.e. between
\[
\frac{\alpha}{\tau-\delta}\left(2\tau-(k-\ell+1)\delta\right)\quad\text{and}\quad
\frac{\alpha}{\tau-\delta}\left(2\tau-(k-\ell+2)\delta\right).
\]

In the case where $k<\bar{k}$, we have $1\leq k-\ell\leq k-1 \leq\bar{k}-2=\lfloor 2\frac{\tau}{\delta}\rfloor-2$, so both terms
above are nonnegative, and the function~\eqref{eq:function-lem-a} is positive for all $s-s_k\in[0,\frac{\alpha}{\tau-\delta}]$. This
means that the maximum of two consecutive terms in the right-hand side of~\eqref{eq:pf-lem-a} is always the first term, therefore
\[
\beta_\ell(s)=\beta_\ell(s_k)-[\tau-(k-\ell)\delta](s-s_k),\quad \forall 0\leq\ell<k,
\]
until the next change of resident population. Since $k-\ell+1\leq\bar{k}$ for all $0\leq \ell <k$, we have
\[
\beta_\ell(s_k)-[\tau-(k-\ell)\delta]\frac{\alpha}{\tau-\delta}=1-\frac{\alpha
  (k-\ell)}{\tau-\delta}\left(\tau-\frac{k-\ell+1}{2}\delta\right)<1,
\]
so the first exponent $\beta_\ell$ for $\ell\neq k$ reaching 1 after time $s_k$ is $\ell=k+1$ and the next change of resident
population occurs at time $s_{k+1}$. Therefore, we have proved~\eqref{eq:lem-a-general}.

Let us now consider the case where $k=\bar{k}$. The previous argument shows that the maximum of two consecutive terms in the
right-hand side of~\eqref{eq:pf-lem-a} is always the first term, except possibly for the last two terms. Therefore, in all cases,
\[
\beta_\ell(s)=\left[\beta_\ell(s_{\bar{k}})-[\tau-(\bar{k}-\ell)\delta](s-s_{\bar{k}})\right]\vee
\left[\beta_0(s_{\bar{k}})-\ell\alpha-[\tau-\bar{k}\delta](s-s_{\bar{k}})\right] ,\quad \forall 0\leq\ell<\bar{k},
\]
until the next change of resident population. In this case, the first exponent to reach 1 is $\beta_0$, at time $\bar{\tau}$, since
$\bar{\tau}-s_{\bar{k}}<\frac{\alpha}{\tau-\delta}$. This concludes the proof of~\eqref{eq:lem-a-general-end}.

\paragraph{Proof of~(b)}
Assume now that $m_{0}<0$. This means that the sequence $\left(1-\frac{\alpha
    (n-1)}{\tau-\delta}\left(\tau-\frac{n}{2}\delta\right)\right)_{n\geq 0}$ becomes negative at some index $k^{**}\leq \widetilde{k}$ and
decreases until index $n=\widetilde{k}$.

Assume that we proved~\eqref{eq:lem-b-general} until time $s_k$ for some $k\in\{0,1,\ldots,\lceil 3/\delta\rceil-1\}$, and let us
prove that~\eqref{eq:lem-b-general} is valid until time $s_{k+1}$. The fitnesses are the same as in~\eqref{eq:fitnesses-lem}, and the
first computations of case~(a) apply similarly to prove the first two lines of~\eqref{eq:lem-b-general} until the next change of
resident population.

In order to prove the last two lines of~\eqref{eq:lem-b-general}, we need to modify~\eqref{eq:pf-lem-b} accordingly. To this aim, let
us observe that, when the exponent $\beta_\ell(s_k)$ of trait $\ell\delta$ is 0, it cannot increase (even if its fitness is positive)
unless some mutant individuals get born from trait $(\ell-1)\delta$, which only occurs at times $s$ such that $\beta_{\ell-1}(s)\geq
\alpha$. Therefore,~\eqref{eq:pf-lem-b} needs to be modified as $\beta_\ell(s)=0$ if $0\leq\ell\leq k-k^{**}$, for all $s\geq s_k$
until the next change of resident trait, or \eqref{eq:pf-lem-b} remains true for $0\vee(k-k^{**}+1)\leq \ell\leq k-2$. As in case
(a), we can prove that the maximum between two successive terms in the previous expression (except the last one, 0) is always reached
by the first one, hence
\[
\beta_\ell(s)=\left[\beta_\ell(s_k)-[\tau-(k-\ell)\delta](s-s_k)\right]\vee 0. 
\]
In view of the expression for $\beta_\ell(s)$, the next change of resident population occurs when $\beta_{k+1}(s)$ hits 1, at time
$s_{k+1}=s_k+\frac{\alpha}{\tau-\delta}$. This ends the proof of~\eqref{eq:lem-b-general}.
\end{proof}

\begin{proof}[Proof of Theorem~\ref{thm:criterion-evol-suicide}]
  Let us first prove (a). We assume  $\,m_{0}>0$  and $\bar{k}\delta<3$. Then, it follows from
  Lemma~\ref{lem:criterion-evol-suicide}~(a) that there is re-emergence of trait 0 at time
 $\bar{\tau}$.

   \medskip

   Let us now consider the case (b). Assume $\,m_{0}<0$. We introduce the first index such that the trait becomes larger than 3 by
   $\widehat{k}=\lceil \frac{3}{\delta}\rceil$.

   Note that the integer $k^{**}$ defined in the last proof satisfies $k^{**}\leq \widetilde{k}\leq\widehat{k}$. Hence,
   Lemma~\ref{lem:criterion-evol-suicide}~(b) implies that traits 0, $\delta$, $\cdots$, $(\widehat{k}-k^{**}) \delta$, get lost
   before their re-emergence and before a trait larger than 3 becomes dominant. Note that they remain extinct forever since mutations
   only produce individuals with larger traits. Our goal is to prove that, after time $s_{\widehat{k}}$, the other traits get
   progressively lost until the global extinction of the population.

   At time $s_{\widehat{k}}$, the dominant trait becomes $\widehat{k}\delta>3$ and the fitnesses are given by
   \begin{equation}
    \label{eq:fitnesses-cas-b}
    \widehat{S}(\ell\delta;\widehat{k}\delta)=3-\ell\delta-\tau\text{ if }\ell<\widehat{k};\quad
    \widehat{S}(\widehat{k}\delta;\widehat{k}\delta)=3-\widehat{k}\delta;\quad \widehat{S}(\ell\delta;\widehat{k}\delta)=3-\ell\delta+\tau\text{ if }\ell>\widehat{k}.
  \end{equation} Now, for
  $\widehat{k}-k^{**}<\ell<\widehat{k}$, $\widehat{S}(\ell\delta;\widehat{k}\delta)<S(\ell\delta;\widehat{k}\delta)<0$ (recall that $k^{**}\leq \widetilde{k}$). Therefore, all positive exponents $\beta_\ell(s)$ for
  $\ell<\widehat{k}$ keep on decreasing until they hit 0 (which means that trait $\ell\delta$ is lost) or until the next change of
  dominant population. For all $\ell>\widehat{k}$, $\beta_\ell(s_{\widehat{k}})=[1-(\ell-\widehat{k})\delta]_+$ and $\beta_\ell$
  has slope $\widehat{S}((\widehat{k}+1)\delta;\widehat{k}\delta)=\widehat{S}(\widehat{k}\delta;\widehat{k}\delta) +\tau-\delta$, therefore the
  next change of dominant population occurs when $\beta_{\widehat{k}+1}(s)=\beta_{\widehat{k}}(s)$ at time $s_{\widehat{k}+1}$. At this time, by
  definition of $k^{**}$,
  \begin{align*}
    \beta_{\widehat{k}-k^{**}+1}(s_{\widehat{k}+1})
    & \leq \left[\beta_{\widehat{k}-k^{**}+1}(s_{\widehat{k}})+((k^{**}-1)\delta-\tau)\frac{\alpha}{\tau-\delta}\right]\vee 0=0,
  \end{align*}
  so trait $(\widehat{k}-k^{**}+1)\delta$ is lost. For all $\widehat{k}-\ell^*+1<\ell<\widehat{k}+1$,
  \begin{align*}
    \beta_\ell(s_{\widehat{k}+1}) & \leq \left[\beta_\ell(s_{\widehat{k}})+(3-\ell\delta-\tau)\frac{\alpha}{\tau-\delta}\right]\vee \ldots \\
    & \quad\vee
    \left[\beta_{\widehat{k}-k^{**}+2}(s_{\widehat{k}})-(\ell+k^{**}-\widehat{k}-2)\alpha+(3-(\widehat{k}-k^{**}+2)\delta-\tau)\frac{\alpha}{\tau-\delta}\right]\vee
    0\\
    & \leq \left[\beta_\ell(s_{\widehat{k}})-(\tau-(\widehat{k}-\ell)\delta)\frac{\alpha}{\tau-\delta}\right]\vee \ldots \\
    & \quad\vee
    \left[\beta_{\widehat{k}-k^{**}+2}(s_{\widehat{k}})-(\ell+k^{**}-\widehat{k}-2)\alpha-(\tau-(k^{**}-2)\delta)\frac{\alpha}{\tau-\delta}\right]\vee
    0 \\ & =\left[\beta_\ell(s_{\widehat{k}})-(\tau-(\widehat{k}-\ell)\delta)\frac{\alpha}{\tau-\delta}\right]\vee 0,
  \end{align*}
  where the last equality was proved in the proof of Lemma~\ref{lem:criterion-evol-suicide}~(b). Hence, we can proceed inductively to prove
  that, for all $1\leq k\leq L-\widehat{k}$, at time $s_{\widehat{k}+k}=(\widehat{k}+k)\frac{\alpha}{\tau-\delta}$, $\max_{0\leq\ell\leq
    L}\beta_\ell(s_{\widehat{k}+k})=\beta_{\widehat{k}+k}(s_{\widehat{k}+k})>0$, $\beta_\ell(s_{\widehat{k}+k})=0$ for all $\ell\leq
 \widehat{k}+k-k^{**}$, all exponents $\beta_\ell(s)$ with $\widehat{k}+k-k^{**}<\ell\leq \widehat{k}+k$ decrease. In addition,
  \begin{itemize}
  \item either $\beta_{\widehat{k}+k}(s_{\widehat{k}+k})>\alpha$ and $\widehat{k}+k<L$, and then
    $\beta_{\widehat{k}+k+\ell}(s_{\widehat{k}+k})=[\beta_{\widehat{k}+k}(s_{\widehat{k}+k})-\ell\alpha]\vee 0$ for all $1\leq\ell\leq L-\widehat{k}-k$,
    the next time of change of dominant population is $s_{\widehat{k}+k+1}=(\widehat{k}+k+1)\frac{\alpha}{\tau-\delta}$;
  \item or $\beta_{\widehat{k}+k}(s_{\widehat{k}+k})<\alpha$ or $\widehat{k}+k=L$, and then $\beta_{\widehat{k}+k+\ell}(s_{\widehat{k}+k})=0$ for all
    $\ell\geq 1$, so every exponent keeps on decreasing until $\beta_{\widehat{k}+k}(s)=0$ and there is extinction of the population.
  \end{itemize}
  The second case will necessarily occur after a finite number of steps, so extinction of the population occurs before the re-emergence
  of any trait. This ends the proof of~(b).
  \medskip

  To prove (c), we assume $m_{0}>0$ and $\bar{k}\geq \widehat{k}$. By Lemma~\ref{lem:criterion-evol-suicide}, in both cases,
  $\beta_0(s_{\widehat{k}})\in(0,1)$ and trait $\widehat{k}\delta$ becomes dominant at time $s_{\widehat{k}}$. Since
  $\widehat{k}\delta>3$, the dynamics of the exponents $\beta_\ell$ after time $s_{\widehat{k}}$ is governed by the fitnesses given
  in \eqref{eq:fitnesses-cas-b}.

  Since $\widehat{S}(\widehat{k}\delta;\widehat{k}\delta)<0$, $\max_{0\leq\ell\leq L}\beta_\ell(s)<1$ for all $s>s_{\widehat{k}}$ until the next change of dominant trait. Observe that the fitness of trait 0, $\widehat{S}(0;\widehat{k}\delta)=3-\tau$ is positive as long as
  $\widehat{k}\delta$ is the dominant trait, and for any other dominant trait $\ell\delta$, we have
  \[
  \widehat{S}(0;\ell\delta)=
  \begin{cases}
    3-\tau & \text{if }\ell\geq 1, \\
    3 & \text{if }\ell=0.
  \end{cases}
  \]
  So, as long as $\max_{0\leq\ell\leq L}\beta_\ell(s)<1$, $\beta_0(s)$ is increasing. This implies that there is necessarily
  re-emergence of some trait before the extinction of the population. Since in addition
  $\widehat{S}(\ell\delta;\ell\delta)=3-\ell\delta<0$ for all $\ell\geq\widehat{k}$, the first re-emerging trait cannot be
  any of the traits $\ell\delta$ for $\ell\geq \widehat{k}$. This ends the proof of~(c).
\end{proof}

\appendix

\section{Branching process in continuous time}
\label{sec:linear-BDP}

The goal of this section and the next two ones is to give general results on specific branching or birth-and-death processes in one and two dimensions.

\medskip \noindent Here, we consider a single population $(Z^K_t,t\geq 0)$, following a linear
birth and death process, i.e.\ a branching process, with individual birth rate $b\geq 0$, individual death rate $d\geq 0$ and initial
value $Z^K_0=\lfloor K^\beta-1\rfloor\in\N$. The definition of $Z^K_0$ means that the population is extinct initially when $\beta=0$, and that
$Z^{K}_0\sim K^\beta$ when $K\rightarrow+\infty$ if $\beta>0$. We denote the law of $Z^K$ by $BP_K(b,d,\beta)$.
In the sequel, we set $r=b-d$.
\begin{lem}
  \label{lem:BP}
  Let $Z^K$ follow the law $BP_K(b,d,\beta)$, where $b,d\geq 0$ and $\beta>0$. Then the process $\bigg(\frac{\log( 1+ Z^K_{s\log
      K})}{\log K}, s\geq 0\bigg)$ converges in probability in $L^\infty([0,T])$ for all $T>0$ to $((\beta+ r s)\vee 0, s\geq 0)$
  when $K$ tends to infinity.

  In addition, if $b<d$, for all $t>\beta/r$,
  \begin{equation}
    \label{eq:true-extinction-LBP}
    \lim_{K\rightarrow +\infty}\P\left(Z^K_{t\log K}=0\right)=1.
  \end{equation}
\end{lem}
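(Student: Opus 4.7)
The plan is to prove Part 1 by establishing separately a uniform upper bound and a uniform lower bound on $\log(1+Z^K_{s\log K})/\log K$, both relying on the martingale $M_t:= e^{-rt}Z^K_t$ and on the classical moment formulas for a linear birth-death process; Part 2 will then follow from the explicit formula for the one-ancestor extinction probability.

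For the upper bound I would use that $M$ is a nonnegative martingale with $M_0\le K^\beta$. Doob's $L^1$ maximal inequality on $[0,T\log K]$ gives, for any $\varepsilon>0$,
\[
\mathbb{P}\Bigl(\sup_{t\le T\log K} e^{-rt}Z^K_t \ge K^{\beta+\varepsilon}\Bigr)\le K^{-\varepsilon},
\]
so that on the complementary event $\log(1+Z^K_{s\log K})/\log K \le (\beta+rs)_+ +\varepsilon+o(1)$ uniformly in $s\in[0,T]$. For the lower bound, only the open region $\{s:\beta+rs>0\}$ is non-trivial; there I would use the closed-form variance $\operatorname{Var}(Z^K_t) = Z^K_0(b+d)e^{rt}(e^{rt}-1)/r$ (with the usual $r=0$ limit) to bound $\operatorname{Var}(M_t)/(\mathbb{E} M_t)^2 = O(K^{-\beta})$ uniformly on every compact subset of this region, and then apply Chebyshev to obtain pointwise convergence $M_{s\log K}/Z^K_0\to 1$ in probability. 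To upgrade to uniformity in $s$, I would discretize $[0,T]$ into $N=N(\varepsilon)$ points, take a union bound at the grid, and on each subinterval $[s_i\log K,s_{i+1}\log K]$ control multiplicative fluctuations of $Z^K$ around $e^{r(t-s_i\log K)}Z^K_{s_i\log K}$ via a second Doob inequality applied to the restarted martingale $e^{-r(t-s_i\log K)}Z^K_t$ conditionally on $\mathcal F_{s_i\log K}$.

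For Part 2, with $r=b-d<0$, the classical formula $q_t = d(e^{rt}-1)/(be^{rt}-d)$ for the one-ancestor extinction probability gives $1-q_t\sim\tfrac{d-b}{d}e^{rt}$ as $t\to+\infty$. Since $Z^K$ is a sum of $Z^K_0$ i.i.d.\ single-ancestor lineages,
\[
\mathbb{P}(Z^K_{t\log K}=0)=q_{t\log K}^{Z^K_0}\longrightarrow 1
\]
precisely when $Z^K_0(1-q_{t\log K})\sim cK^{\beta+rt}\to 0$, i.e.\ when $t$ lies beyond the critical time $-\beta/r$ at which $(\beta+rs)_+$ first vanishes.

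The main obstacle is the uniform-in-$s$ lower bound of Part 1: pointwise Chebyshev is immediate, but the $L^\infty$ upgrade requires either an $L^2$ maximal inequality on $M$ (clean for $r>0$, where $M$ is $L^2$-bounded, but breaking down for $r<0$ since $\operatorname{Var}(M_t)$ then grows like $e^{|r|t}$) or the discretization-plus-restart argument sketched above. Everything else reduces to standard maximal inequalities and explicit computations for the linear birth-death process.
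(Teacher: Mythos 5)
Your proposal is correct and works with the same underlying martingale $e^{-rt}Z^K_t$ as the paper, but organizes the estimates along a different route. Your use of the $L^1$ weak maximal inequality for nonnegative martingales to get the uniform upper bound $\log(1+Z^K_{s\log K})/\log K\leq(\beta+rs)_++\varepsilon+o(1)$ on all of $[0,T]$ in one stroke, regardless of the sign of $r$, is a genuine simplification: the paper instead handles the window past the hitting time $\beta/|r|$ with a separate Yule-process domination of surviving lineages (its Step~3(iii)). Conversely, for the uniform lower bound when $r<0$ the paper avoids your discretization-plus-restart scheme entirely: it applies Doob's $L^2$ inequality to $\widehat M^K_t=\int_0^t e^{-ru}\,dM^K_u$ on the truncated interval $[0,T_\varepsilon\log K]$ with $T_\varepsilon=(\beta-\varepsilon)/|r|$, where $\E\langle\widehat M^K\rangle_{T_\varepsilon\log K}$ is only of order $K^{2\beta-\varepsilon}$, so that taking threshold $K^\eta$ with $\eta=\beta-\varepsilon/3$ makes the Doob bound vanish and yields the uniform two-sided control on $[0,T_\varepsilon]$ from a single inequality --- no grid, no union bound. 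A small imprecision in your sketch: for $r<0$ the ratio $\text{Var}(M_{s\log K})/(\E M_{s\log K})^2$ is of order $K^{|r|s-\beta}$, not uniformly $O(K^{-\beta})$; the exponent degrades as $s\uparrow\beta/|r|$, which is exactly why one must keep a fixed margin $\varepsilon$ from the critical time, as you do implicitly by working on compact subsets. Part~2 is the same as the paper's argument, built on $q_t=d(e^{rt}-1)/(be^{rt}-d)$; your reading of the threshold as $-\beta/r=\beta/|r|$ is the intended one (the ``$\beta/r$'' in the statement is a sign slip).
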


\begin{proof}
We divide the proof in several steps. We borrow ideas from~\cite{durrettmayberry}.
\medskip

\noindent \textbf{Step 1. Construction of a martingale}.
The process writes
\ben
Z^K_t = K^\beta + M^K_{t} + \int_{0}^t r\, Z^K_{s} ds,\een
where
$M^K$ is a square integrable martingale with quadratic variation
$ \langle M^K\rangle_{t}= \int_{0}^t (b+d)Z^K_{s} ds$.
Taking the expectation, we obtain $\ \E(Z^K_t)=  K^\beta + \int_0^t r\, \E(Z^K_s)ds$, which gives that
\begin{equation}
\E(Z^K_t)=  K^\beta e^{rt}.\label{NMsimple:esp}
\end{equation}

\noindent Using Itô's formula,
$\
1+e^{-r t}Z^K_t=1+K^\beta +\widehat{M}^K_t$,
where $\widehat{M}^K_t=\int_0^t e^{-rs}dM^K_s$ is a square integrable martingale with quadratic variation process
$$\langle \widehat{M}^K\rangle_t=\int_0^t e^{-2rs}(b+d)Z^K_s ds.$$

Using Doob inequality, for any $0<\eta<\beta$,
\begin{align}
\P\big(\sup_{t\leq T \log K} \big|e^{-r t}Z^K_t-K^\beta\big|\geq K^{\eta}\big) = & \ \P\big(\sup_{t\leq T\log K}
|\widehat{M}^K_t|\geq K^{\eta}\big) \notag \\
  \leq & \ 4 K^{-2\eta} \E\big(\langle M^K\rangle_{T\log K}\big) \notag \\
= &\  4 K^{-2\eta} \int_{0}^{T\log K} (b+d) e^{-2rs} K^\beta e^{rs} ds \notag \\
= &\  4 K^{\beta-2\eta} (b+d)
\begin{cases}
\frac{1}{r} \Big(1-K^{-rT}\Big) & \text{if }b\neq d, \\
T\log K & \text{if }b=d.
\end{cases}
\label{etape1}
 \end{align}
\medskip

\noindent \textbf{Step 2. Case $r> 0$}. Fix $T>0$ and $\eta=2\beta/3$. On the set
$$\Omega^K_1=\big\{\sup_{t\leq T \log K} \big|e^{-r t}Z^K_t-K^\beta\big|\leq K^{\frac{2\beta}{3}}\big\},$$
whose probability tends to 1 by~\eqref{etape1}, we have:
\begin{multline*}
\sup_{t\leq T}\left|\frac{\log(1+Z^K_{t\log K})}{\log K}- \big(\beta +rt\big)\right|\\
= \ \sup_{t\leq T}\left|\frac{\log(1+Z^K_{t\log K})}{\log K} - \frac{\log(1+K^{\beta+rt})}{\log K} + \frac{\log(1+K^{\beta+r
      t})}{\log K}-\frac{\log(K^{\beta+rt})}{\log K}\right|
\end{multline*}

\begin{align*}
\leq &\  \sup_{t\leq T} \left[\frac{1}{\log K}\left|\log\Big(\frac{1+Z^K_{t\log K}}{1+K^{\beta+rt}}\Big)\right| +\frac{\log\big(1+K^{-\beta-rt}\big)}{\log K}\right]\\
\leq &\   \sup_{t\leq T} \left[\frac{1}{\log K}\left|\log\left(\frac{1+Z^K_{t\log K}\vee K^{\beta+rt}}{1+Z^K_{t\log K}\wedge K^{\beta+rt}}\right)\right| +\frac{K^{-\beta-rt}}{\log K}\right] \\
= &\   \sup_{t\leq T} \left[\frac{1}{\log K}\log\left(1+\frac{\left|Z^K_{t\log K}- K^{\beta+rt}\right|}{1+Z^K_{t\log K}\wedge
      K^{\beta+rt}}\right) +\frac{K^{-\beta-rt}}{\log K}\right], \\
\leq &\  \sup_{t\leq T}\left[\frac{1}{\log K} \frac{|Z^K_{t\log K}-K^{\beta+rt}|}{1+K^{\beta+rt}-K^{\frac{2\beta}{3}+rt}}+\frac{K^{-\beta-rt}}{\log K}\right]\\
\leq &\  \frac{K^{\frac{2\beta}{3}}}{\log K}\,\sup_{t\leq T}\frac{K^{rt}}{K^{\beta+rt}-K^{\frac{2\beta}{3}+rt}} + \frac{K^{-\beta}}{\log K}\\
\leq &\  \frac{2K^{-\frac{\beta}{3}}+K^{-\beta}}{\log K} \ \hbox{ (for $K$ large enough) },
\end{align*}
which converges to zero when $K\rightarrow +\infty$.

\medskip \noindent A simple adaptation of the previous argument gives the same conclusion for $b=d$.
\medskip

\noindent \textbf{Step 3. Case $r<0$}. Since the function $t\mapsto\beta+rt$ vanishes at time $\beta/|r|$, we need to
consider three phases. We fix $\varepsilon\in(0,\beta)$ and set $T_\varepsilon=\frac{\beta-\varepsilon}{|r|}$ and
$\eta=\beta-\varepsilon/3$. First we prove that, before time $T_\varepsilon\log K$ the population size remains large enough to use the same
argument as in Step 2. Second, the population gets extinct with high probability between time $T_\varepsilon\log K$ and
$(T_\varepsilon+2\varepsilon/|r|)\log K$ and third, in order to obtain the convergence for the $L^\infty$ norm, we prove that the
supremum of the process on the this time interval remains of the order of $\varepsilon$. Since $\varepsilon$ can be chosen
arbitrarily small, the result follows.
\medskip

\noindent \textbf{Step 3(i).} On the set
$$\Omega^K_2=\big\{\sup_{t\leq T_\varepsilon\log K} \big|e^{-rt}Z^K_t-K^\beta\big|\leq K^{\beta-\frac{\varepsilon}{3}}\big\},$$
whose probability tend to 1, we have:
\begin{align*}
\sup_{t\leq T_\varepsilon}\left|\frac{\log(1+Z^K_{t\log K})}{\log K}- \big(\beta +rt\big)\right|
\leq & \ \sup_{t\leq T_\varepsilon}\left[\frac{1}{\log K} \frac{|Z^K_{t\log K}-K^{\beta+rt}|}{1+\big(K^{\beta+rt}-K^{\frac{2\beta}{3}+rt}\big)_+}+\frac{K^{-\beta-rt}}{\log K}\right]\\
\leq &\  \frac{K^{\beta-\frac{\varepsilon}{3}}}{\log K}\,\sup_{t\leq T_\varepsilon}\frac{K^{rt}}{1+\big(K^{\beta+rt}-K^{\frac{2\beta}{3}+rt}\big)_+} + \frac{K^{-\varepsilon}}{\log K}\\
\leq &\  \frac{2 K^{-\frac{\varepsilon}{3}}+K^{-\varepsilon}}{\log K} \ \hbox{ (for $K$ large enough) },
\end{align*}
which also converges to zero when $K\rightarrow +\infty$.

\medskip\noindent\textbf{Step 3(ii).} It
follows from the last step that, with probability converging to 1, $Z^K_{T_\varepsilon\log K}\leq 2K^\varepsilon$. Remind
from~\cite[Section 5.4.5, p. 180]{Meleardbook} that, if $T_{\text{ext}}$ denotes the extinction time of a $BP(b,d,1)$,
  \begin{align}
    \label{eq:Sylvie}
    \PP(T_{\text{ext}}>t)=\frac{r e^{rt}}{b e^{rt}-d}.
  \end{align}
  Hence, for a $BP(b,d,2K^\varepsilon)$ branching process,
  \begin{align*}
    \PP(T_{\text{ext}}>t)=1-\left(1-\frac{r e^{rt}}{b e^{rt}-d}\right)^{2K^\varepsilon}.
  \end{align*}
  Thus, for $t=\frac{2\varepsilon}{|r|}\log K$,
  \begin{align*}
    \PP\left(T_{\text{ext}}>\frac{2\varepsilon}{|r|}\log K\right)\sim 2\,\frac{|r|}{d}\,K^{-\varepsilon}
  \end{align*}
  as $K\rightarrow+\infty$. Since this goes to 0 when $K\rightarrow+\infty$, we have completed the proof
  of~\eqref{eq:true-extinction-LBP}.

\medskip\noindent\textbf{Step 3(iii).}
Since the last two steps were true for any value of $\varepsilon>0$, in order to complete the proof, it is enough to check that
\[
\sup_{s\in[T_\varepsilon,T_\varepsilon+2\varepsilon/|r|]} \frac{\log(1+Z^K_{s\log K})}{\log K}\leq \frac{2d}{|r|}\,\varepsilon.
\]
For this, we observe that
the maximal size on the
time interval $\log K\times[T_\varepsilon,T_\varepsilon+2\varepsilon/|r|]$ of the families stemming from each individual alive at time $T_\varepsilon\log K$  is bounded by the size at time
$\frac{2\varepsilon\log K}{|r|}$ of a Yule process with birth rate $b$, i.e.\ a geometric random variable $G_i$ with expectation
$K^{2b\varepsilon/|r|}$, independently for each immigrant families. Hence, with probability converging to 1,
  \begin{align*}
    \sup_{t\in [T_\varepsilon,T_\varepsilon+2\varepsilon/|r|]} Z^K_{t\log K}\leq\sum_{i=1}^{2K^\varepsilon} G_i\leq K^{\frac{2d}{|r|}\varepsilon}.
  \end{align*}
The proof is completed.
\end{proof}

\section{Branching process with immigration}
\label{sec:BPI}

Our goal in this section is to extend the arguments of the previous section to include immigration. 

We consider a linear birth and death process with immigration $(Z^K_{t}, t\geq 0)$ with law $BPI_K(b,d,a,c,\beta)$, where
$\
Z^K_{0} = \lfloor K^\beta-1\rfloor\
$
with $\beta\geq 0$, $b\geq 0$ is the individual birth rate, $d\geq 0$ the individual death rate and $K^{c}e^{as}$ is
the immigration rate at time $s\geq 0$ with $a,c\in\RR$.

With these notations, the generator of $Z^K$ at time $t\geq 0$ is given for bounded measurable functions $f$ by
\[
L_t f(n) = (bn +
K^{c}e^{at})\,(f(n+1) - f(n)) + dn\,(f(n-1) - f(n)).
\]
Recall that $r=b-d$. {We start with a first result in the case where the population does not go extinct (Theorem \ref{lem:large-popu}) and a series of lemmas. Our
  general result on branching processes with immigration is Theorem~\ref{thm:BPI-general}, whose proof combines similar ideas as for Lemma~\ref{lem:BP}.}

\begin{thm}[Large population]\label{lem:large-popu}
  Assume that $c\leq \beta$, $\beta>0$. Then, for all $T>0$ such that
  \begin{equation}\label{cond:BPI-large-popu}
    \inf_{t\in[0,T]}(\beta+rt)\vee(c+at)>0,
  \end{equation}
  the process $\bigg(\frac{\log (1+ Z^K_{s\log K})}{\log K}, 0\leq s\leq T\bigg)$ converges when $K$ tends to infinity in
  probability in $L^\infty([0,T])$ to $\Big((\beta+rs)\vee(c+as),s\in[0,T]\Big)$.
\end{thm}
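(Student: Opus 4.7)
The proof adapts the Doob/martingale argument of Lemma~\ref{lem:BP} to handle the time-inhomogeneous immigration term.

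The first step is to compute the mean. Decompose
\begin{equation*}
Z^K_t = \lfloor K^\beta-1\rfloor + M^K_t + \int_0^t (rZ^K_u + K^c e^{au})\,du
\end{equation*}
with $M^K$ a square-integrable martingale of predictable bracket $\int_0^t((b+d)Z^K_u + K^c e^{au})\,du$. The mean $m^K(t) := \mathbb{E}(Z^K_t)$ solves a linear ODE and is given explicitly by $m^K(t) = \lfloor K^\beta-1\rfloor e^{rt} + K^c\int_0^t e^{r(t-u)+au}\,du$. A case analysis on the signs of $r$ and $a-r$ (with an extra logarithmic factor in the degenerate case $a=r$) yields
\begin{equation*}
\frac{\log m^K(s\log K)}{\log K}\xrightarrow[K\to\infty]{}\mu(s) := (\beta+rs)\vee(c+as)
\end{equation*}
uniformly in $s\in[0,T]$, the limit being positive by \eqref{cond:BPI-large-popu}.

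The second step controls the fluctuations. Applying It\^{o}'s formula to $e^{-rt}Z^K_t$ yields the identity $Z^K_t - m^K(t) = e^{rt}\widehat M^K_t$, where $\widehat M^K_t := \int_0^t e^{-ru}\,dM^K_u$ is a square-integrable martingale with $\mathbb{E}\langle\widehat M^K\rangle_t = \int_0^t e^{-2ru}((b+d)m^K(u)+K^c e^{au})\,du$. Substituting the explicit form of $m^K(u)$ and integrating (another case analysis on the signs of $r$, $a-r$ and $a-2r$), one obtains $\mathrm{Var}(Z^K_{s\log K}) = o\bigl(m^K(s\log K)^2\bigr)$ uniformly in $s\in[0,T]$. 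This reflects the branching heuristic that the standard deviation is of order $\sqrt{m^K(s\log K)}$ up to logarithmic factors, hence negligible compared with the mean since $m^K(s\log K)$ grows polynomially in $K$ by Step 1.

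The third step passes from pointwise to uniform control. Chebyshev's inequality already gives, for each fixed $s\in[0,T]$, that $|Z^K_{s\log K}-m^K(s\log K)|/m^K(s\log K)\to 0$ in probability. To upgrade to uniformity in $s$, I would discretize $[0,T]$ on a grid of size $N=(\log K)^3$, apply the pointwise estimate together with a union bound on the grid, and fill in the gaps either by a small-oscillation argument based on a Poisson-process bound for the jump count of $Z^K$ in each subinterval, or equivalently by a second Doob $L^2$ inequality applied to the martingale increments $\widehat M^K_{s\log K}-\widehat M^K_{s_k\log K}$. Once $|Z^K_{s\log K}/m^K(s\log K)-1|\to 0$ uniformly in $s$, the algebraic manipulation of $\log(1+\cdot)/\log K$ carried out in Step~2 of Lemma~\ref{lem:BP}'s proof transfers this into uniform convergence of $\log(1+Z^K_{s\log K})/\log K$ to $\mu(s)$.

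\textbf{Main obstacle.} The delicate step is precisely the passage from pointwise to uniform control. A brute application of Doob's $L^2$ inequality to $\widehat M^K$ bounds $\sup_{t\leq T\log K}|\widehat M^K_t|$ in terms of the \emph{final} bracket $\langle\widehat M^K\rangle_{T\log K}$, which, after rescaling by $e^{rs\log K}$, grossly overestimates the fluctuation of $Z^K_{s\log K}$ at small times $s$ where $\mu(s)$ is much smaller than $\mu(T)$ (typically when $r<0$ and $s$ is near $0$). The discretization argument circumvents this by using local fluctuation estimates of the correct pointwise order. Apart from this, the proof reduces to lengthy but mechanical bookkeeping across the various sign cases of $r$, $a$, $a-r$ and $a-2r$, plus the degenerate boundary cases ($a=r$, $a=2r$, $r=0$) where extra logarithmic factors appear.
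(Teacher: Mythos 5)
Your plan is correct but takes a genuinely different route from the paper. Both proofs compute $m^K(t)=\mathbb{E}(Z^K_t)$ and the variance explicitly and rely on the martingale $\widehat M^K_t=\int_0^t e^{-ru}\,dM^K_u$; where you diverge is in the passage from the pointwise variance estimate to uniform control, which you rightly flag as the crux. The paper handles it by applying Doob's $L^2$ inequality globally on $[0,T\log K]$ to $\widehat M^K$ (which works when one can pick $\eta$ with $\tfrac12\big(\beta\vee(\beta-rT)\vee(c+(a-2r)T)\big)<\eta<\beta$), supplemented when $a>r$ by the maximal inequality for supermartingales of Dellacherie--Meyer applied to $e^{-(a-r)t}\widehat M^K_t$ (this targets exactly the regime where the Doob bound, rescaled by $e^{rs\log K}$, overshoots at some times), and a final gluing step via the Markov property at an intermediate time $T_1\in(t^*,T^*)$. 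Your alternative — pointwise Chebyshev (valid since, as you say, $\mathrm{Var}(Z^K_{s\log K})=o\big(m^K(s\log K)^2\big)$ with a \emph{polynomial} margin in $K$ uniformly on $[0,T]$ thanks to \eqref{cond:BPI-large-popu} and $c\le\beta$), a union bound over a polylogarithmic grid, and a local Doob/oscillation bound on each subinterval — avoids the case splitting between Doob and the maximal inequality and avoids the gluing step entirely, at the cost of a somewhat more brute-force discretization. Both are viable; yours is more uniform across the parameter cases, while the paper's keeps the argument at the level of a single maximal inequality per case. Two small caveats on your write-up: first, the heuristic that the standard deviation is "of order $\sqrt{m^K(s\log K)}$" is not literally true (for instance when $r>0$ and $a\le 2r$, $\sqrt{\mathrm{Var}}\sim m^K K^{-\beta/2}$), though the conclusion $\mathrm{Var}=o\big((m^K)^2\big)$ is what matters and does hold; second, the parenthetical "(typically when $r<0$ and $s$ is near $0$)" as an explanation of why $\mu(s)\ll\mu(T)$ is backwards when $r<0$ and $a\le r$ (there $\mu$ is \emph{decreasing}), though your broader diagnosis of where the brute Doob bound overestimates is sound. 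The gap-filling oscillation estimate is only sketched, but the local Doob argument you outline does close it: the bracket increment of $\widehat M^K$ over a subinterval of $s$-length $T/(\log K)^3$ yields, after multiplication by $e^{rs\log K}$, a deviation of order $K^{\mu(s_k)/2+o(1)}\ll K^{\mu(s)}$ uniformly, since $\mu$ is bounded away from $0$ on $[0,T]$.
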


In the proofs of the main results of~\cite{durrettmayberry} and~\cite{boviercoquillesmadi}, similar asymptotic results on branching
processes with immigration were proved. Our framework is more general since we consider cases where immigration is time-dependent
($a\neq 0$, contrary to~\cite{boviercoquillesmadi}) and where growth can be driven by immigration ($a>r$, contrary
to~\cite{durrettmayberry}). The results of these references are based on Doob's inequality. In our general case, this is not
sufficient: we also need to make use of the maximal inequality for supermartingales~\cite[Ch. VI, p.\ 72]{dellacherie-meyer} in the
case $a>r$.

To motivate this result, let us first compute the expectation and variance of a $BPI_K(b,d,a,c,\beta)$ process. Note that this result
is valid also if $c>\beta$.

\begin{lem}
  \label{lem:esp-var}
  Assume that $(Z^K_t,t\geq 0)$ is a $BPI_K(b,d,a,c,\beta)$ process. Then, for all $t\geq 0$,
  \begin{equation}
    \label{eq:esperance}
    x^K_t=\E(Z^K_t)=
    \begin{cases}
      \left(K^\beta-1 +\frac{K^c}{r-a}\right)\,e^{rt} -\frac{K^{c}e^{at}}{r-a} & \text{ if }r\neq a, \\
      e^{rt}(K^\beta-1+K^c t) &\text{ if }r=a
    \end{cases}
  \end{equation}
  and
  \begin{equation}
    \label{eq:variance}
    \textnormal{Var}(Z^K_{t}) =
    \begin{cases}
      (b+d)\left(K^\beta-1+\frac{K^c}{r-a}\right)\frac{e^{2rt}
        -e^{rt}}{r}+K^c\left(1-\frac{b+d}{r-a}\right)\frac{e^{at}-e^{2rt}}{a-2r} & \text{ if }r\neq a, \\
      (b+d)(K^\beta-1)\frac{e^{2rt}
        -e^{rt}}{r}+K^c\frac{e^{2rt}-e^{rt}}{r}+(b+d)K^c\frac{e^{2rt}-e^{rt}-rte^{rt}}{r^2}      &\text{ if }r=a\neq 0, \\
      (K^c+2b (K^\beta-1)) t+b K^c t^2 &\text{ if }r=a=0 \quad (b=d),
    \end{cases}
  \end{equation}
  where, in the first line, by convention $\frac{e^{2rt} -e^{rt}}{r}=t$ if $r=0$ and $\frac{e^{at}-e^{2rt}}{a-2r}=t$ if $a=2r$.
\end{lem}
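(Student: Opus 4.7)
The plan is to derive linear ODEs for the mean and variance of $Z^K$ using Dynkin's formula, solve them, and then handle the degenerate cases. First, applying the generator $L_t$ to $f(n)=n$, or equivalently reading off the jump rates ($+1$ at rate $bn+K^c e^{at}$, $-1$ at rate $dn$), yields the semimartingale decomposition
\begin{equation*}
Z^K_t = (K^\beta-1) + \int_0^t \big(r Z^K_s + K^c e^{as}\big)\,ds + M^K_t,
\end{equation*}
where $M^K$ is a square-integrable martingale with predictable quadratic variation
\begin{equation*}
\langle M^K\rangle_t = \int_0^t \big((b+d)Z^K_s + K^c e^{as}\big)\,ds,
\end{equation*}
obtained similarly by applying $L_t$ to $f(n)=n^2$. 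Taking expectations in the first identity gives the linear ODE $\dot x^K_t = r x^K_t + K^c e^{at}$ with $x^K_0 = K^\beta-1$, whose solution by variation of constants is $x^K_t = e^{rt}(K^\beta-1) + K^c e^{rt}\int_0^t e^{(a-r)s}\,ds$. Evaluating the integral in the two cases $a\neq r$ and $a=r$ yields \eqref{eq:esperance}.

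Next, setting $Y^K_t := Z^K_t - x^K_t$, a direct subtraction gives $Y^K_t = M^K_t + r\int_0^t Y^K_s\,ds$. Applying Itô's formula for pure-jump semimartingales,
\begin{equation*}
(Y^K_t)^2 = 2\int_0^t Y^K_{s-}\,dM^K_s + 2r\int_0^t (Y^K_s)^2\,ds + [M^K,M^K]_t,
\end{equation*}
and taking expectations (after the usual localisation, justified by the fact that $Z^K_s$ has finite moments on bounded time intervals) yields the linear ODE
\begin{equation*}
\frac{d}{dt}\Var(Z^K_t) = 2r\,\Var(Z^K_t) + (b+d)\,x^K_t + K^c e^{at},\qquad \Var(Z^K_0) = 0,
\end{equation*}
whose solution is
\begin{equation*}
\Var(Z^K_t) = \int_0^t e^{2r(t-s)}\big[(b+d)\,x^K_s + K^c e^{as}\big]\,ds.
\end{equation*}

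It remains to substitute the explicit expression of $x^K_s$ from \eqref{eq:esperance} and integrate the resulting exponentials. The three lines of \eqref{eq:variance} correspond respectively to the cases $r\neq a$, $r=a\neq 0$, and $r=a=0$ (which forces $b=d$), each requiring only the elementary primitives $\int_0^t e^{\lambda s}\,ds$ and $\int_0^t s\,e^{\lambda s}\,ds$ and simple rearrangement of the resulting coefficients. The degenerate sub-cases $r=0$ (in the first line) and $a=2r$ are handled by the stated conventions, which are precisely the pointwise limits of the generic formulas. This is essentially bookkeeping; the only mild subtlety is the careful enumeration of the several parameter regimes, and no genuine obstacle arises.
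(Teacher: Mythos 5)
Your proposal is correct and follows essentially the same route as the paper: semimartingale decomposition of $Z^K$, the linear ODE for $x^K_t$, and then Itô's formula applied to a squared process to obtain a linear ODE whose solution gives the variance. The only cosmetic difference is that you work directly with the centered process $Y^K_t = Z^K_t - x^K_t$ to get $\dot{\Var}(Z^K_t)=2r\Var(Z^K_t)+(b+d)x^K_t+K^c e^{at}$, whereas the paper derives the ODE for the second moment $u(t)=\E((Z^K_t)^2)$ and then subtracts $x^K_t{}^2$ — these are trivially equivalent.
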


\begin{proof}
  The semimartingale decomposition of $Z^K$ is given by
  \begin{equation}
    \label{eq:BPI-martpb}
    Z^K_{t}= Z^K_{0}+ \int_{0}^t \big(rZ^K_{s} +K^{c}e^{as}\big) ds + M^K_{t},
  \end{equation}
  where $M^K$ is a square integrable martingale with quadratic variation
  \begin{equation}
    \label{eq:BPI-crochet-martpb}
    \langle M^K\rangle_{t}=\int_{0}^t\big((b+d)Z^K_{s} +K^{c}e^{as} \big) ds.
  \end{equation}
  By a usual argument, the expectation $x(t) =\E( Z^K_{t})$ solves the linear equation
  \[
  \dot{x}(t) = r x(t) + K^{c}e^{at} \ ;\ x(0)=K^\beta-1
  \]
  whose solution is given by~\eqref{eq:esperance}.

  It\^o's formula applied to $(Z^K_{t})^2$ yields that $u(t) =\E( (Z^K_{t})^2)$ is solution to
  \ben
  \dot{u}(t)= 2r u(t) + (2 K^{c}e^{at} + b+d)\,x(t) + K^{c}e^{at} \ ;\ u(0) = (K^{\beta}-1)^2.\een
  Straightforward computation in each case gives~\eqref{eq:variance}.
\end{proof}

We deduce from the last lemma that
\[
x^K_{t\log K}=
\begin{cases}
  K^{rt}(K^\beta-1) +\frac{K^{c+rt}-K^{c+at}}{r-a} & \text{ if }r\neq a, \\
  K^{rt}(K^\beta-1)+K^{c+rt}t \log K  &\text{ if }r=a,
\end{cases}
\]
so that,
\[
\frac{\log(1+x^K_{t\log K})}{\log K}\sim
\begin{cases}
  (\beta+rt)\vee(c+rt)\vee(c+at) & \text{if }\beta>0 \\
  (c+rt)\vee(c+at) & \text{if }\beta=0
\end{cases}
\]
as $K\rightarrow+\infty$. Note that the expression~\eqref{eq:variance} in the case $r\neq a$ can be written as
\begin{align}
  \label{eq:variance-positive}
  \text{Var}(Z^K_{t\log K}) = (b+d) (K^\beta-1)\varphi_t(r)+K^c\varphi_t(a)+(b+d)K^c\frac{\varphi_t(r)-\varphi_t(a)}{r-a}
\end{align}
where
\begin{align*}
  \varphi_t(x)=\frac{K^{xt}-K^{2r t}}{x-2r}.
\end{align*}
Since $\varphi_t$ is nonnegative and nondecreasing for all $t\geq 0$, the three terms in the right-hand side
of~\eqref{eq:variance-positive} are positive, so there is no compensation between these terms and hence
\begin{align*}
  \text{Var}(Z^K_{t\log K})\sim
  \begin{cases}
    C K^{\beta+(2r\vee r)t}\vee K^{c+(2r\vee r\vee a)t} & \text{if }\beta>0, \\
    C K^{c+(2r\vee r\vee a)t} & \text{if }\beta=0.
  \end{cases}
\end{align*}
for a constant $C$ that may depend on $t$ but remains uniformly bounded and bounded away from 0 for bounded values of
$t\in\mathbb{R}_+$.

In several cases, we can compare the process with its expectation because the standard deviation $\text{Var}(Z^K_{t\log K})^{1/2}$ is negligible
compared with $x^K_{t\log K}$. One can easily check that this is always the case when $t>0$ and $\beta>0$ and $r\geq 0$ or when
 $\beta\geq 0$, $r\leq 0$ and $(\beta+rt)\vee(c+(a\vee r)t)>0$ or when  $\beta=0$, $r>0$ and $c>0$.

The proof of Theorem~\ref{lem:large-popu} is based on martingales inequalities.
We start by defining a martingale and computing its quadratic variation.

\begin{lem}
  \label{lem:crochet}
  The process
  \begin{equation}
    \widetilde{M}^K_t:= e^{-rt}\big(Z^K_{t}-x^K_t\big)
  \end{equation}
  is a martingale whose predictable quadratic variation process satisfies
  \begin{align*}
  \E\big(\langle \widetilde M^K\rangle_{T\log K}\big) & =K^{c}\frac{K^{(a-2r)T}-1}{a-2r}+ (b+d)(K^{\beta}-1) \frac{1-K^{-rT}}{r}\\
 & +(b+d)\times \begin{cases}\frac{K^c}{a-r}\left(\frac{K^{(a-2r)T}-1}{a-2r}-\frac{1-K^{-rT}}{r}\right) & \mbox{if }\ r\not=a\\
 K^c \frac{1-K^{-rT}}{r^2} - K^c \frac{T\log( K) K^{-rT}}{r}& \mbox{if }\ r=a.
\end{cases}
  \end{align*}
\end{lem}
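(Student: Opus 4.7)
The plan is to combine the semimartingale decomposition \eqref{eq:BPI-martpb}--\eqref{eq:BPI-crochet-martpb} of $Z^K$ with the ODE solved by $x^K_t$, apply the integration-by-parts formula to write $\widetilde M^K$ as a stochastic integral against $M^K$, and then take expectation in the resulting predictable bracket using the explicit formula for $x^K$ from Lemma~\ref{lem:esp-var}.

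First, I would note that $Y^K_t := Z^K_t - x^K_t$ satisfies
\[
Y^K_t = M^K_t + \int_0^t r\,(Z^K_s - x^K_s)\,ds = M^K_t + \int_0^t r\,Y^K_s\,ds,
\]
since the deterministic drift $rZ^K_s + K^c e^{as}$ in \eqref{eq:BPI-martpb} and the ODE $\dot x^K(s) = r x^K(s) + K^c e^{as}$ cancel the immigration term. Because $e^{-rt}$ is a bounded deterministic function of finite variation, the usual product rule (or Itô's formula) gives
\[
d\bigl(e^{-rt} Y^K_t\bigr) = -r e^{-rt} Y^K_t\,dt + e^{-rt}\,dY^K_t = e^{-rt}\,dM^K_t,
\]
so $\widetilde M^K_t = \int_0^t e^{-rs}\,dM^K_s$ is a local martingale, with deterministic bounded integrand against a square-integrable martingale; it is therefore a true square-integrable martingale on every finite interval.

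Next, since the integrand is predictable and bounded, the predictable bracket is
\[
\langle \widetilde M^K\rangle_t = \int_0^t e^{-2rs}\,d\langle M^K\rangle_s = \int_0^t e^{-2rs}\bigl((b+d)Z^K_s + K^c e^{as}\bigr)\,ds.
\]
Taking expectations and using Fubini together with $\mathbb{E}(Z^K_s) = x^K_s$ from Lemma~\ref{lem:esp-var} gives
\[
\mathbb{E}\bigl(\langle \widetilde M^K\rangle_{T\log K}\bigr) = K^c\int_0^{T\log K} e^{(a-2r)s}\,ds + (b+d)\int_0^{T\log K} e^{-2rs}\,x^K_s\,ds.
\]
The first integral evaluates immediately to $K^c \cdot \frac{K^{(a-2r)T}-1}{a-2r}$, which is the first term in the announced formula.

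Finally, I would substitute the explicit expression for $x^K_s$ from \eqref{eq:esperance}. In the case $r\neq a$, $e^{-2rs} x^K_s$ splits as $\bigl(K^\beta -1 + \tfrac{K^c}{r-a}\bigr)e^{-rs} - \tfrac{K^c}{r-a} e^{(a-2r)s}$, and integrating termwise on $[0,T\log K]$ produces $(K^\beta-1)\tfrac{1-K^{-rT}}{r}$ plus, after using $\tfrac{1}{r-a} = -\tfrac{1}{a-r}$, the claimed combination $\tfrac{K^c}{a-r}\bigl(\tfrac{K^{(a-2r)T}-1}{a-2r} - \tfrac{1-K^{-rT}}{r}\bigr)$. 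In the degenerate case $r=a$, $e^{-2rs} x^K_s = e^{-rs}(K^\beta - 1 + K^c s)$; the $s e^{-rs}$ term is integrated by parts, yielding $\tfrac{1-K^{-rT}}{r^2} - \tfrac{T\log K\cdot K^{-rT}}{r}$. Combining these pieces gives the two branches of the formula.

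The step requiring the most care is the final bookkeeping of the two case distinctions, in particular checking that the term $K^c\tfrac{K^{(a-2r)T}-1}{a-2r}$ is handled consistently when $r=a$ (where it must be read as $K^c\tfrac{1-K^{-rT}}{r}$ via $\lim_{x\to 0}\tfrac{K^{xT}-1}{x} = T\log K$ is not what happens here; rather, one uses $a-2r = -r$ directly). No deep estimates are involved, only elementary integration and the martingale property of $\widetilde M^K$.
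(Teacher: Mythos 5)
Your proof is correct and follows essentially the same route as the paper: derive $\widetilde M^K_t=\int_0^t e^{-rs}\,dM^K_s$ from the semimartingale decomposition \eqref{eq:BPI-martpb}, compute $\langle\widetilde M^K\rangle_t=\int_0^t e^{-2rs}\bigl((b+d)Z^K_s+K^ce^{as}\bigr)\,ds$, take expectations via $\mathbb{E}(Z^K_s)=x^K_s$, and integrate the explicit expression \eqref{eq:esperance} in the two cases. The only difference is cosmetic: you spell out the Itô/product-rule step that the paper leaves implicit.
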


\begin{proof}
  Recall the semi-martingale decomposition of $(Z^K_t)_{t\in \R_+}$ in \eqref{eq:BPI-martpb} where the bracket of the martingale part
  is given in \eqref{eq:BPI-crochet-martpb}. It follows that $\widetilde{M}^K_t$ is a martingale with predictable quadratic
  variation process
  \begin{equation}
    \langle \widetilde{M}^K\rangle_t=\int_0^t e^{-2rs}\big((b+d)Z^K_s+K^{c}e^{as} \big) ds.
  \end{equation}
  Using \eqref{eq:esperance}, we have that
  \begin{align*}
    \int_0^{T\log K} e^{-2rs}x^K_s \ ds= &
    \begin{cases}
      (K^\beta-1) \frac{1-K^{-rT}}{r}+\frac{K^c}{a-r}\left(\frac{K^{(a-2r)T}-1}{a-2r}-\frac{1-K^{-rT}}{r}\right) & \mbox{if }\ r\not=a\\
      (K^\beta-1) \frac{1-K^{-rT}}{r}+ K^c \frac{1-K^{-rT}}{r^2} - K^c \frac{T\log( K) K^{-rT}}{r}& \mbox{if }\ r=a,
    \end{cases}
  \end{align*}
  and Lemma~\ref{lem:crochet} follows.
\end{proof}

\noindent {\bf Proof of Theorem~\ref{lem:large-popu} \phantom{9}}

Let us denote $\bar{\beta}_t = (\beta+rt)\vee(c+at)$. The proof extends ideas used to prove Lemma \ref{lem:BP}. We give the proof in
the case where $r\neq a$, $r\neq 0$ and $a\neq 2r$. The extension to the
other cases can be easily deduced using comparison arguments.

We shall use two different martingale inequalities applied to $\widetilde{M}^K_t$. The first one is Doob's inequality: for any
$0<\eta<\beta$,
\begin{align}
\P\big(\sup_{t\leq T \log K} \big|e^{-r t}(Z^K_t-x^K_t)\big|\geq K^{\eta}\big)
= & \P\big(\sup_{t\leq T\log K}
|\widetilde{M}^K_t|\geq K^{\eta}\big) \notag \\
  \leq  & 4 K^{-2\eta} \E\big(\langle M^K\rangle_{T\log K}\big) \notag \\
  \leq & C K^{-2\eta}\times
  \begin{cases}
    K^{\beta\vee (\beta-rT)\vee(c+(a-2r)T)} & \mbox{if }\ r\not=a\\
    K^{\beta\vee (\beta-rT)}\log K & \mbox{if }\ r=a,
  \end{cases}
\label{etape2}
\end{align}
where we used the fact that $c\leq \beta$ in the last inequality. 
When $a> r$, we also apply the maximal inequality of~\cite[Ch. VI, p.\ 72]{dellacherie-meyer} to the supermartingale
$e^{-(a-r)t}\widetilde{M}^K_t$:
\begin{align*}
  K^\eta\,\PP\big(\sup_{t\leq T \log K} e^{-at}|Z^K_t-x^K_t|\geq K^{\eta}\big) & =K^\eta\,\PP\big(\sup_{t\leq T \log K}
  e^{-(a-r)t}|\widetilde{M}^K_t|\geq K^{\eta}\big) \\ & \leq 3 \sup_{t\leq T\log K} e^{-(a-r)t}\left(\E\langle\widetilde{M}^K_{t}\rangle\right)^{1/2}
\end{align*}
Using Lemma~\ref{lem:crochet}, we deduce
\begin{equation}
  \PP\big(\sup_{t\leq T \log K} e^{-at}|Z^K_t-x^K_t|\geq K^{\eta}\big) \leq 3 C K^{-\eta}\sup_{t\leq T}
    K^{-(a-r)t+\frac{\beta\vee (\beta-rt)\vee(c+(a-2r)t)}{2}} \label{eq:ineg-max}
\end{equation}

\medskip

\noindent{\bf Step 1: Doob's inequality.}
Fix $T$ satisfying~\eqref{cond:BPI-large-popu}. We consider first the case where we can find $\eta$ such that
\begin{equation}
\label{eq:condition-eta}
\frac{\beta\vee (\beta-rT)\vee(c+(a-2r)T)}{2}<\eta<\beta.
\end{equation}
Then, it follows from~\eqref{etape2} that the probability of the event $\Omega^K_1$ converges to 1, where
$$\Omega^K_1=\big\{\sup_{t\leq T \log K} \big|e^{-rt}(Z^K_t-x^K_t)\big|\leq K^{\eta}\big\}.$$
On this event, a similar computation as in Step 2 of Lemma~\ref{lem:BP} entails
\[
\sup_{t\leq T}\left|\frac{\log(1+Z^K_{t\log K})}{\log K}- \bar{\beta}_t\right|
\leq  \sup_{t\leq T}\left[\frac{1}{\log K} \frac{K^{-rt}|Z^K_{t\log K}-x^K_{t\log K}|}{K^{-rt}\left(x^K_{t\log K}\wedge Z^K_{t\log K}\right)}+\frac{\log\big(K^{-\bar{\beta}_t}(1+x^K_{t\log K})\big)}{\log K}\right].
\]
Because of Lemma~\ref{lem:esp-var}, we observe that $\bar{\beta}_t$ has been chosen such that, for all $t\leq T$,
\begin{equation}
  \label{eq:encadre-x}
  C^{-1}K^{\bar{\beta}_t}\leq x^K_{t\log K}\leq C K^{\bar{\beta}_t}
\end{equation}
for some constant $C>0$. Hence,
\[
\log\big(K^{-\bar{\beta}_t}(1+x^K_{t\log K})\big)\leq \log(C+K^{-\bar{\beta}_t})\leq C',
\]
where the last inequality follows from the assumption that $\inf_{t\leq T}\bar{\beta}_t>0$.

Hence, on the event $\Omega^K_1$,
\begin{align*}
\sup_{t\leq T}\left|\frac{\log(1+Z^K_{t\log K})}{\log K}- \bar{\beta}_t\right| &
\leq  \left[\frac{K^\eta}{\log K} \sup_{t\leq T}\frac{K^{rt}}{\left(x^K_{t\log K}-K^{\eta+rt}\right)_+}+\frac{C'}{\log K}\right] \\ &
\leq  \left[\frac{K^\eta}{\log K} \sup_{t\leq T}\frac{2K^{rt}}{x^K_{t\log K}}+\frac{C'}{\log K}\right], 
\end{align*}
where we used~\eqref{eq:encadre-x} and the fact that $K^{\eta+rt}=o(x^K_{t\log K})$, since $\eta<\beta$. Hence,
\begin{align}
  \sup_{t\leq T}\left|\frac{\log(1+Z^K_{t\log K})}{\log K}- \bar{\beta}_t\right| & \leq C\left[\frac{K^\eta}{\log K}
    K^{-[\beta\vee\inf_{t\leq T}(c+(a-r)t)]}+\frac{1}{\log K}\right] \notag \\ &
    \leq
  C\left[\frac{K^{\eta-\beta}}{\log K}+\frac{1}{\log K}\right]. \label{etape3}
\end{align}
\medskip

\noindent \textbf{Case 1(a): $r\geq 0$ and $a\leq 2r$.} In this case, the constraint~\eqref{eq:condition-eta} becomes
$\beta/2<\eta<\beta$, hence we can choose $\eta=3\beta/4$ for any value of $T$ (note that~\eqref{cond:BPI-large-popu} is always
satisfied here). Hence~\eqref{etape3} implies that $\sup_{t\leq T}\left|\frac{\log(1+Z^K_{t\log K})}{\log K}- \bar{\beta}_t\right|$
converges to 0 on the event $\Omega^K_1$.
\medskip

\noindent \textbf{Case 1(b): $r< 0$ and $a\leq r$.}  In this case, Assumption~\eqref{cond:BPI-large-popu} is equivalent to $\inf_{t\leq
  T}(\beta+rt)>0$, i.e.\ $T<\beta/|r|$. Since $c+(a-2r)T\leq \beta-rT+(a-r)T\leq \beta-rT<2\beta$, it is possible to find $\eta$ such
that
\[
\frac{\beta\vee (\beta-rT)\vee(c-(r\wedge (2r-a))T)}{2}=\frac{\beta-rT}{2}<\eta<\beta
\]
and~\eqref{etape3} allows again to conclude.
\medskip

\noindent \textbf{Case 1(c): $r\geq 0$ and $a> 2r$.} In this case,~\eqref{eq:condition-eta} is satisfied provided $T$ is such that
$c+(a-2r)T<2\beta$, i.e.\ $T<T^*:=\frac{2\beta-c}{a-2r}$. Let us observe that the two lines $\beta+rt$ and $c+at$ intersect at time
$t^*=\frac{\beta-c}{a-r}$, and in our case, $T^*>t^*$ since $a\beta>rc$. Therefore, we can apply the computation~\eqref{etape3} to
$T\in(t^*,T^*)$ to obtain the convergence of $\sup_{t\leq T}\left|\frac{\log(1+Z^K_{t\log K})}{\log K}- \bar{\beta}_t\right|$ to 0.
We explain below (in step 3) how to conclude for any value of $T$ satisfying~\eqref{cond:BPI-large-popu}.
\medskip

\noindent \textbf{Case 1(d): $r<0$, $a>r$ and $c+a\beta/|r|\leq 0$.} In this case, $\beta+rt\geq c+at$ for all $t\leq \beta/|r|$ and
hence Assumption~\eqref{cond:BPI-large-popu} is satisfied if and only if $T<\beta/|r|$. For such $T$,~\eqref{eq:condition-eta} is
satisfied since $c+(a-2r)T<c+(a-2r)\frac{\beta}{|r|}=2\beta+c+a\frac{\beta}{|r|}\leq 2\beta$.
\medskip

\noindent \textbf{Case 1(e): $r<0$, $a>r$ and $c+a\beta/|r|>0$.} In this case, Condition~\eqref{eq:condition-eta} is satisfied
provided $T<T^*=\frac{\beta}{|r|}\wedge\frac{2\beta-c}{a-2r}$. Since $a\beta>rc$, we actually have $T^*=\frac{2\beta-c}{a-2r}$. In
addition, if we define $t^*=\frac{\beta-c}{a-r}$ the first time where the line $\beta+rt$ intersects the line $c+at$, we can see
exactly as in Case~1(c) that $T^*>t^*$. Hence, we can apply the computation~\eqref{etape3} to $T\in(t^*,T^*)$ to obtain the
convergence of $\sup_{t\leq T}\left|\frac{\log(1+Z^K_{t\log K})}{\log K}- \bar{\beta}_t\right|$ to 0. We explain below (in step 3)
how to conclude for any value of $T$ satisfying~\eqref{cond:BPI-large-popu}.

\medskip

\noindent \textbf{Step 2: maximal inequality.} We restrict here to the case $\beta=c$ and $a>r$. In this case,
\begin{align*}
  \sup_{t\leq T}\frac{(\beta-2(a-r)t)\vee (\beta-(2a-r)t)\vee(c-at)}{2} & =\sup_{t\leq T}\frac{(\beta-2(a-r)t)\vee(\beta-at)}{2} \\ &
  =\sup_{t\leq T}\frac{\beta-at}{2}=\frac{\beta\vee(\beta-aT)}{2},
\end{align*}
where the second equality comes from the fact that the maximum of $\beta-2(a-r)t$ is attained for $t=0$, and the function $\beta-at$
takes the same value at time $t=0$. Assuming
\begin{equation}
\frac{\beta\vee(\beta-aT)}{2}<\eta<\beta,
\label{eq:condition-eta-2}
\end{equation}
it follows from~\eqref{eq:ineg-max} that the probability of the event $\Omega^K_2$ converges to 1, where
$$\Omega^K_2=\big\{\sup_{t\leq T \log K} \big|e^{-at}(Z^K_t-x^K_t)\big|\leq K^{\eta}\big\}.$$
On this event, a similar computation as in Case 1 entails
\begin{align*}
  \sup_{t\leq T}\left|\frac{\log(1+Z^K_{t\log K})}{\log K}- \bar{\beta}_t\right| & \leq \left[\frac{K^\eta}{\log K} \sup_{t\leq
      T}\frac{K^{at}}{\left(x^K_{t\log K}-K^{\eta+at}\right)_+}+\frac{C'}{\log K}\right] \\ &
  \leq  \left[\frac{K^\eta}{\log K} \sup_{t\leq T}\frac{2K^{at}}{x^K_{t\log K}}+\frac{C'}{\log K}\right] 
  \leq C\left[\frac{K^{\eta-\beta}}{\log K} +\frac{1}{\log K}\right],
\end{align*}
where we used the fact that $K^{\eta+at}=o(K^{\beta+at})$, since $\eta<\beta$ and that $x^K_{t\log K}\geq CK^{\beta+(a\vee r)t}=CK^{\beta+at}$
since $c=\beta$.
\medskip

\noindent \textbf{Case 2(a): $c=\beta$, $a>r$ and $a\geq 0$.} In this case, we can choose any $\eta\in(\beta/2,\beta)$ and deduce the
convergence of $\sup_{t\leq T}\left|\frac{\log(1+Z^K_{t\log K})}{\log K}- \bar{\beta}_t\right|$ to 0.

\medskip

\noindent \textbf{Case 2(b): $c=\beta$, $a>r$ and $a<0$.} In this case, Assumption~\eqref{cond:BPI-large-popu} is equivalent to
$\inf_{t\leq T}(\beta+(a\vee r)t)=\inf_{t\leq T}(\beta+at)>0$, which is satisfied if and only if $T<\beta/|a|$. Now, for such
$T$, one can find $\eta$ satisfying~\eqref{eq:condition-eta-2}, so we can again conclude.

\medskip

\noindent \textbf{Step 3: gluing parts together.} The remaining cases not covered by the previous steps are
\begin{itemize}
\item $r\geq 0$, $a>2r$, $c<\beta$ and $T\geq T^*=\frac{2\beta-c}{a-2r}$;
\item $r< 0$, $a>r$, $c<\beta$, $c+\frac{a\beta}{|r|}$ and $T\geq T^*=\frac{2\beta-c}{a-2r}$.
\end{itemize}
In both cases, we recall that $T^*>t^*$, where $t^*$ is the first time where $\beta+rt$ crosses $c+at$. So we can fix
$T_1\in(t^*,T^*)$ and apply Case 1(c) or Case 1(e) of Step 1 to obtain the convergence in probability in $L^\infty([0,T_1])$ of
$\log(1+Z^K_{t\log K})/\log K$ to $\bar{\beta}_t=(\beta+rt)\vee(c+at)$. In particular, for all $\varepsilon>0$, on an event
$\Omega^K_3$ with probability converging to 1, $K^{c+aT_1-\varepsilon}\leq Z^K_{T_1\log K}\leq K^{c+aT_1+\varepsilon}$ for $K$ large
enough.

Now, on $\Omega^K_3$, standard coupling arguments show that, for all $t\geq 0$, $\hat{Z}^K_t\leq Z^K_{T_1\log K+t}\leq \bar{Z}^K_t$,
where $\hat{Z}^K$ is a $BPI_K(b,d,a,c+aT_1-\varepsilon,c+aT_1-\varepsilon)$ and $\bar{Z}^K$ is a
$BPI_K(b,d,a,c+aT_1+\varepsilon,c+aT_1+\varepsilon)$. Hence, we can apply Case 2(a) or 2(b) of Step 2 to $\hat{Z}^K$ and $\bar{Z}^K$
to obtain the convergence of $\log(1+\hat{Z}^K_{t\log K})/\log K$ to $c-\varepsilon+a(T_1+t)$ and of $\log(1+\bar{Z}^K_{t\log
  K})/\log K$ to $c+\varepsilon+a(T_1+t)$ on $[0,T-T_1]$. Note that, in the case where $a<0$, Assumption~\eqref{cond:BPI-large-popu}
requires that $T<c/|a|$. We can choose $\varepsilon>0$ small enough to have $T<(c-\varepsilon)/|a|$. In this case, we have
$T-T_1<\frac{c+aT_1-\varepsilon}{|a|}$, so we can indeed apply Case 2(b) to $\hat{Z}^K$ on the time interval $[0,T-T_1]$.

Since $\varepsilon>0$ is arbitrary, the Markov property allows to conclude.
\hfill$\Box$\bigskip

Our next goal is to extend Theorem~\ref{lem:large-popu} to the case where $\beta=0$ or $\beta>0$ without
assuming~\eqref{cond:BPI-large-popu} nor $c\leq \beta$.

We first consider the case where $c>\beta$ in Lemma~\ref{lem:initial-growth}. It shows that the population
instantaneously (on the time scale $\log K$) reaches a population size of order $K^c$. We can use the Markov property at time
$\varepsilon\log K$ and comparison techniques in a similar way as in Step 3 of Theorem~\ref{lem:large-popu} to reduce the problem to
the case $c\leq \beta$.

\begin{lem}[Initial growth for strong immigration]\label{lem:initial-growth}
  Assume that $\beta<c$. Then, for all $\varepsilon>0$ and all $\bar{a}>|r|\vee|a|$,
  \[
  \lim_{K\rightarrow+\infty}\P\left(Z^K_{\varepsilon\log K}\in[ K^{c-\bar{a}\varepsilon},K^{c+\bar{a}\varepsilon}]\right)=1.
  \]
\end{lem}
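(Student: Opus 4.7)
The plan is to establish concentration of $Z^K_{\varepsilon\log K}$ around its expectation $x^K_{\varepsilon\log K}$, which is computed in Lemma~\ref{lem:esp-var}. In the generic case $r\neq a$, formula~\eqref{eq:esperance} gives
$$x^K_{\varepsilon\log K} = (K^\beta-1)\,K^{r\varepsilon} + K^c\,\frac{K^{a\varepsilon}-K^{r\varepsilon}}{a-r},$$
where the second summand is always nonnegative (numerator and denominator share their sign). Using $\beta<c$ and $|r|,|a|<\bar a$, a direct estimate shows that $x^K_{\varepsilon\log K}$ is of order $K^{c+(r\vee a)\varepsilon}$, and the exponent $c+(r\vee a)\varepsilon$ lies strictly between $c-\bar a\varepsilon$ and $c+\bar a\varepsilon$, with a gap proportional to $\bar a - (|r|\vee|a|)>0$. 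The degenerate cases $r=a$, $r=0$, $a=2r$ require only minor modifications or a continuity argument.

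To obtain concentration, I would apply Doob's inequality to the martingale $\widetilde M^K_t = e^{-rt}(Z^K_t-x^K_t)$ of Lemma~\ref{lem:crochet}, whose expected quadratic variation at time $\varepsilon\log K$ is, using $\beta<c$, at most of order $K^{c+2\bar a\varepsilon}$. Picking $\eta$ slightly larger than $c/2+\bar a\varepsilon$, Doob's inequality yields
$$|Z^K_{\varepsilon\log K}-x^K_{\varepsilon\log K}|\leq K^{c/2+\bar a\varepsilon+r\varepsilon+o(1)}$$
with probability tending to one. For $\varepsilon$ small enough (relative to $c>0$ and to the gap $\bar a-(|r|\vee|a|)$), this deviation is negligible in front of the mean, and one concludes that $Z^K_{\varepsilon\log K}\in[K^{c-\bar a\varepsilon},K^{c+\bar a\varepsilon}]$ with high probability. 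Alternatively, Chebyshev's inequality applied to the variance bound~\eqref{eq:variance-positive} gives the same conclusion.

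For general $\varepsilon>0$, I would iterate using the Markov property, in the spirit of Step~3 of the proof of Theorem~\ref{lem:large-popu}. Having proved the claim up to time $\varepsilon_0\log K$ for some small $\varepsilon_0>0$, I condition on $Z^K_{\varepsilon_0\log K}\in[K^{c-\bar a\varepsilon_0},K^{c+\bar a\varepsilon_0}]$ and couple the restarted process with two $BPI_K(b,d,a,c+a\varepsilon_0,c\pm\bar a\varepsilon_0)$ processes of deterministic initial values. The upper-bounding process has $\beta'=c+\bar a\varepsilon_0\geq c+a\varepsilon_0=c'$ (since $|a|<\bar a$), and is therefore covered by Theorem~\ref{lem:large-popu}; the lower-bounding process is still in the regime $\beta'<c'$, and Lemma~\ref{lem:initial-growth} is applied inductively, finitely many iterations sufficing to cover $[0,\varepsilon\log K]$. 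The main obstacle is to keep the accumulated multiplicative errors within the target interval; this is achieved by choosing $\varepsilon_0$ small relative to the gap $\bar a-(|r|\vee|a|)$, so that the errors tighten geometrically at each iteration rather than compounding.
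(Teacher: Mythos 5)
Your proposal is correct and takes essentially the same route as the paper: compute the expectation and variance of $Z^K_{\varepsilon\log K}$ from Lemma~\ref{lem:esp-var}, observe that the mean sits well inside the target window $[K^{c-\bar a\varepsilon},K^{c+\bar a\varepsilon}]$, and conclude by Chebyshev's inequality. The Doob alternative, the Markov-property iteration for general $\varepsilon$, and the bound $K^{c+2\bar a\varepsilon}$ on $\E\langle\widetilde M^K\rangle_{\varepsilon\log K}$ (which should really be $K^{c+(|a|+2|r|)\varepsilon}$) are extras the paper omits, and do not change the essence of the argument.
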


\begin{proof}
  We give the proof in the case where $r\neq a$, $r\neq 0$ and $a\neq 2r$. The extension to the other cases is straightforward. Using
  Lemma~\ref{lem:esp-var}, there exists a positive constant $C$ such that
  \[
  C^{-1} K^{c-\varepsilon (|r|\vee|a|)}\leq \E(Z^K_{\varepsilon\log K})\leq C K^{c+\varepsilon (|r|\vee|a|)}
  \]
  and $\text{Var}(Z^K_{\varepsilon\log K})\leq C K^{c+(2|r|\vee|a|)\varepsilon}$. The result follows from Chebyshev's inequality.
\end{proof}

Let us now state our general result with $c\leq\beta$.

\begin{thm}\label{thm:BPI-general}
  Let $(Z^K_{t}, t\geq 0)$ be a $BPI_K(b,d,a,c,\beta)$ process with $c\leq\beta$ and assume either $\beta>0$ or $c\neq 0$. The process $\bigg(\frac{\log (1+ Z^K_{t\log K})}{\log K}, t> 0\bigg)$ converges when $K$ tends to
  infinity in probability in $L^\infty([0,T])$ for all $T>0$ to the continuous deterministic function $\bar{\beta}$ given by
  \begin{description}
  \item[\textmd{(i)}] if $\beta>0$,
    $\ \bar{\beta}\,:\,t\mapsto(\beta+rt)\vee(c+at)\vee 0$;
  \item[\textmd{(ii)}] if $\beta=0$, $c<0$ and $a>0$, $\ \bar{\beta}\,:\,t\mapsto((r\vee a)(t-|c|/a))\vee 0$;
  \item[\textmd{(iii)}] if $\beta=0$, $c<0$ and $a\leq 0$, $\ \bar{\beta}\,:\,t\mapsto 0$.
  \end{description}
  Note that, in case~(i), when $r\geq 0$, one can remove $\vee 0$ from the definition of $\bar{\beta}_t$.

  In addition, in the case where $c\neq 0$ or $a\neq 0$, for all compact interval $I\subset\RR_+$ which does not intersect the
  support of $\bar{\beta}$,
  \begin{equation}
    \label{eq:BPI-extinction}
    \lim_{K\rightarrow+\infty}\P\left(Z^K_{t\log K}=0,\ \forall t\in I\right)=1.
  \end{equation}
\end{thm}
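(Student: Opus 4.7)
The plan is to reduce each of the three cases to Theorem~\ref{lem:large-popu} through the Markov property, together with Lemma~\ref{lem:BP} (for extinction phases) and Yule-process dominations (for uniform control near points where $\bar{\beta}$ touches zero). The overall strategy is to partition $[0,T]$ according to where $\bar{\beta}$ is positive, zero, or has a boundary point, and to treat each region by the appropriate tool, then glue via the Markov property.

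\textbf{Case (iii)} is immediate: the total immigration on $[0,T\log K]$ is Poisson with mean $\int_0^{T\log K}K^ce^{as}\,ds$, which tends to $0$ because $c<0$ and $a\le 0$. Hence with probability tending to $1$, no immigrant ever arrives, so $Z^K_{t\log K}\equiv 0$ on $[0,T]$, yielding $\bar{\beta}\equiv 0$ and the extinction statement~\eqref{eq:BPI-extinction}.

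\textbf{Case (ii)} splits at $t^*:=|c|/a$. Before $(t^*-\varepsilon)\log K$ the mean number of immigrations is $O(K^{-a\varepsilon})\to 0$, so $Z^K$ is identically zero there with high probability and \eqref{eq:BPI-extinction} follows on compact subintervals of $[0,t^*)$. By the moment formulas of Lemma~\ref{lem:esp-var}, at time $(t^*+\varepsilon)\log K$ both $\mathbb{E}(Z^K)$ and $\mathrm{Var}(Z^K)^{1/2}$ are of order $K^{a\varepsilon}$ (up to factors $K^{\pm\eta(\varepsilon)}$ with $\eta(\varepsilon)\to 0$ as $\varepsilon\to 0$), so Chebyshev's inequality concentrates $Z^K_{(t^*+\varepsilon)\log K}$ around $K^{a\varepsilon}$. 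Applying the strong Markov property at that time and Theorem~\ref{lem:large-popu} to the shifted $BPI_K(b,d,a,a\varepsilon,\beta')$ with $\beta'\approx a\varepsilon$ (so that the hypothesis $c'\le\beta'$ holds after an arbitrarily small adjustment), the limit growth is $a\varepsilon+(r\vee a)(t-t^*-\varepsilon)$, which converges to $(r\vee a)(t-t^*)\vee 0$ as $\varepsilon\to 0$.

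\textbf{Case (i)} handles the generic profile with $\beta>0$. The open set $\{t\in[0,T]:\bar{\beta}(t)>0\}$ is a union of open intervals, and on any closed subinterval of it, Theorem~\ref{lem:large-popu} applies directly. On a maximal closed zero-subinterval $[t_1,t_2]$, both $\beta+rt\le 0$ and $c+at\le 0$ on $[t_1,t_2]$. By Lemma~\ref{lem:BP}, the progeny of the $\lfloor K^\beta\rfloor$ initial individuals goes extinct with high probability by time $(\beta/|r|)\log K\le t_1\log K$, and the expected number of immigrants arriving during any compact sub-interval of $(t_1,t_2)$ is $O(K^{-\eta})$ for some strict-interior distance $\eta>0$, so no immigrant arrives there and~\eqref{eq:BPI-extinction} holds. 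If $t_2<T$ (re-emergence), restart by the Case~(ii) argument at $t_2$, using the Markov property once $Z^K$ has reached~$0$.

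\textbf{The main obstacle} is to upgrade convergence in probability at each point to $L^\infty$ convergence across boundary points where $\bar{\beta}$ touches zero, since Theorem~\ref{lem:large-popu}'s relative-error estimate $|Z^K/x^K-1|$ degrades there. The remedy, mirroring Step~3(iii) of Lemma~\ref{lem:BP}, is to dominate the surviving lineages and the arriving immigrants by a pure birth (Yule) process of rate $b$ on a window of length $\varepsilon\log K$ around the boundary: the maximum of that Yule process is bounded by a sum of $O(K^{a\varepsilon}\vee K^{\beta})$ independent geometric variables of mean $K^{b\varepsilon/|r|}$, which yields $\sup_{[t_1-\varepsilon,t_1+\varepsilon]}\log(1+Z^K_{t\log K})/\log K\le C\varepsilon$ with high probability. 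Since $\varepsilon>0$ is arbitrary and the glueing is done finitely many times, letting $\varepsilon\to 0$ concludes the proof.
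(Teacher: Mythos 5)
Your overall strategy---Markov-property gluing, Theorem~\ref{lem:large-popu} on positive-exponent subintervals, Lemma~\ref{lem:BP}-style arguments for local extinction, and Yule dominations for the modulus of continuity at points where $\bar{\beta}$ touches zero---is essentially the same as the paper's, which packages these ideas in the auxiliary Lemmas~\ref{lem:non-emergence}, \ref{lem:emergence}, \ref{lem:exposant-bouge-pas-trop} and~\ref{lem:extinction}. However, two steps of your argument contain genuine gaps.

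In Case~(ii), the Chebyshev concentration claim is wrong when $r>a>0$. Taking $\beta=0$, $c=-|c|$ and $t^*=|c|/a$, the mean from Lemma~\ref{lem:esp-var} at time $(t^*+\varepsilon)\log K$ is
\[
x^K_{(t^*+\varepsilon)\log K}\sim \frac{K^{c+r(t^*+\varepsilon)}}{r-a}=\frac{K^{|c|(r-a)/a+r\varepsilon}}{r-a},
\]
whose exponent contains the fixed positive constant $|c|(r-a)/a$, not a quantity tending to $0$ with $\varepsilon$. The mean is dominated by the rare event that an immigrant arrives near time $0$ and its supercritical family grows at rate $r$; correspondingly, the variance asymptotics $\mathrm{Var}(Z^K_{t\log K})\sim C K^{c+2rt}$ for $\beta=0$ give $\mathrm{Var}/(x^K)^2\sim K^{-c}\to\infty$, so Chebyshev does not concentrate $Z^K$ around $K^{a\varepsilon}$. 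The paper's Lemma~\ref{lem:emergence} circumvents this by thinning to the Poisson number of \emph{surviving} immigrant families for the lower bound, and by Markov's inequality (not Chebyshev's) on the mean for the upper bound, accepting a sandwich $K^{\varepsilon/2}\le Z^K\le K^\eta$ with $\eta>(1\vee\tfrac{2r}{a})\varepsilon$ that closes only in the limit $\varepsilon\to 0$.

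In Case~(i), the extinction statement~\eqref{eq:BPI-extinction} on a zero-interval $[t_1,t_2]$ is not fully established. Lemma~\ref{lem:BP} kills only the descendants of the $\lfloor K^\beta\rfloor$ initial individuals, and ``no new immigrant arrives on compact subintervals of $(t_1,t_2)$'' does not address descendants of immigrants who arrived \emph{before} $t_1\log K$. Since the hypothesis only requires $c\le\beta$, the case $c>0$ is allowed, and then of order $K^{c}$ immigrants arrive by time $t_1\log K$; their subcritical families need not all be extinct exactly at $t_1\log K$. Their gradual death shortly after $t_1$ is precisely what Lemma~\ref{lem:extinction}(ii) in the paper controls, using the survival-probability formula~\eqref{eq:Sylvie} applied to each of the Poisson-many immigrant families. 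Your Yule domination caps the exponent near $t_1$ but does not force $Z^K$ to actually reach zero; an additional Lemma~\ref{lem:extinction}-type argument is needed to conclude both~\eqref{eq:BPI-extinction} and the safe restart at $t_2$ that the remainder of your Case~(i) relies on.
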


The case $\beta=c=0$ could be deduced from the previous result using comparison argument, but is not useful here.\\

\begin{figure}[!ht]
\begin{center}
\begin{tabular}{ccc}
  \unitlength=0.25cm
   \begin{picture}(15,10)
      \put(1,2){\vector(1,0){13}} \put(13,1){time}
      \put(1,0.5){\vector(0,1){9.5}} \put(0,10.3){$\bar{\beta}$}
      \put(0.3,1.5){0}
      \put(0.1,8.5){1}
      \multiput(1,9)(0.4,0){30}{\line(1,0){0.2}}
      \put(1,1){\line(2,1){12}} \put(13,6){$c+at$}
      \put(2,1){\line(1,1){9}} \put(11.2,9.8){$r\big(t-\frac{|c|}{a}\big)$}
      \thicklines
      \put(1,2){\line(1,0){2}}
      \put(3,2){\line(1,1){7}}
      \put(10,9){\line(1,0){4}}
    \end{picture}
 &
\hspace{0.5cm}
  \unitlength=0.25cm
   \begin{picture}(15,10)
      \put(1,2){\vector(1,0){13}} \put(13,1){time}
      \put(1,0.5){\vector(0,1){9.5}} \put(0,10.3){$\bar{\beta}$}
      \put(0.3,1.5){0}
      \put(0.1,8.5){1}
      \multiput(1,9)(0.4,0){30}{\line(1,0){0.2}}
      \put(1,6){\line(2,-1){12}} \put(1.1,3){$c+at$}
      \put(1,8.5){\line(1,-1){8.5}} \put(4.1,6.3){$\beta+rt$}
      \thicklines
      \put(1,8.5){\line(1,-1){5}}
      \put(6,3.5){\line(2,-1){3}}
      \put(9,2){\line(1,0){3}}
    \end{picture}
 &
\hspace{0.5cm}
  \unitlength=0.25cm
   \begin{picture}(15,10)
      \put(1,2){\vector(1,0){13}} \put(13,1){time}
      \put(1,0.5){\vector(0,1){9.5}} \put(0,10.3){$\bar{\beta}$}
      \put(0.3,1.5){0}
      \put(0.3,8.5){1}
      \multiput(1,9)(0.4,0){30}{\line(1,0){0.2}}
      \put(7,1){\line(2,1){6}} \put(13,3){$c+at$}
      \put(1,8){\line(1,-1){7}} \put(3.5,6.5){$\beta+rt$}
      \thicklines
      \put(1,8){\line(1,-1){6}}
      \put(7,2){\line(1,0){2}}
      \put(9,2){\line(2,1){4}}
    \end{picture}
  \\
    (a): $c<\beta=0$, $0<a<r$ & (b): $0<c<\beta$, $r<a<0$ & (c): $c<0<\beta$, $r<0<a$
 \end{tabular}
\caption{{\small \textit{Illustration of Theorem \ref{thm:BPI-general}. (a): Initially $\bar{\beta}=0$, but thanks to immigration, the population is revived. Once this happens, the growth rate $r$ being larger than $a$, immigration have a negligible effect after time $|c|/a$. (b): After time $(\beta-c)/(a-r)$, the dynamics is driven by mutation before getting extinct when $\bar{\beta}_t=0$. (c): We observe a local extinction before the population is revived thanks to incoming mutations.}}}\label{fig:thm-main}
\end{center}
\end{figure}
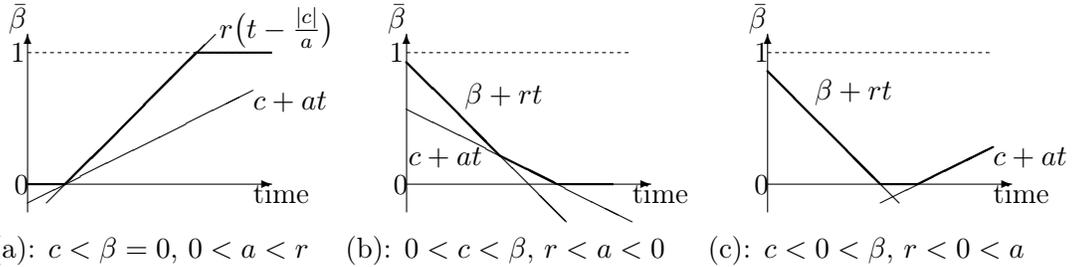

\begin{rem}
\label{remarqueeclairante}
Note that  point (i) in Theorem \ref{thm:BPI-general}  means that, at least for small $t>0$, either   $\, \beta=c$ and then $\overline \beta(t) = \beta+(r\vee a)t\,$
or
$\,\beta>c$ and then $\overline \beta(t) = \beta+rt$.  This explains  Equation~\eqref{rec-sigma} in Corollary~\ref{thm:transfer-main}.

\end{rem}

\begin{proof}
  The proof combines Theorem~\ref{lem:large-popu} with a series of lemmas and extensive use of the Markov property. The proofs of the
  lemmas are given at the end of the section.

  \medskip

  \noindent{\bf Proof of~(iii).}
  Theorem~\ref{thm:BPI-general}~(iii) follows directly from the next lemma. Note that it also proves that~\eqref{eq:BPI-extinction}
  holds true in case~(iii) for all $I\subset [0,T]$.

  \begin{lem}[Non emergence of any new population]\label{lem:non-emergence}
    Assume that $\beta=0$ and $c<0$. Let us consider $T>0$ such that $-|c|+aT<0$. Then:
    \begin{equation}
      \lim_{K\rightarrow +\infty}\P\Big( \ Z^K_t=0, \forall t\leq T\log K\Big)=1.
    \end{equation}
  \end{lem}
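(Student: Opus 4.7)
The plan is to exploit the fact that immigrations occur according to an inhomogeneous Poisson process with rate $K^c e^{as}$ independent of the current population size. Since $Z_0^K = \lfloor K^0 - 1\rfloor = 0$, the branching dynamics cannot produce any individual without an immigration event: if no immigrant arrives on $[0, T\log K]$, then $Z^K_t = 0$ throughout this interval. So it suffices to show that the probability of having at least one immigrant on $[0, T\log K]$ tends to $0$.

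Concretely, let $N_K$ be the number of immigration events in $[0, T\log K]$. Then $N_K$ is Poisson with parameter
\[
\lambda_K = \int_0^{T\log K} K^c e^{as}\,ds.
\]
I would then evaluate $\lambda_K$ in each sign case for $a$. If $a = 0$, $\lambda_K = K^c T\log K$, which tends to $0$ since $c < 0$. If $a < 0$, $\lambda_K \leq K^c/|a|$, which again tends to $0$. If $a > 0$, $\lambda_K = K^c(K^{aT}-1)/a \leq K^{c+aT}/a$, and the hypothesis $-|c| + aT < 0$ (i.e.\ $c + aT < 0$, since $c < 0$) gives $\lambda_K \to 0$.

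Hence $\P(N_K \geq 1) \leq \lambda_K \to 0$, so $\P(N_K = 0) \to 1$, and on this event $Z^K_t = 0$ for all $t \in [0, T\log K]$, which yields the claim. There is no real obstacle here; the only point of care is to check that the three cases for the sign of $a$ are all covered by the hypothesis $-|c| + aT < 0$, which is automatic when $a \leq 0$ (since then $-|c| + aT \leq c < 0$) and is the standing assumption when $a > 0$.
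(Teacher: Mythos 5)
Your proof is correct and takes essentially the same approach as the paper: both observe that $Z^K_0=0$, so it suffices to show that the expected number of immigrants on $[0,T\log K]$ tends to $0$, the paper using the uniform bound $K^{c\vee(c+aT)}$ on the rate and you computing $\lambda_K$ exactly by cases in the sign of $a$.
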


  \medskip

  \noindent{\bf Proof of~(ii).} We need to combine Theorem~\ref{lem:large-popu} and Lemma~\ref{lem:non-emergence} with
  the next two lemmas.

  \begin{lem}[Emergence of a new population]\label{lem:emergence}
    Assume that $\beta=0$, that $c=-\varepsilon$ with $\varepsilon>0$ and that $a>0$ (so that the immigration rate starts being positive at time
    $t_0=\frac{\varepsilon}{a} \log K$). Then, for all $\eta>(1\vee\frac{2r}{a})\varepsilon$,
    \begin{equation}
      \label{eq:lem-emergence}
      \lim_{K\rightarrow +\infty}\P\Big(K^{\varepsilon/2}-1\leq Z^K_{\frac{2\varepsilon}{a} \log K}\leq K^{\eta}-1\Big)=1.
    \end{equation}
  \end{lem}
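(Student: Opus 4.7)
The plan is to compute the first two moments of $Z^K_{t_K}$ at the time $t_K = \frac{2\varepsilon}{a}\log K$ using Lemma~\ref{lem:esp-var}, and to deduce the upper bound in~\eqref{eq:lem-emergence} from Markov's inequality and the lower bound from Chebyshev's inequality. No additional probabilistic input beyond the first- and second-moment identities already established is required, since the inhomogeneous immigration rate is built into those formulas.

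With $\beta=0$ and $c=-\varepsilon$, the mean formula in~\eqref{eq:esperance} gives (for $r\neq a$)
\[
\E(Z^K_{t_K}) \;=\; \frac{K^{-\varepsilon}\bigl(K^{2\varepsilon}-K^{2r\varepsilon/a}\bigr)}{a-r},
\]
which is asymptotic to $\frac{K^\varepsilon}{a-r}$ when $r<a$ and to $\frac{K^{(2r/a-1)\varepsilon}}{r-a}$ when $r>a$. In both cases the exponent is bounded above by $(1\vee \tfrac{2r}{a})\varepsilon$; hence for any $\eta$ strictly exceeding $(1\vee \tfrac{2r}{a})\varepsilon$, Markov's inequality yields $\P(Z^K_{t_K}\geq K^\eta)\leq \E(Z^K_{t_K})/K^\eta\longrightarrow 0$, establishing the upper bound. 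The degenerate case $r=a$, treated via the second line of~\eqref{eq:esperance}, introduces only a $\log K$ factor which is harmless under the strict inequality on $\eta$.

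For the lower bound, a direct inspection of~\eqref{eq:variance} with $\beta=0$ gives $\mathrm{Var}(Z^K_{t_K})\sim C\,K^{-\varepsilon+(2r\vee r\vee a)\cdot 2\varepsilon/a}$, and a short case analysis (separately for $r<0$, $0\leq r\leq a/2$, $a/2<r<a$, and $r>a$, together with the borderline subcases $r=0$, $a=2r$, $r=a$) shows in every regime that
\[
\frac{\mathrm{Var}(Z^K_{t_K})}{\E(Z^K_{t_K})^2}\;\sim\;K^{-\varepsilon}.
\]
Chebyshev's inequality then forces $Z^K_{t_K}\geq \tfrac{1}{2}\E(Z^K_{t_K})$ with probability tending to $1$; since $\E(Z^K_{t_K})\gtrsim K^\varepsilon$, this lies well above $K^{\varepsilon/2}$ for $K$ large, yielding the lower bound.

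The main obstacle is bookkeeping: one must verify that in every parameter regime listed above the variance stays at most of order $\E(Z^K_{t_K})^2\cdot K^{-\varepsilon}$, since the expression~\eqref{eq:variance} contains several nonnegative terms of possibly different magnitudes that must be bounded simultaneously, and one must confirm that the degenerate values $r=0$, $r=a$, $a=2r$ produce no cancellation spoiling the concentration estimate. Once this uniform bound on $\mathrm{Var}/\E^2$ is in hand, the Markov/Chebyshev argument is entirely standard.
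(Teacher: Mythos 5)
The upper bound via Markov's inequality is fine and is exactly what the paper does. The gap is in the lower bound. Your claimed uniform estimate $\mathrm{Var}(Z^K_{t_K})/\E(Z^K_{t_K})^2\sim K^{-\varepsilon}$ is false once $r>\frac{3a}{4}$. Indeed, writing the variance via~\eqref{eq:variance-positive} with $\beta=0$ and evaluating at $t=\frac{2\varepsilon}{a}$ gives $\mathrm{Var}(Z^K_{t_K})\sim C\,K^{-\varepsilon+\frac{2\varepsilon}{a}(2r\vee r\vee a)}$, so for $r>\frac{a}{2}$ this is of order $K^{(\frac{4r}{a}-1)\varepsilon}$. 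Meanwhile for $\frac{a}{2}<r<a$ the mean is still of order $K^{\varepsilon}$ (the $e^{at}$ term in~\eqref{eq:esperance} dominates), giving
\[
\frac{\mathrm{Var}(Z^K_{t_K})}{\E(Z^K_{t_K})^2}\;\asymp\;K^{(\frac{4r}{a}-3)\varepsilon},
\]
which diverges when $r>\frac{3a}{4}$; and for $r>a$ the mean is of order $K^{(\frac{2r}{a}-1)\varepsilon}$, giving a ratio of order $K^{\varepsilon}\to\infty$. So Chebyshev gives nothing in these regimes. This is not a bookkeeping slip but a real phenomenon: when $r$ is large relative to $a$, the population at time $t_K$ is dominated by the descendants of a handful of early immigrants, and for a supercritical branching process the normalized population size has exponential-type fluctuations around its mean. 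No second-moment argument alone will force $Z^K_{t_K}$ away from small values.

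The paper's proof sidesteps this by lower-bounding $Z^K_{t_K}$ not by a multiple of its mean but by the number of immigrant families that arrived in $[0,t_K\log K]$ and still have a surviving descendant at time $t_K\log K$. This count is \emph{exactly} Poisson by thinning, with parameter $\lambda=\int_0^{t_K\log K}K^{-\varepsilon}e^{as}\,p_{\textrm{surv}}(t_K\log K-s)\,ds$ where $p_{\textrm{surv}}$ comes from~\eqref{eq:Sylvie}; one checks $\lambda\gtrsim K^{\varepsilon}$ in all sign cases of $r$. A Poisson random variable with parameter $\to\infty$ is concentrated (this you can prove by Chebyshev, since its variance equals its mean), so the number of surviving families is at least $K^{\varepsilon/2}$ with high probability, and hence so is $Z^K_{t_K}$. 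You should replace your Chebyshev step on $Z^K$ itself by this Poisson-family argument to close the gap.
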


  The second lemma is valid for all values of $c\leq \beta$, $r$ and $a$. It gives uniform estimates on the modulus of continuity of
  $\log(1+Z^K_{t\log K})/\log K$.

  \begin{lem}[continuity of the exponent]
    \label{lem:exposant-bouge-pas-trop}
    Let $(Z^K_t,t\geq 0)$ be a $BPI_K(b,d,a,c,\beta)$ with $c\leq\beta$. Then, there exists a constant $\bar{c}=\bar{c}(b,d,a)$ such
    that, for all $\varepsilon>0$,
    \begin{align*}
      \lim_{K\rightarrow +\infty}\P\Big(\forall t\in [0,\varepsilon\log K],\ K^{\beta-\bar{c}\varepsilon}-1\leq Z^K_{t}\leq
      K^{\beta+\bar{c}\varepsilon}-1\Big)=1.
    \end{align*}
  \end{lem}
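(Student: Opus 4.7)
My plan is to prove the upper and lower bounds separately by coupling $Z^K$ with two simpler processes, one dominating it from above and one from below, and controlling each through either Lemma~\ref{lem:BP} or the moment estimates of Lemma~\ref{lem:esp-var}.

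For the upper bound, the key observation is that on the window $[0,\varepsilon\log K]$ the immigration rate satisfies $K^c e^{at}\leq K^{c+|a|\varepsilon}$. I would therefore couple $Z^K$ pathwise (by a standard Poisson measure construction) with a pure birth process with immigration $\tilde Z^K\sim BPI_K(b,0,0,c+|a|\varepsilon,\beta)$, obtaining $Z^K_t\leq \tilde Z^K_t$ for all $t\leq \varepsilon\log K$. Since $\tilde Z^K$ is a nonnegative submartingale, Doob's maximal inequality combined with Lemma~\ref{lem:esp-var} applied with death rate $0$ and immigration exponent $0$ yields
\[
\P\!\left(\sup_{t\leq \varepsilon\log K}\tilde Z^K_t\geq K^{\beta+\bar c\varepsilon}\right)\leq \frac{\mathbb{E}[\tilde Z^K_{\varepsilon\log K}]}{K^{\beta+\bar c\varepsilon}}\leq C\,K^{(b+|a|-\bar c)\varepsilon},
\]
where the numerator bound uses $c\leq \beta$. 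Any $\bar c>b+|a|$ makes this probability vanish as $K\to\infty$.

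For the lower bound, when $\beta\leq \bar c\varepsilon$ the inequality $K^{\beta-\bar c\varepsilon}-1\leq 0\leq Z^K_t$ is trivial, so I would reduce to the case $\beta>\bar c\varepsilon$. There I would couple $Z^K$ from below with a branching process $Y^K\sim BP_K(b,d,\beta)$ obtained by following only the descendants of the initial individuals and discarding the immigrants, so that $Y^K_t\leq Z^K_t$. Lemma~\ref{lem:BP} then gives $\log(1+Y^K_{t\log K})/\log K\to (\beta+rt)\vee 0$ in $L^\infty([0,\varepsilon])$. On this window $\beta+rt\geq \beta-|r|\varepsilon>0$, so with probability tending to $1$,
\[
\inf_{t\leq \varepsilon\log K}\frac{\log(1+Y^K_t)}{\log K}\;\geq\;\beta-|r|\varepsilon-\varepsilon,
\]
which is $\geq \beta-\bar c\varepsilon$ as soon as $\bar c\geq |r|+1$. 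Setting $\bar c=\bar c(b,d,a):=(b+|a|+1)\vee(|r|+1)$ and combining both estimates concludes the argument.

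I do not anticipate any serious obstacle: the two couplings are standard, and both of the required control estimates are already packaged into the cited lemmas. The only point requiring minor care is that Lemma~\ref{lem:BP} provides $L^\infty$-convergence on $[0,\varepsilon]$, not merely pointwise convergence, which is precisely what allows the uniform infimum bound over the whole window.
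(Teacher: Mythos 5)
Your proof is correct, and it uses noticeably different tools from the paper on both halves. For the upper bound, the paper bounds each family (initial individuals plus immigrants) by an independent Yule process, controls the number of immigrant families with a Poisson estimate, and sums the resulting geometric random variables; you instead discard deaths, treat the dominating process $\tilde Z^K\sim BPI_K(b,0,0,c+|a|\varepsilon,\beta)$ as a nonnegative submartingale, and apply Doob's maximal inequality with the first-moment formula of Lemma~\ref{lem:esp-var}. The two are morally equivalent (both ignore deaths, both rely on $c\le\beta$ to get the exponent $\beta+(b+|a|+\cdot)\varepsilon$), but your single Doob step is tidier than the family-by-family count. For the lower bound the difference is more significant: the paper discards births and immigration, compares to a pure death process $BP(0,d,\beta)$, and bounds the number of surviving initial individuals directly, yielding the exponent $\beta-2d\varepsilon$; you instead keep births, compare to $BP_K(b,d,\beta)$, and invoke the $L^\infty$ convergence of Lemma~\ref{lem:BP} to get $\beta-(|r|+1)\varepsilon$ with $r=b-d$. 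Your route is less elementary — it calls a nontrivial convergence result where a binomial survival estimate suffices — but it gives a smaller constant when $|b-d|<d$, and the reduction to the trivial case $\beta\le\bar c\varepsilon$ correctly handles the case $\beta=0$ that would otherwise be problematic for Lemma~\ref{lem:BP}. Both approaches produce a valid $\bar c=\bar c(b,d,a)$, so the result holds either way.
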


  We proceed as follows. Fix $\varepsilon>0$ and apply
  Lemma~\ref{lem:non-emergence} on the time interval $[0,t_1]$ with $t_1=|c|/a-\varepsilon$. We deduce that
  \begin{equation}
    \label{eq:pf-technique-1}
    \frac{\log (1+
      Z^K_{t\log K})}{\log K}=0,\quad \forall t\in[0,t_1]
  \end{equation}
  on an event $\Omega_1^K$ with probability converging to 1 and, since $\bar{\beta}_t=0$ if and only if $t\leq
  |c|/a$,~\eqref{eq:BPI-extinction} is proved in case~(ii). Applying the Markov property at time $t_1\log K$ on $\Omega_1^K$. Since
  $Z^K_{t_1\log K}=0$ and $c+at_1<0$, we can apply Lemma~\ref{lem:emergence} to deduce that
  \[
  \frac{\log (1+ Z^K_{(t_1+\delta\varepsilon)\log K})}{\log K}\in(\underline{c}\varepsilon,\bar{c}\varepsilon)
  \]
  with probability converging to 1 for constants $\delta,\underline{c}>0$ and $\bar{c}<\infty$ independent of $\varepsilon$ and $K$.
  In addition, Lemma~\ref{lem:exposant-bouge-pas-trop} implies that, with probability converging to 1,
  \begin{equation}
    \label{eq:pf-contin-exponent}
    \sup_{t\in[t_1,t_1+\delta\varepsilon]}\frac{\log (1+ Z^K_{t\log K})}{\log K}\leq \bar{c}'\varepsilon
  \end{equation}
  for a constant $\bar{c}'>\bar{c}$ independent of $\varepsilon$ and $K$.

  Using the comparison trick of Step 3 of the proof of Theorem~\ref{lem:large-popu} after time $(t_1+\delta)\log K$, we can then
  apply Theorem~\ref{lem:large-popu} to prove that, with probability converging to 1, for all $t\in[t_1+\delta\varepsilon,T]$,
  \begin{align}
    \underline{c}\varepsilon+(r\vee a)(t-t_1-\delta\varepsilon) & \leq \liminf_{K\rightarrow+\infty}\frac{\log (1+ Z^K_{t\log
        K})}{\log K} \notag \\ & \leq \limsup_{K\rightarrow+\infty}\frac{\log (1+ Z^K_{t\log K})}{\log K}\leq
    \bar{c}\varepsilon+(r\vee a)(t-t_1-\delta\varepsilon).
    \label{eq:pf-technique-3}
  \end{align}
  We conclude combining~\eqref{eq:pf-technique-1},~\eqref{eq:pf-contin-exponent} and~\eqref{eq:pf-technique-3} and letting
  $\varepsilon\rightarrow 0$.

  \medskip

  \noindent{\bf Proof of~(i).}
  Note that, when $r\geq 0$, Point~(i) has been already proved in Theorem~\ref{lem:large-popu}. Similarly, if $r<0$, $a\geq 0$ and
  $c+a\beta/|r|>0$, Point~(i) also follows directly from Theorem~\ref{lem:large-popu}. In these two cases,~\eqref{eq:BPI-extinction}
  is also trivial. We divide our study of the remaining cases in four.

  \medskip

  \noindent{\bf Case (a): $r<0$, $a<0$ or $r<0$, $a=0$ and $c<0$.} In this case, we combine the previous lemmas with the next one, similarly to the proof of ~(ii). Note that we need to use Lemma~\ref{lem:extinction}~(i) if $c+a\beta/|r|<0$, Lemma~\ref{lem:extinction}~(ii) if
  $c+a\beta/|r|>0$. If $c+a\beta/|r|=0$, we use a comparison between $Z^K$ and a $BPI_K(b,d,a,c-\varepsilon,\beta)$ and a
  $BPI_K(b,d,a,c+\varepsilon,\beta\vee(c+\varepsilon))$, for which the previous cases apply, and let $\varepsilon\rightarrow 0$.

  \begin{lem}[Extinction]
    \label{lem:extinction}
    Assume that $r<0$.
    \begin{description}
    \item[\textmd{(i)}] Asume also that $c<0$ and $c+a\beta/|r|<0$. Then for all $\eta>0$ small enough,
      \begin{equation}
        \lim_{K\rightarrow +\infty}\P\Big(\forall t\in \Big[\big(\frac{\beta}{|r|}+\eta\big)\log
          K,\big(\frac{\beta}{|r|}+2\eta\big)\log K\Big], \ Z^K_t=0\Big)=1.
      \end{equation}
    \item[\textmd{(ii)}] Assume also that $c+a\beta/|r|>0$ and $a<0$. Then for all $\eta>0$ and $T>\eta$,
      \begin{equation}
        \lim_{K\rightarrow +\infty}\P\Big(\forall t\in \Big[\big(\frac{c}{|a|}+\eta\big)\log
          K,\big(\frac{c}{|a|}+T\big)\log K\Big], \ Z^K_t=0\Big)=1.
      \end{equation}
    \end{description}
  \end{lem}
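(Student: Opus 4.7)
The plan is to establish both parts by the same two-step argument. Denote by $I_K=[T_1^K,T_2^K]$ the interval on which extinction is claimed: $T_1^K=(\beta/|r|+\eta)\log K$ in Part~(i) and $T_1^K=(c/|a|+\eta)\log K$ in Part~(ii). I will prove
(A)~$\P(Z^K_{T_1^K}=0)\to 1$, and
(B)~with probability tending to $1$, the Poisson process of immigrations records no event in $I_K$.
Both together yield the lemma: applying the (strong) Markov property at $T_1^K$, on the intersection of these two high-probability events the process starts from $0$ and receives no immigration, hence remains identically $0$ on $I_K$.

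For (A), I would combine the closed-form expectation of Lemma~\ref{lem:esp-var} with Markov's inequality $\P(Z^K_{T_1^K}\geq 1)\leq\E Z^K_{T_1^K}$, valid since $Z^K$ is $\N$-valued. Modulo bounded prefactors (and a harmless $\log K$ factor in the degenerate regime $r=a$), $\E Z^K_{t\log K}$ is a sum of three powers of $K$ with exponents $\beta+rt$, $c+rt$ and $c+at$; it suffices to check that each exponent is strictly negative at $t=T_1^K/\log K$ for $\eta$ small. In Part~(i), at $t=\beta/|r|+\eta$ these exponents are $-|r|\eta$, $c-\beta-|r|\eta$ (negative thanks to $c\leq\beta$, the standing assumption of Theorem~\ref{thm:BPI-general}), and $c+a\beta/|r|+a\eta$ (negative for $\eta$ small by the hypothesis $c+a\beta/|r|<0$). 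In Part~(ii), the combined hypotheses $c\leq\beta$ and $c+a\beta/|r|>0$ force $|a|<|r|$; at $t=c/|a|+\eta$ the three exponents read $\beta-|r|c/|a|-|r|\eta$, $c(1-|r|/|a|)-|r|\eta$ and $-|a|\eta$, all strictly negative.

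For (B), the number of immigrations on $I_K$ is Poisson with mean $\int_{T_1^K}^{T_2^K} K^c e^{as}\,ds$. Since the integrand is monotone in $s$, its supremum on $I_K$ is attained at an endpoint: in Part~(i) it equals $K^{c+a\beta/|r|+a\eta}$ when $a\leq 0$ and $K^{c+a\beta/|r|+2a\eta}$ when $a>0$, both with strictly negative exponents for $\eta$ small (using $c+a\beta/|r|<0$); in Part~(ii) it equals $K^{-|a|\eta}$. The mean is therefore bounded by $|I_K|\cdot K^{-\varepsilon}=O(\log K\cdot K^{-\varepsilon})\to 0$, and a final application of Markov's inequality to this Poisson count yields~(B).

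The main obstacle is cosmetic rather than conceptual: the expectation formula of Lemma~\ref{lem:esp-var} degenerates on the hyperplane $r=a$, where an extra $\log K$ factor appears in the closed form. This factor does not affect the signs of the leading exponents above, so the negativity checks go through after minor bookkeeping. Alternatively, one may perturb $a$ (or $c$) by $\pm\varepsilon$, apply the generic-case bound via the stochastic monotonicity of $Z^K$ with respect to its immigration parameters, and then let $\varepsilon\to 0$.
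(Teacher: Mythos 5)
Your proposal is correct, and both exponent computations and the Poisson-count bound check out; the structure (extinction at an intermediate time, then no immigration on the target interval, then Markov property) is sound. It is, however, technically a different route from the paper's proof. The paper establishes extinction via the explicit survival probability \eqref{eq:Sylvie} of a pure linear birth--death process: in Part~(i) it first shows there is no immigration on all of $[0,T\log K]$, so $Z^K$ is a pure $BP(b,d,K^\beta)$ whose extinction time is controlled directly; in Part~(ii), where immigration does occur before $T_1^K$, the paper first invokes the Markov property together with Theorem~\ref{lem:large-popu} to reduce to the case $\beta$ small, and then bounds the number of surviving families (initial individuals plus early immigrants) one by one. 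Your argument replaces all of this with the closed-form mean from Lemma~\ref{lem:esp-var} plus a first-moment Markov inequality, which automatically accounts for the immigration occurring before $T_1^K$; this makes the two parts uniform and eliminates the reduction to small $\beta$ in Part~(ii). The trade-off is that your argument leans on the full expression in Lemma~\ref{lem:esp-var} (and its $r=a$ degenerate form), whereas the paper's Part~(i) only needs the survival probability of a subcritical $BP$. Both give the result; yours is arguably the cleaner presentation for Part~(ii).
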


  \noindent{\bf Case (b): $r<0$, $a>0$, $c<0$ and $\frac{\beta}{|r|}<\frac{|c|}{a}$.} This corresponds to Figure~\ref{fig:thm-main}~(c). In this case, we combine as above Theorem~\ref{lem:large-popu}, Lemma~\ref{lem:extinction}, Lemma~\ref{lem:non-emergence} and Lemma~\ref{lem:emergence}.
  \medskip

  \noindent{\bf Case (c): $r<0$, $a>0$, $c<0$ and $\frac{\beta}{|r|}=\frac{|c|}{a}$.} This can be treated using comparisons
  between $Z^K_t$ and $BPI_K(b,d,a,c-\varepsilon,\beta)$ and $BPI_K(b,d,a,c+\varepsilon,\beta\vee(c+\varepsilon))$ and letting
  $\varepsilon\rightarrow 0$.
  \medskip

  \noindent{\bf Case (d): $r<0$, $c=a=0$.} This can be treated using comparisons between $Z^K_t$ and
  $BPI_K(b,d,0,-\varepsilon,\beta)$ and $BPI_K(b,d,0,\varepsilon,\beta\vee\varepsilon)$ and letting $\varepsilon\rightarrow 0$.\end{proof}

\noindent{\bf Proof of Lemma~\ref{lem:non-emergence}.}
Since the rate of immigration is upper bounded by $K^{c\vee(c+aT)}$ on $[0,T\log K]$, the probability that a migrant arrives during
this time interval is upper bounded by $T K^{c\vee(c+aT)}\log K$ which tends to 0 when $K\rightarrow+\infty$. The lemma is proved.
\hfill$\Box$\bigskip

\noindent{\bf Proof of Lemma~\ref{lem:emergence}.}
The number of immigrant families which arrived during the time interval $[0,\frac{2\varepsilon}{a}\log K]$
  and which survived up to time $\frac{2\varepsilon}{a}\log K$ is, by thinning, a Poisson random variable with parameter
  \begin{align*}
    \lambda=\int_0^{\frac{2\varepsilon}{a}\log K}K^{-\varepsilon}e^{at}\frac{r K^{\frac{2r\varepsilon}{a}}e^{-rt}}{b K^{\frac{2r\varepsilon}{a}}e^{-rt}-d}dt.
  \end{align*}
  This formula is obtained by \eqref{eq:Sylvie}, where the probability of keeping a family immigrated at time $t$ is the fraction in the above expression.\\
  In the case where $r\geq 0$,
  \begin{align*}
    \lambda & \geq \frac{r}{b}\int_0^{\frac{2\varepsilon}{a} \log K}K^{-\varepsilon}e^{at}dt\geq\frac{rK^{\varepsilon}}{2ab}.
  \end{align*}
  In the case where $r<0$,
  \begin{align*}
    \lambda & \geq \frac{d-b}{d}K^{\frac{2r\varepsilon}{a}-\varepsilon}\int_0^{\frac{2\varepsilon}{a}\log K} e^{(a+|r|)t}dt\geq
    \frac{|r|K^\varepsilon}{2d(a+|r|)}.
  \end{align*}
  Therefore,
  \[
  \lim_{K\rightarrow +\infty}\P\left(Z^K_{\frac{2\varepsilon}{a} \log K} \geq K^{\varepsilon/2}\right)=1.
  \]
  For the upper bound, it follows from~\eqref{eq:esperance} that
  \begin{align*}
    \E\left(Z^K_{\frac{2\varepsilon}{a}\log K}\right)\leq
    \begin{cases}
      \frac{K^\varepsilon}{a-r} & \text{if }r<a, \\
      \frac{4\varepsilon}{a}K^\varepsilon\log K & \text{if }r=a,\\
      \frac{K^{\frac{2r}{a}\varepsilon}}{r-a} & \text{if }r>a.
    \end{cases}
  \end{align*}
  Therefore,~\eqref{eq:lem-emergence} follows from Markov's inequality and the choice $\eta>(1\vee\frac{2r}{a})\varepsilon$.
\hfill$\Box$\bigskip

\noindent{\bf Proof of Lemma~\ref{lem:exposant-bouge-pas-trop}.}
  The number of immigrant families which arrive during the time interval $[0,\varepsilon \log K]$
  and which survive up to time $\varepsilon\log K$ is a Poisson random variable with parameter
  \begin{align*}
    \lambda\leq \int_0^{\varepsilon\log K} K^c e^{at}dt\leq K^\beta \frac{K^{(|a|+1/2)\varepsilon}}{|a|+1/2}.
  \end{align*}
  The maximal size of each families (immigrant or present at time 0) on the time interval $[0,\varepsilon\log K]$ is bounded by the
  size at time $\varepsilon\log K$ of a Yule process with birth rate $b$, i.e.\ a geometric random variable $G_i$ with parameter
  $K^{b\varepsilon}$, independently for each immigrant families. Hence, with probability converging to 1,
  \begin{align}
    \label{eq:domination-Yule}
    \sup_{t\in [0,\varepsilon\log K]} Z^K_t\leq\sum_{i=1}^{K^{\beta+(|a|+3/4)\varepsilon}} G_i\leq K^{\beta+(|a|+1+b)\varepsilon}
  \end{align}

  For the lower bound, we observe that $Z^K_t$ is bigger than a linear pure death process $BP(0,d,\beta)$. For each of the $\lfloor
  K^\beta-1\rfloor$ initial individuals, the probability of survival up to time $\varepsilon\log K$ is $K^{-d\varepsilon}$, hence,
  with probability converging to 1, $\ \inf_{t\in [0,\varepsilon\log K]} Z^K_t\geq K^{\beta-2d\varepsilon}$. Hence the lemma is
  proved with $\bar{c}=(2d)\vee (|a|+b+1)$. \hfill$\Box$\bigskip

\noindent{\bf Proof of Lemma~\ref{lem:extinction}.} We first prove (i). Using the same argument as in the proof of
Lemma~\ref{lem:non-emergence}, we can prove that the probability of the event $\Gamma$ that a migrant arrives during the time interval
$[0,T\log K]$ converges to 0. Therefore, on the complementary event $\Gamma^c$ and on this time interval, the process $Z^K$ is a
$BP(b,d,K^\beta)$ and, using~\eqref{eq:Sylvie}, its extinction time $T_\text{ext}$ satisfies, for all $t\leq T\log K$,
\begin{align*}
  \PP(T_{\text{ext}}>t;\Gamma^c)\leq 1-\left(1-\frac{r e^{rt}}{b e^{rt}-d}\right)^{K^\beta}.
\end{align*}
Thus, for $t=\eta\log K$ with $\eta>\beta/|r|$, there exists a constant $C$ such that
\begin{align*}
  \PP(T_{\text{ext}}>\eta\log K)\leq\PP(\Gamma)+C\,K^{-|r|\eta+\beta}\xrightarrow[K\rightarrow+\infty]{} 0.
\end{align*}

Now, let us prove~(ii). Using the Markov property and Theorem~\ref{lem:large-popu}, we can assume without loss of generality that
$c\leq\beta<\eta|r|/4$. Note that assuming $c+a\beta/|r|>0$ implies that $a>r$. In addition, we can prove as above that, with a
probability converging to 1, there is no new immigrant arriving in the population between times $(c/|a|+\eta/2)\log K$ and
$(c/|a|+T)\log K$. Hence we only need to check that all the families that descend from each of the individuals initially present in
the population and of all immigrants which arrive before time $(c/|a|+\eta/2)\log K$ are extinct before time $(c/|a|+\eta)\log K$.
Using~\eqref{eq:Sylvie}, each of these families has a probability to survive longer than a time $(\eta/2)\log K$ which is smaller
than $2 K^{-\eta|r|/2}$ (for $K$ large enough). Since the number of these families is equal to $K^\beta$ plus a Poisson random
variable of parameter $K^c\int_0^{(c/|a|+\eta/2)\log K}e^{as}ds\leq K^c/a$, we deduce that it is smaller that $K^{\eta r/3}$ with
probability converging to 1. Hence, on this event, the probability that at least one of these families survives up to time
$(c/|a|+\eta)\log K$ is smaller than
\[
\left(1-2 K^{-\eta|r|/2}\right)^{K^{\eta r/3}}\xrightarrow[]{K\rightarrow 0} 0.
\]
This concludes the proof of Lemma~\ref{lem:extinction}.
\hfill$\Box$\bigskip

\section{Logistic birth and death process with  immigration}
\label{sec:logistic-BDP}

\subsection{One-dimensional case}
\label{sec:logistic-1d}

We consider here a one-dimensional logistic birth and death process with individual birth rate $b$ and individual death rate $d+Ck/K$
when the population size is $k$ and immigration at predictable rate $\gamma(t)\geq 0$ at time $t$. We denote by
$LBDI_K(b,d,C,\gamma)$ the law of this process. We consider a specific initial condition in the next result. Recall that $r=b-d$.

\begin{lem}\label{lem:sable-popu}
  Assume that $(Z^K_t,t\geq 0)$ follows the law $LBDI_K(b,d,C,\gamma)$ with $b>d$. Let $T>0$ and assume
  that $\gamma(t)\leq K^{1-\alpha}$ for all $t\in[0,T\log K]$ for some $\alpha>0$.
  \begin{description}
  \item[\textmd{(i)}] If  $\frac{Z^K_0}{K}\in\left[\frac{r}{C}-\varepsilon,\frac{r}{C}+\varepsilon\right]$ for some $\varepsilon>0$, then
  \[
  \lim_{K\rightarrow+\infty}\P\left(\forall t\in[0,T\log K],\ \frac{Z^K_t}{K}\in\left[\frac{r}{C}-2\varepsilon,\frac{r}{C}+2\varepsilon\right]\right)=1.
  \]
  \item[\textmd{(ii)}] For all $\varepsilon,\varepsilon'>0$, there exists $T(\varepsilon,\varepsilon')<+\infty$ such that for all initial condition
  $\frac{Z^K_0}{K}\geq \varepsilon $ we have that
  \[
  \lim_{K\rightarrow+\infty}\P\left( \frac{Z^K_{T(\varepsilon,\varepsilon')}}{K}\in\left[\frac{r}{C}-2\varepsilon',\frac{r}{C}+2\varepsilon'\right]\right)=1.
  \]
\end{description}
\end{lem}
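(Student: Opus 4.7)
The plan is to prove both statements by comparing $Z^K/K$ with the solution of the logistic ODE $\dot z = z(r-Cz)$, whose positive equilibrium $z^{*} := r/C$ is exponentially attracting with rate $r>0$. This strict linear stability is precisely what allows the standard finite-horizon ODE approximation to be pushed up to times of order $\log K$ required in part (i). I start from the semimartingale decomposition
\begin{equation*}
Z^K_t = Z^K_0 + \int_0^t \bigl( r Z^K_s - \tfrac{C}{K} (Z^K_s)^2 + \gamma(s) \bigr) ds + M^K_t,
\end{equation*}
where $M^K$ is a locally square-integrable martingale with predictable quadratic variation $\langle M^K \rangle_t = \int_0^t ((b+d) Z^K_s + \tfrac{C}{K}(Z^K_s)^2 + \gamma(s)) ds$, and introduce the exit time $\sigma^K = \inf\{t \geq 0 : Z^K_t/K \notin [z^{*} - 2\varepsilon, z^{*} + 2\varepsilon]\}$. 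On the stopped interval $[0, \sigma^K \wedge T \log K]$, the integrand in $\langle M^K \rangle$ is bounded by $C_1 K$ for a constant $C_1$ depending only on $b,d,C,\varepsilon$, so Doob's $L^2$ inequality gives $\sup_{t \leq \sigma^K \wedge T \log K} |M^K_t|/K = O(\sqrt{\log K / K}) \to 0$ in probability, while the immigration drift contributes at most $T K^{-\alpha} \log K = o(1)$.

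Writing $u^K_t = Z^K_t/K - z^{*}$ and using the identity $r Z^K - \tfrac{C}{K}(Z^K)^2 = -K u^K (r + C u^K)$, the decomposition becomes
\begin{equation*}
u^K_t = u^K_0 - r \int_0^t u^K_s \, ds + \int_0^t \bigl(-C(u^K_s)^2 + \gamma(s)/K \bigr) ds + M^K_t/K,
\end{equation*}
and the variation of constants formula yields
\begin{equation*}
u^K_t = e^{-rt} u^K_0 + \int_0^t e^{-r(t-s)} \bigl(-C(u^K_s)^2 + \gamma(s)/K\bigr) ds + \int_0^t e^{-r(t-s)} dM^K_s/K.
\end{equation*}
On $[0, \sigma^K \wedge T \log K]$, the second term is bounded by $4 C \varepsilon^2/r + o(1)$, and integration by parts together with the martingale bound of the previous paragraph controls the third term uniformly by $o(1)$ in probability. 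Hence, for any $\varepsilon < r/(4C)$,
\begin{equation*}
\sup_{t \leq \sigma^K \wedge T \log K} |u^K_t| \leq \varepsilon + 4 C \varepsilon^2/r + o(1) < 2\varepsilon
\end{equation*}
with probability tending to $1$, which (taking jumps of size $1/K$ into account) contradicts the definition of $\sigma^K$ unless $\sigma^K > T \log K$. This proves part~(i) for small $\varepsilon$, and the general case follows by first invoking part~(ii) to enter a sub-neighborhood with $\varepsilon_0 < r/(4C)$.

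Part~(ii) is then a consequence of the classical Kurtz-type large-population limit on a \emph{compact} time interval. Since $z^{*}$ attracts globally every positive orbit of $\dot z = z(r - Cz)$, there is a finite deterministic time $T(\varepsilon, \varepsilon')$ at which every solution starting from $z(0) \geq \varepsilon$ reaches $[z^{*} - \varepsilon', z^{*} + \varepsilon']$. Standard Doob estimates on $[0, T(\varepsilon, \varepsilon')]$ combined with a Gr\"onwall argument applied to $Z^K_t/K - z(t)$ give uniform convergence in probability at this fixed time, the immigration drift being $o(1)$ since $\gamma \leq K^{1 - \alpha}$. The main obstacle in the whole argument is really the $\log K$ horizon in part~(i): a naive Gr\"onwall bound on $|Z^K_t/K - z(t)|$ would blow up as $K^{O(1)}$, and it is only because the linearization of the ODE at $z^{*}$ has eigenvalue $-r < 0$ that the variation-of-constants formula absorbs this exponential growth into a decay and produces a bounded error uniformly on $[0, T \log K]$.
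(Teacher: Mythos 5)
Your proof is correct, and it takes a genuinely different and more elementary route than the paper. The paper's proof is a two-line appeal to the Freidlin--Wentzell theory of exit from a domain (via the arguments of~\cite{champagnat06,champagnatmeleard2011,champagnatjabinmeleard}), which in fact yields the much stronger statement that the exit time from a neighbourhood of $r/C$ is of order $e^{cK}$, from which the $T\log K$ horizon of part~(i) is immediate; the only issue flagged there is the immigration term, handled by adapting~\cite[Prop.~4.2]{champagnatjabinmeleard}. You instead prove exactly the $T\log K$ statement directly and in a self-contained way: you stop the process at the exit time of $[r/C-2\varepsilon,r/C+2\varepsilon]$, bound $\langle M^K\rangle$ by $O(K\log K)$ on that interval, get $\sup_{t\le T\log K}|M^K_t|/K=O_P(\sqrt{\log K/K})$ from Doob's $L^2$ inequality, and — crucially — linearise around $r/C$ and use variation of constants so that the eigenvalue $-r<0$ of $\dot z=z(r-Cz)$ converts the a priori exponential Gr\"onwall blow-up into a decaying convolution that stays bounded uniformly on $[0,T\log K]$. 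Your observation at the end that this linear stability is what makes the $\log K$ horizon tractable is precisely the mechanism that the large-deviation argument packages more abstractly. Two small points are worth making explicit: (a) the integration-by-parts bound on $\int_0^t e^{-r(t-s)}dM^K_s$ reduces to $\le 2\sup_{s\le t}|M^K_s|$, which is indeed $o(K)$ by the Doob estimate; (b) in the reduction of general $\varepsilon$ to $\varepsilon_0<r/(4C)$ via part~(ii), one should also note that the logistic ODE trajectory is monotone between $z(0)$ and $r/C$, so the Kurtz approximation keeps $Z^K_t/K$ inside $[r/C-2\varepsilon,r/C+2\varepsilon]$ during the finite relaxation window before part~(i) takes over.
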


\begin{proof}
  This result is related to the problem of exit from a domain of~[Freidlin-Wentzell] and can be proved with standard arguments as
  in~\cite{champagnat06,champagnatmeleard2011}. The only difficulty comes from the additional immigration rate (smaller than
  $K^{1-a}$ for all $t\in[0,T\log K]$), which is negligible with respect to the reproduction rate (of order $K$), but this can be
  handled for example adapting the proof of~\cite[Prop. 4.2]{champagnatjabinmeleard}.
\end{proof}

\subsection{Two-dimensional case}
\label{sec:logistic-2d}

\subsubsection{Transfer birth-death process with immigration}

We consider a two-dimensional transfer process with immigration $(Y^K_t,Z^K_t)_{t\geq 0}$, with transition
rates from $(n,m)\in\N^2$ to
\begin{align*}
  (n+1,m) & \text{ with rate }n b^K_1(\omega,t)+\gamma^K_1(\omega,t), \\
  (n-1,m) & \text{ with rate }n d^K_1(\omega,t), \\
  (n,m+1) & \text{ with rate }m b^K_2(\omega,t)+\gamma^K_2(\omega,t), \\
  (n,m-1) & \text{ with rate }m d^K_2(\omega,t), \\
  (n-1,m+1) & \text{ with rate }\tau^K(\omega,t)\frac{nm}{n+m},
\end{align*}
with $C>0$ and predictable rates $\tau^K,b^K_i,d^K_i,\gamma^K_i:\Omega\times\R_+\rightarrow\R_+$. Remark that transfer only occurs from
population $Y^K$ to population $Z^K$.

\noindent We denote by $TBDI_K(b^K_1,d^K_1,b^K_2,d^K_2,\tau^K,\gamma^K_1,\gamma^K_2)$ the law of $(Y^K,Z^K)$.

\begin{lem}\label{lem:competitionTBDI}
  Assume that $(Y^K_t,Z^K_t)_{t\geq 0}$ follows the law $TBDI_K(b^K_1,d^K_1,b^K_2,d^K_2,\tau^K,\gamma^K_1,\gamma^K_2)$ and that there
  exist constants $b_1,d_1,b_2,d_2,\tau>0$ such that for some $s>0$,
  \begin{multline}
    \label{eq:condition-CIRM}
    \sup_{t\in[0,s\log
      K]}\|b_1^K(t)-b_1\|+\|b_2^K(t)-b_2\|+\|d_1^K(t)-d_1\|+\|d_2^K(t)-d_2\| \\
    +\|\tau^K(t)-\tau\|+\|\gamma_1^K(t)\|+\|\gamma_2^K(t)\|
    \xrightarrow[K\rightarrow+\infty]{} 0
  \end{multline}
  in probability. Let $S:=r_2-r_1+\tau$.
  \begin{description}
  \item[\textmd{(i)}] Assume $S>0$, that $Y^K_0\geq K^\beta$ for some $\beta>0$ and that $\eta Y^K_0<Z^K_0< Y^K_0$ for some $\eta>0$. Then, there exists
    $T=T(\eta)<\infty$ and $\rho>0$ such that for $s>0$ small enough,
\begin{equation}
    \label{eq:lemme-TBDI1}
    \lim_{K\rightarrow+\infty}\P\left(Y^K_{s\log K}\leq  K^{-s\rho}Z^K_{s\log
        K} \right)=1.
  \end{equation}
  \item[\textmd{(ii)}] Assume $S<0$, that $Z^K_0\geq K^\beta$ for some $\beta>0$ and that $\eta Z^K_0<Y^K_0<Z^K_0$ for some $\eta>0$. Then, there exists
    $T=T(\eta)<\infty$ and $\rho>0$ such that for $s>0$ small enough,
\begin{equation}
    \label{eq:lemme-TBDI2}
    \lim_{K\rightarrow+\infty}\P\left(Z^K_{s\log K}\leq  K^{-s\rho}Y^K_{s\log
        K} \right)=1.
  \end{equation}
  \end{description}
\end{lem}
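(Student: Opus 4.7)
The main quantity to track is the log-ratio $U^K_t := \log(1+Y^K_t) - \log(1+Z^K_t)$, whose deterministic fluid-limit analogue satisfies $\dot u = -S$. Indeed, setting $q = y/(y+z)$ in the ODE system obtained by ignoring the martingale and immigration terms yields $\dot q = -Sq(1-q)$, and changing variables to $u = \log(q/(1-q))$ linearizes this. The plan is to show that $U^K_t$ tracks this linear behaviour on the time scale $s\log K$: this immediately yields $Y^K_{s\log K}/Z^K_{s\log K} \leq K^{-s\rho}$ for any $\rho<S$, and the symmetric statement in case (ii) follows by swapping the roles of $Y^K$ and $Z^K$ and using $-S>0$.

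I would begin with the semimartingale decomposition $U^K_t = U^K_0 + \int_0^t \Phi^K_u\,du + M^K_t$. Jump-by-jump Taylor expansion gives $\Phi^K_u = -S + \epsilon^K_u + R^K_u$, where $\epsilon^K_u$ gathers the deviations $b_i^K-b_i$, $d_i^K-d_i$, $\tau^K-\tau$ (which vanish uniformly on $[0,s\log K]$ by~\eqref{eq:condition-CIRM}), and $R^K_u$ is bounded by a constant times $1/Y^K_u + 1/Z^K_u + \gamma^K_1(u)/Y^K_u + \gamma^K_2(u)/Z^K_u$. The key algebraic check is that the nonlinear transfer term contributes exactly $-\tau^K$ to leading order: since $\log((Y-1)/Y) + \log(Z/(Z+1)) = -(1/Y + 1/Z) + O(1/Y^2 + 1/Z^2)$, multiplying by the rate $\tau^K YZ/(Y+Z)$ yields $-\tau^K + O(1/\min(Y,Z))$. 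The predictable bracket of the martingale part is bounded by $\langle M^K\rangle_t \leq C\int_0^t(1/Y^K_u + 1/Z^K_u)\,du$.

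The second ingredient is a uniform lower bound $Y^K_u \wedge Z^K_u \geq K^{\beta/2}$ on $[0, s\log K]$. Treating the transfer as an additional death for $Y^K$, one dominates $Y^K$ from below by a linear birth-death process with rates $b_1-\eta$ and $d_1+\tau+\eta$ (immigration only helps), to which Lemma~\ref{lem:BP} applies; a symmetric argument dominates $Z^K$ from below by a linear birth-death process with rates $b_2-\eta$ and $d_2+\eta$ (the transfer, adding to $Z$, is discarded). Choosing $s$ smaller than a constant depending on $\beta$ and the rates ensures that both lower bounds exceed $K^{\beta/2}$ throughout the interval with probability going to one.

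On this event, $\int_0^{s\log K}|R^K_u|\,du = O(s\log K \cdot K^{-\beta/2})\to 0$, $\langle M^K\rangle_{s\log K} \to 0$, and Doob's inequality gives $\sup_{u\leq s\log K}|M^K_u| \to 0$ in probability. Combined with the uniform convergence of the rates, one concludes $U^K_{s\log K} = U^K_0 - Ss\log K + o(\log K)$, and since $|U^K_0|\leq |\log\eta|$ is bounded, division by $\log K$ yields $U^K_{s\log K}/\log K \to -Ss$. Any $\rho\in(0,S/2)$ then works for $s$ small enough, establishing~\eqref{eq:lemme-TBDI1}; case~(ii) follows identically. The main technical obstacle is the transfer term: its joint-jump structure and nonlinearity $YZ/(Y+Z)$ preclude thinning $Y^K$ and $Z^K$ into independent processes, so both the Taylor expansion of the log-ratio drift and the branching-process lower bounds must be set up with some care, in particular with a pessimistic constant transfer rate $\tau+\eta$ in the domination for $Y^K$.
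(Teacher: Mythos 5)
Your proof is correct and is in fact cleaner and more self-contained than what the paper offers for this statement. The paper disposes of Lemma~\ref{lem:competitionTBDI} with a one-line reference to Lemma~\ref{lem:competition}~(iii), whose proof proceeds in two phases: a law-of-large-numbers/ODE phase that brings the ratio $y/z$ below a small threshold $2\eta$ (relying on populations of order $K$ and on the logistic term), followed by a one-sided coupling with linear branching processes whose ratio decays at rate close to $S$ precisely because $Z/(Y+Z)$ is then close to $1$. In the TBDI setting the ODE phase is unavailable --- $Y^K,Z^K$ may be of order $K^\beta$ with $\beta<1$ --- and the bare branching-process comparison using only the initial constraint $Z/(Y+Z)\geq\eta/(1+\eta)$ would give a decay rate $r_2-r_1+\tau\eta/(1+\eta)=S-\tau/(1+\eta)$, which can be negative when $\eta$ is small. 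Your key observation, that the drift of $\log(1+Y^K)-\log(1+Z^K)$ is $-S+O(1/Y^K+1/Z^K)$ \emph{uniformly in the ratio} because the transfer jumps contribute exactly $\tau\frac{YZ}{Y+Z}\big(-\frac{1}{Y}-\frac{1}{Z}\big)=-\tau$ to leading order, is the same fact the paper exploits at the ODE level ($\dot p=-Sp$); applying it directly to the jump process via the semimartingale decomposition, with branching-process domination used only to keep $Y^K\wedge Z^K\gtrsim K^{\beta/2}$ on $[0,s\log K]$, is the correct route and yields any $\rho<S$. Two small details to tighten in a full write-up: the lower-bound branching processes need $\pm\eta$ corrections on both birth and death rates (so $d_1+\tau+2\eta$, not $d_1+\tau+\eta$) to absorb the deviations of~\eqref{eq:condition-CIRM}; and the martingale $M^K$ should be stopped at the exit time of $\{Y^K\wedge Z^K\geq K^{\beta/2}\}$ before applying Doob's inequality, since the bracket estimate only holds on that event.
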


Lemma~\ref{lem:competitionTBDI}~(i) says that, if the population $Y^K$ is initially dominant and if the population $Z^K$ has a positive fitness and is initially not negligible, after a short time on the time-scale $\log K$, $Y^K$ becomes negligible with respect to $Z^K$. For the proof, we refer to Lemma~\ref{lem:competition}~(iii).

\subsubsection{Logistic transfer birth-death process with immigration}

We consider a two-dimensional logistic transfer process with immigration $(Y^K_t,Z^K_t)_{t\geq 0}$, with transition
rates from $(n,m)\in\N^2$ to
\begin{align*}
  (n+1,m) & \text{ with rate }nb^K_1(\omega,t)+\gamma^K_1(\omega,t), \\
  (n-1,m) & \text{ with rate }n \left[d^K_1(\omega,t)+\frac{C}{K}(n+m)\right], \\
  (n,m+1) & \text{ with rate }m b^K_2(\omega,t)+\gamma^K_2(\omega,t), \\
  (n,m-1) & \text{ with rate }m \left[d^K_2(\omega,t)+\frac{C}{K}(n+m)\right], \\
  (n-1,m+1) & \text{ with rate }\tau^K(\omega,t)\frac{nm}{n+m},
\end{align*}
with $C>0$ and predictable rates $\tau^K,b^K_i,d^K_i,\gamma^K_i:\Omega\times\R_+\rightarrow\R_+$. Remark that transfer only occurs from
population $Y^K$ to population $Z^K$.

\noindent We denote by $LTBDI_K(b^K_1,d^K_1,b^K_2,d^K_2,C,\tau^K,\gamma^K_1,\gamma^K_2)$ the law of $(Y^K,Z^K)$.

\begin{lem}[Competition]\label{lem:competition}
  Assume that $(Y^K_t,Z^K_t)_{t\geq 0}$ follows the law\\ $LTBDI_K(b^K_1,d^K_1,b^K_2,d^K_2,C,\tau^K,\gamma^K_1,\gamma^K_2)$ and that there
  exist constants $b_1,d_1,b_2,d_2,\tau>0$ such that for some $s>0$, the convergence
 \eqref{eq:condition-CIRM}
   is satisfied.
  \begin{description}
  \item[\textmd{(i)}] Assume that $r_1:=b_1-d_1>0$, $r_2:=b_2-d_2>0$, $S:=r_2-r_1+\tau>0$ and
    $\frac{Y^K_0}{K}\in\left[\frac{r_1}{C}-\varepsilon,\frac{r_1}{C}+\varepsilon\right]$ for some $\varepsilon>0$ and
    $\frac{Z^K_0}{K}\geq m\varepsilon$ for some $m>0$. Then, for all $\varepsilon'>0$, there exists
    $T=T(m,\varepsilon,\varepsilon')<\infty$ such that
    \[
    \lim_{K\rightarrow+\infty}\P\left(Y^K_{T}\leq \varepsilon' K,\
      \frac{Z^K_{T}}{K}\in\left[\frac{r_2}{C}-\varepsilon',\frac{r_2}{C}+\varepsilon'\right]\right)=1.
    \]
  \item[\textmd{(ii)}] Assume that $r_1>0$, $r_2>0$, $S<0$ and $\frac{Y^K_0}{K}\geq m\varepsilon$ for some $\varepsilon,m>0$ and
    $\frac{Z^K_0}{K}\in\left[\frac{r_2}{C}-\varepsilon,\frac{r_2}{C}+\varepsilon\right]$. Then, for all $\varepsilon'>0$ there exists
    $T=T(m,\varepsilon,\varepsilon')<\infty$ such that
    \[
    \lim_{K\rightarrow+\infty}\P\left(
      \frac{Y^K_{T}}{K}\in\left[\frac{r_1}{C}-\varepsilon',\frac{r_1}{C}+\varepsilon'\right],\
      Z^K_{T}\leq \varepsilon' K,\right)=1.
    \]
  \item[\textmd{(iii)}] Assume that $r_1>0$, $r_2<0$, $S>0$ and
    $\frac{Y^K_0}{K}\in\left[\frac{r_1}{C}-\varepsilon,\frac{r_1}{C}+\varepsilon\right]$ for some $\varepsilon>0$ and
    $\frac{Z^K_0}{K}\geq m\varepsilon$ for some $m>0$. Then there exists a constant $\rho>0$ such that,
    for all $s>0$ small enough,
  \begin{equation}
    \label{eq:lemme-extinction}
    \lim_{K\rightarrow+\infty}\P\left(Y^K_{s\log K}\leq K^{-s\rho}Z^K_{s\log
        K},\ Z^K_{s\log
        K}\leq K^{1-s\rho},\ \right)=1.
  \end{equation}
  \end{description}
\end{lem}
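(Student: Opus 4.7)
All three cases rest on the same strategy: compare the renormalized process with the deterministic Lotka-Volterra-type flow
\begin{align*}
\dot y = y\bigl[r_1-C(y+z)-\tau z/(y+z)\bigr],\qquad
\dot z = z\bigl[r_2-C(y+z)+\tau y/(y+z)\bigr],
\end{align*}
on a time interval of order $1$, and, when necessary, with birth-death and logistic processes from Sections~\ref{sec:linear-BDP}--\ref{sec:logistic-1d} on the longer $\log K$ scale. The convergence~\eqref{eq:condition-CIRM} together with the classical semimartingale/Gronwall argument of~\cite{ethierkurtz} (applied as in~\cite[Thm~2.1]{billiardcolletferrieremeleardtran}) ensures that $(Y^K_\cdot/K,Z^K_\cdot/K)$ converges in probability on every fixed compact interval $[0,T]$ to the solution $(y,z)$ of the ODE above with the appropriate initial condition. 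The immigration terms $\gamma^K_i(t)$ contribute $o(1)$ to the drift by~\eqref{eq:condition-CIRM} and only inflate the martingale bracket by a vanishing quantity, which causes no difficulty.

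For case (i), a direct linearization at the boundary equilibria gives invasion fitness $S>0$ for $z$ at $(r_1/C,0)$ and $-S<0$ for $y$ at $(0,r_2/C)$; combined with invariance of the axes and standard Lotka--Volterra monotonicity, every solution with $y_0,z_0>0$ converges to $(0,r_2/C)$. A uniform version of this convergence on the compact set of admissible data $\{(y_0,z_0):|y_0-r_1/C|\le\varepsilon,\;z_0\ge m\varepsilon,\;y_0+z_0\le M\}$ yields $T=T(m,\varepsilon,\varepsilon')<\infty$ such that the deterministic solution enters an $\varepsilon'/2$-neighbourhood of $(0,r_2/C)$ by time $T$. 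The stochastic conclusion follows. Case (ii) follows from the same argument after exchanging the roles of $y$ and $z$ and replacing $\tau$ by an outgoing (rather than incoming) transfer; equivalently, the limiting flow has the attractor $(r_1/C,0)$.

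Case (iii) is the delicate one and the main obstacle, since $r_2<0$ forces the candidate attractor $(0,r_2/C)$ out of the positive quadrant. I would proceed in two steps. First, apply the deterministic limit on a short interval $[0,t_0]$ of order $1$ to place the system at time $t_0$ in a state where $\eta Z^K_{t_0}<Y^K_{t_0}<\eta^{-1}Z^K_{t_0}$ and both coordinates are still of order $K$. (For $t_0$ small, the flow has decreased $y$ but not yet pushed $z$ to zero, and a lower bound on the ratio $Z^K/(Y^K+Z^K)$ is preserved.) Second, using the Markov property at $t_0$, dominate $Y^K$ from above by a branching process whose per-capita death rate is augmented by a constant fraction of $\tau$ (this is legitimate on the event where $Z^K/(Y^K+Z^K)$ is bounded below by a fixed constant throughout $[t_0,s\log K]$, an event of probability $\to 1$ for $s$ small enough by Lemma~\ref{lem:sable-popu}(i) applied to the total $Y^K+Z^K$); Lemma~\ref{lem:BP} then yields $\log Y^K_{s\log K}/\log K\le 1-s\rho_1$ for some $\rho_1>0$ depending on $\tau$ and $\eta$. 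Simultaneously, once $Y^K$ is sufficiently smaller than $Z^K$, the sum $Y^K+Z^K$ is dominated by a logistic process whose effective intrinsic rate is at most $r_2+o(1)<0$, so by a further application of Lemma~\ref{lem:BP} (in its subcritical regime),  $Z^K_{s\log K}\le K^{1-s\rho_2}$ for some $\rho_2>0$. Setting $\rho:=\frac12\min(\rho_1,\rho_2)$ gives~\eqref{eq:lemme-extinction}. The very analogous statement for the non-logistic Lemma~\ref{lem:competitionTBDI} is proved by the same two-step comparison, omitting the logistic component of the dominating processes.

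\textbf{Main obstacle.} The hard part is step two of case~(iii): ensuring that the lower bound on $Z^K/(Y^K+Z^K)$ persists uniformly on the full logarithmic window $[t_0,s\log K]$, so that the transfer-driven inflation of the death rate of $Y^K$ remains effective. This requires choosing $s$ small enough that, even though $Z^K$ itself is decaying (because $r_2<0$), it does not decay faster than $Y^K$. A careful tuning of $s$, $t_0$, $\rho_1$, $\rho_2$ against the fixed constants $r_1$, $|r_2|$, $\tau$, $m$ and $\varepsilon$ is needed to close the loop; this is analogous to, but strictly more delicate than, the corresponding argument in Lemma~\ref{lem:competitionTBDI}, where the absence of logistic pressure makes the relative decay of $Y^K/Z^K$ straightforward.
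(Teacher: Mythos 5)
Your handling of (i) and (ii) follows the same route as the paper: convergence of $(Y^K/K,Z^K/K)$ to the solution of the deterministic system \eqref{eq:syst-logistic+transfert}, global stability of the boundary equilibrium (the paper cites the ``invasion implies fixation'' result of \cite[Sec.~3.3.1]{billiardcolletferrieremeleardtran}, which is a cleaner reference than ``standard Lotka--Volterra monotonicity'' — the transfer term makes the planar system non-competitive in the Hirsch sense), and the ending as in \cite[Thm~3(b)]{champagnat06}. This part is essentially correct.

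For (iii) there is a genuine gap, and you yourself flag it in your ``Main obstacle'' paragraph without resolving it. Your two-step plan brings the system to a state at time $t_0$ where $Y^K$ and $Z^K$ are \emph{comparable and still of order $K$}, and then tries to close the argument on $[t_0, s\log K]$ by (a) lower-bounding $Z^K/(Y^K+Z^K)$ via Lemma~\ref{lem:sable-popu}(i) applied to $Y^K+Z^K$, and (b) dominating the sum by a subcritical logistic process ``once $Y^K$ is sufficiently smaller than $Z^K$.'' Step (a) is not legitimate: Lemma~\ref{lem:sable-popu}(i) requires $b>d$ (a supercritical logistic staying near its stable equilibrium $r/C$), whereas the sum $Y^K+Z^K$ has effective intrinsic rate close to $r_2<0$ once $Z$ dominates — it is decaying, not stabilizing — and moreover the sum is not a logistic birth-death process since the two compartments carry different per-capita rates. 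Step (b) is circular: the domination by a subcritical logistic needs $Y^K\ll Z^K$, which is precisely what you are trying to establish, and $t_0$ only delivers comparable sizes (indeed as written you ask for $\eta Z^K_{t_0}<Y^K_{t_0}$, a lower bound on the ratio, which goes in the wrong direction).

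The paper avoids both difficulties by running the deterministic flow long enough to put the system in the set $\mathcal{C}_\eta=\{(y,z)\in B(0,\eta):0<y/z<\eta\}$: since $r_2<0$ forces $(y(t),z(t))\to(0,0)$, and $p(t)=y(t)/z(t)$ solves $\dot p=-Sp$ and hence decays exponentially, for any $\eta>0$ there is a finite $t_\eta$ after which \emph{both} coordinates are of order $\eta K$ \emph{and} the ratio is smaller than $\eta$. Inside $K\mathcal{C}_{2\eta}$ the logistic term is bounded by $4C\eta$ and the transfer fraction $Z/(Y+Z)$ is bounded below by $1/(1+2\eta)$, so one can sandwich $(Y^K,Z^K)$ between \emph{pure} branching processes with constant (slightly $\eta$-perturbed) rates plus negligible immigration, without ever needing to control the sum as a logistic process. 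One then shows the process remains in $K\mathcal{C}_{2\eta}$ for a time $s\log K$, and concludes from Lemma~\ref{lem:BP}. This ``shrink everything to scale $\eta K$ first'' step, rather than keeping the populations macroscopic, is what makes the coupling self-consistent; your proposal omits it.
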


In Lemma~\ref{lem:competition}~(iii), the initial population $Y^K$ is resident and after invasion, the population $Z^K$ becomes dominant but not resident, as can be seen from the exponent $1-s\rho$ in~\eqref{eq:lemme-extinction}.

\medskip

\begin{proof}
  Let us first prove (i). By Condition~\eqref{eq:condition-CIRM}, the proofs of~\cite{billiardcolletferrieremeleardtran} or~\cite[Ch.
  11]{ethierkurtz} can be easily adapted to prove that, when $\frac{1}{K}(Y^K_0,Z^K_0)$ converges in probability to
  $(y_0,z_0)\in(0,\infty)^2$, the process $(Y^K,Z^K)$ converges in probability in $L^\infty_\text{loc}(\RR_+)$ to the solution
  $(y(t),z(t))$ of the dynamical system
  \begin{equation}
    \label{eq:syst-logistic+transfert}
    \begin{cases}
      \dot{y} & = y\left(r_1-C(y+z)\right)-\tau\frac{yz}{y+z} \\
      \dot{z} & = z\left(r_2-C(y+z)\right)+\tau\frac{yz}{y+z}.
    \end{cases}
  \end{equation}
  In our frequency-dependent case with constant competition, invasion implies fixation~\cite[Section
  3.3.1]{billiardcolletferrieremeleardtran}, so that, for all $(y_0,z_0)\in (0,\infty)^2$, $(y(t),z(t))$ converges when
  $t\rightarrow+\infty$ to $(0,r_2/C)$ (since $S>0$). The lemma follows from this result as in~\cite[Thm. 3(b)]{champagnat06}.

  The proof of (ii) follows the same arguments.

  Let us turn to the proof of (iii). First, we remark that, for $y(t)$, $z(t)$ solution to~\eqref{eq:syst-logistic+transfert},
  $p(t)=y(t)/z(t)$ solves $\dot{p}=-Sp$ when $(y,z)$ solves~\eqref{eq:syst-logistic+transfert}. Hence, if
  $y(0)\in\left[\frac{r_1}{C}-\varepsilon,\frac{r_1}{C}+\varepsilon\right]$ and $z(0)\geq m\varepsilon$,
  \[
  p(t)\leq\frac{r_1/C+\varepsilon}{m\varepsilon}e^{-St}.
  \]
  In particular, there exists $t_0$ such that, for all $t\geq t_0$, $y(t)\leq \frac{|r_2|}{2}z(t)$. Therefore, $\dot{z}/z\leq
  r_2+|r_2|/2=r_2/2<0$ for all $t\geq t_0$. Therefore, $(y(t),z(t))$ converges to $(0,0)$ for $t\rightarrow+\infty$. In addition, for
  any $\eta>0$, there exists $t_{\eta}$ large enough such that the solutions at time $t_{\eta}$ of the dynamical system issued from
  any initial condition in $\left[\frac{r_1}{C}-\varepsilon,\frac{r_1}{C}+\varepsilon\right]\times[m\varepsilon,\infty)$,
  belongs to a compact subset of ${\cal C}_{\eta}= \{(y,z)\in B(0,\eta);0<y/z<\eta\}$.

  We deduce that there exists a compact subset of ${\cal C}_{1.5 \eta}$ containing $(Y^K_{t_{\eta}}/K, Z^K_{t_{\eta}}/K)$ with a
  probability close to $1$ for $K$ large enough. In particular, there exists a constant $\kappa>0$ such that
  \begin{equation}
    \label{eq:borne-inf-Y-Z}
    \PP\left(
      Y^K_{t_{\eta}}>\kappa
      K,\  Z^K_{t_{\eta}}>\kappa K
    \right)\xrightarrow[K\rightarrow+\infty]{}1
  \end{equation}
 After time $t_{\eta}$, we can construct a coupling between the stochastic
  population processes as follows: for all $t$ smaller than $T^\text{exit}_{2\eta}$ the first exit time of $\text{Adh}(K {\cal C}_{2\eta})$
  by $(Y^K, Z^K)$,
  \[
  A^0_t\leq Y^K_{t+t_\eta}\leq A^1_t+I^1_t,\quad A^2_t\leq Z^K_{t+t_\eta}\leq A^3_t+I^3_t,
  \]
  where $A^0_0=A^1_0=Y^K_{t_\eta}$, $A^2_0=A^3_0=Z^K_{t_\eta}$, $A^0$ has law $BP(b_1,d_1+\tau+4C\eta)$, $A^1$ has law
  $BP(b_1,d_1+\tau/(1+2\eta))$, $A^2$ has law $BP(b_2,d_2+4C\eta)$, $A^3$ has law $BP(b_2+2\tau\eta/(1+2\eta),d_2)$, and the random
  variables $I^1_t$ and $I^3_t$ count, in $Y^K$ and $Z^K$ respectively, the number of individuals alive at time $t+t_\eta$ born from
  immigrant individuals which arrived in the population after time $t_\eta$. Note that all theses processes are not necessarily
  independent.

  Using domination of immigrant population by Yule processes as in the proof of Lemma~\ref{lem:exposant-bouge-pas-trop}, we can prove
  as in~\eqref{eq:domination-Yule} (with $\beta=0$) that, for $s>0$ small enough,
  \begin{equation}
    \label{eq:controle-immigrants}
  \lim_{K\rightarrow+\infty}\PP\left(\forall t\leq s\log K,\ I^1_t+I^3_t\leq K^{1-\alpha/2}\right)=1.
  \end{equation}
  Let us first prove that $T^\text{exit}_{2\eta}>s\log K$ with probability converging to 1. We first need to check that
  $(Y^K_t,Z^K_t)\in B(0,2\eta)$ for all $t\leq s\log K$ with probability converging to 1. Because of~\eqref{eq:controle-immigrants},
  it is enough to prove that
  \[
  \P_{\lfloor 1.5 \eta K\rfloor}(T_{ext}(A^1) < T_{2 \eta K}(A^1))\xrightarrow[K\rightarrow+\infty]{} 1
  \]
  and similarly for $A^3$, where $T_\text{ext}(A^i)$ is the extinction time of $A^i$ and $T_{M}(A^i)$ is the first time $t$
  such that $A^i_t\geq M$. It is classical to prove that, for $A\sim BP(b,d)$ with $b<d$,
  \[
  \PP_k(T_\text{ext}(A)< T_n(A))=\frac{\left(\frac{d}{b}\right)^n-\left(\frac{d}{b}\right)^k}{\left(\frac{d}{b}\right)^n-1},
  \]
  so the result follows.

  The second steps consists in proving that $Y^K_{t}\leq 2\eta Z^K_t$ for all $t\in[t_\eta,t_\eta+s\log K]$. Recall that $\kappa
  K\leq Y^K_{t_\eta}\leq 1.5\eta Z^K_{t_\eta}$ with high probability. Using~\eqref{eq:borne-inf-Y-Z}, we can apply
  Lemma~\ref{lem:BP} to obtain that, for $s>0$ small enough and all $\varepsilon'>0$, for all $t\leq s\log K$,
  \begin{equation}
    \label{eq:dominations}
    \begin{aligned}
      A^1_{t} & \leq (1+\varepsilon') A_{0}^1 e^{(r_{1}-\tau/(1+2\eta))t} \\
      A^2_{t} & \geq (1-\varepsilon') A_{0}^2 e^{(r_2-4C\eta)t}.
    \end{aligned}
  \end{equation}
  We choose $\eta>0$ and $\varepsilon'>0$ small enough so that $r_1-\tau/(1+2\eta)<r_2-4C\eta$ and
  $1.5(1+\varepsilon')<2(1-\varepsilon')$ . Hence, with probability converging to 1, for all $t\leq s\log K$,
  \[
  Y^K_{t+t_\eta}\leq A^1_t\leq 2(1-\varepsilon')\eta A^2_0 e^{(r_{1}-\tau/(1+2\eta))t}\leq 2 \eta Z^K_{t+t_\eta}.
  \]
  So we have proved that $T^\text{exit}_{2\eta}>s\log K$ with probability converging to 1.

  In addition, it follows from~\eqref{eq:dominations} that
  \[
  Y^K_{t+t_\eta}\leq 2\eta \exp\left((r_{1}-r_2-\tau/(1+2\eta)+4C\eta)t\right) Z^K_{t+t_\eta}.
  \]
  Introducing $0<\rho< -(r_{1}-r_2-\tau/(1+2\eta)+4C\eta)$ and choosing $t=s\log K-t_\eta$ and $\eta<\varepsilon/2$, we
  deduce~\eqref{eq:lemme-extinction}.
\end{proof}

\section{Algorithmic construction of the slopes of $\beta(t)$}
\label{sec:algo}

We defined in Theorem \ref{thm:transfer-main-new} the times $s_{k}$ of change of resident or dominant populations. For algorithmic purpose , it is useful to  characterize all the times $t_{k}$ where  the functions $\beta_{\ell}(t)$ change their slopes.
 The successive slopes are given in the next result which  stems from Corollary \ref{thm:transfer-main}. The next formula
 \eqref{eq:affreuse-2} is explained after the statement of the theorem. We define
\begin{equation}
    \label{eq:probleme-limite-annexe}
      \Sigma^0_\ell(t) :=  \Sigma_\ell(t)\ind_{\beta_\ell(t)>0\text{ or }(\beta_\ell(t)=0\text{ and }\beta_{\ell-1}(t)=\alpha)}
\end{equation}

\begin{thm}
  \label{cor:transfer-main}
  Under the same assumptions as in Theorem~\ref{thm:transfer-main-new}, the limit $\beta(t)$ is continuous and piecewise affine and can
  be constructed recursively as follows: assume that, for some $k\geq 0$, we have constructed times $0=t_0<t_1<\ldots< t_k<T_0$ and
  integers $\ell'_0,\ldots,\ell'_{k-1}$ such that $(\beta(t),t\in[t_i,t_{i+1}])$ is affine and $\ell^*(t)=\ell'_i$ for all
  $t\in[t_i,t_{i+1})$ for all $0\leq i\leq k-1$. {Recall that, since $t_k<T_0$, we have} $\text{Card}(\text{Argmax}_{0\leq \ell\leq L}
  \beta_\ell(t_i))\in\{1,2\}$ for all $0\leq i\leq k$.

  Then
  \begin{align}
    & t_{k+1} =t_k+
    \left(\inf\left\{\frac{\beta_{\ell'_k}(t_k)-\beta_\ell(t_k)}{{\Sigma}^0_\ell(t_k+)-\widetilde{S}_{t_{k}+}(\ell'_k\delta;\ell'_k\delta)};\ell\neq\ell'_k
        \text{ s.t.\
        }{\Sigma}^0_\ell(t_k+)>\widetilde{S}_{t_{k}+}(\ell'_k\delta;\ell'_k\delta)\right\}\right. \notag \\
    & \ \wedge\inf \Big\{\frac{\beta_\ell(t_k)}{-{\Sigma}^0_\ell(t_k+)};\ell\text{ s.t.\ }\beta_\ell(t_k)>0 \text{ and
      }{\Sigma}^0_\ell(t_k+)<0 \Big\} \notag\\
    & \ \wedge \inf\Big\{\frac{\beta_\ell(t_k)-\beta_{\ell-1}(t_k)+\alpha}{{\Sigma}^0_{\ell-1}(t_k+)- \widetilde{S}_{t_{k}+}(\ell\delta,\ell'_k\delta)\ind_{\beta_\ell(t_k)>0}};
        \ell\neq\ell'_k\text{ s.t.\ }\beta_\ell(t_k)>\beta_{\ell-1}(t_k)-\alpha
       \notag \\
    &\hskip 5.5cm \text{ and }{\Sigma}^0_{\ell-1}(t_k+)- \widetilde{S}_{t_{k}+}(\ell\delta,\ell'_k\delta) \ind_{\beta_\ell(t_k)>0}>0
      \Big\}\notag \\
      & \ \wedge \left.\inf \Big\{\frac{1-\beta_{\ell'_k}(t_k)}{{\Sigma}^0_{\ell'_k}(t_k+)} ; \ {\Sigma}^0_{\ell'_k}(t_k+)>0 \hbox{ and }  \beta_{\ell'_k}(t_k)<1\Big\}  \right) \label{eq:affreuse-2}
  \end{align}
  and, for all $t\in[0,t_{k+1}-t_k]$,
  \begin{equation}
    \label{eq:inductive-construction1}
    \beta_\ell(t+t_k)=\beta_\ell(t_k)+{\Sigma}^0_\ell(t_k+)\, t,\quad\forall 0\leq\ell\leq L.
  \end{equation}
\end{thm}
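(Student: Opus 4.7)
The plan is to derive Theorem~\ref{cor:transfer-main} as a direct corollary of Corollary~\ref{thm:transfer-main} by tracking precisely when the vector field $\Sigma^0(t)$ changes value. Between two consecutive slope changes, $\beta(t)$ is affine with slope $\Sigma^0(t_k+)$, so the only task is to identify $t_{k+1}$ as the first time after $t_k$ at which some coordinate of $\Sigma^0$ jumps.

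First I would enumerate the possible causes of a jump of $\Sigma^0_\ell$ at an interior point of $[s_{k-1},s_k)$. Looking at~\eqref{eq:probleme-limite-annexe} and \eqref{rec-sigma}, a change of $\Sigma^0_\ell$ can occur only if: (i) the index $\ell^*(t)$ changes, which by definition of $s_k$ happens at a time where a non-dominant exponent $\beta_\ell$ reaches $\beta_{\ell'_k}$; (ii) some $\beta_\ell$ with $\beta_\ell(t_k)>0$ hits zero (and the corresponding indicator switches off), in which case its dynamics stops if $\beta_{\ell-1}(t)<\alpha$; (iii) an index $\ell$ with $\beta_\ell(t_k)>\beta_{\ell-1}(t_k)-\alpha$ reaches the value $\beta_{\ell-1}(t)-\alpha$, causing $\Sigma_\ell$ to switch from $\widetilde{S}_t(\ell\delta;\ell^*(t)\delta)$ to the larger value $\widetilde{S}_t(\ell\delta;\ell^*(t)\delta)\vee\Sigma_{\ell-1}(t)$; (iv) the dominant exponent $\beta_{\ell'_k}(t)$ reaches 1 in the middle of a phase, triggering the switch from $\widehat{S}$ to $S$ in $\widetilde{S}_{t,k}$ (see \eqref{eq:def-tilde-S} and the remark following it).

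Next I would compute, for each of these events, the first time at which it can occur after $t_k$ assuming the slopes stay fixed. For case (i), the exponent $\beta_\ell$ catches up with $\beta_{\ell'_k}$ at the linearly computed time $t_k+(\beta_{\ell'_k}(t_k)-\beta_\ell(t_k))/(\Sigma^0_\ell(t_k+)-\widetilde S_{t_k+}(\ell'_k\delta;\ell'_k\delta))$, which is well-defined and positive exactly when the denominator is positive. For case (ii), the exponent $\beta_\ell$ hits zero at time $t_k+\beta_\ell(t_k)/(-\Sigma^0_\ell(t_k+))$, valid when $\Sigma^0_\ell(t_k+)<0$. For case (iii), solving $\beta_\ell(t)=\beta_{\ell-1}(t)-\alpha$ gives the third term, with the indicator $\mathbbm{1}_{\beta_\ell(t_k)>0}$ handling the edge case where $\beta_\ell(t_k)=0$ so that $\Sigma^0_\ell(t_k+)=0$. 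For case (iv), $\beta_{\ell'_k}(t)$ reaches $1$ at the fourth term. Taking the infimum over all these candidates yields~\eqref{eq:affreuse-2}, and~\eqref{eq:inductive-construction1} follows because $\Sigma^0$ is constant on $[t_k,t_{k+1})$ by construction.

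The main obstacle will be justifying that no other phenomenon can change a slope on $(t_k,t_{k+1})$. The delicate case is the recursive definition~\eqref{rec-sigma}: a jump of $\Sigma_{\ell-1}$ propagates to $\Sigma_\ell$ only when $\beta_\ell(t)=\beta_{\ell-1}(t)-\alpha$, which is precisely monitored by case (iii), so the recursion is internally consistent. One also has to check that the three infima in (i)--(iii) and the fourth in (iv) together exhaust the possibilities: a change of the argmax in~\eqref{eq:rec-ell*} at some $s_k$ falls under (i), termination via extinction (case (ii)(b) of Theorem~\ref{thm:transfer-main-new}) corresponds to $\beta_{\ell'_k}$ itself hitting zero which is contained in (ii), and the switch from dominant to resident regime inside a phase is exactly (iv). Finally, the hypothesis $t_k<T_0$ excludes the pathological simultaneous-argmax situation of case (ii)(a), guaranteeing that the infimum is attained by at most two traits, so the induction proceeds cleanly to step $k+1$.
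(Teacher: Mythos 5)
Your proposal is correct and follows essentially the same approach the paper takes: the paper does not give a formal proof of Theorem~\ref{cor:transfer-main} but derives it from Corollary~\ref{thm:transfer-main} via the same four-case enumeration of slope-change events that you describe, matching the paper's explanatory list item by item (change of dominant/resident trait, a coordinate hitting zero, immigration overtaking intrinsic growth when $\beta_\ell=\beta_{\ell-1}-\alpha$, and the dominant exponent reaching $1$). Your treatment of the indicator $\ind_{\beta_\ell(t_k)>0}$ in the third infimum and the observation that $T_0$ rules out triple-argmax degeneracies likewise agree with the paper's reasoning.
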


It follows from Corollary \ref{thm:transfer-main} that
$$\widetilde{S}_{t_{k}+} = \mathbbm{1}_{\{\beta_{\ell'_{k}}(t_{k})=1 ; \, \ell'_{k} \delta<3\}}\,S(y;x)+\mathbbm{1}_{\{\beta_{\ell'_{k}}(t_{k})<1 \hbox{ or } \ell'_{k} \delta > 3\}}\,\widehat{S}(y;x).
$$
and ${\Sigma}^0_\ell(t_k+)$ is defined from $\Sigma_\ell(t_k+)$ as in~\eqref{eq:probleme-limite-annexe}, and $\Sigma_\ell(t_k+)$ is defined
using \eqref{rec-sigma} replacing $\widetilde S_{t_{k}}$ by $\widetilde{S}_{t_{k}+}$.

\bigskip

Note that the slope in~\eqref{eq:inductive-construction1} only depends on the vector $\beta(t_k)$.
Hence these formula  make possible exact numerical simulation of $\beta(t)$.
In the
definition~\eqref{eq:affreuse-2} of $t_{k+1}$,
\begin{enumerate}
\item the first infimum corresponds to the first time where another component of $\beta(t)$
intersects $\beta_{\ell'_{k}}(t)$ (change of resident or dominant population),
\item the second one to the first time where a component  of $\beta(t)$ hits 0 (extinction of a subpopulation),
\item the third one
to the first time where immigration due to mutations becomes dominant in the dynamics of one coordinate of $\beta(t)$,
\item the fourth one  to the first time where $\beta_{\ell'_{k}}(t)$ hits $1$ when $\beta_{\ell'_{k}}(t_{k})<1$ (transition from dominant to resident for trait $\ell'_{k} \delta$).
\end{enumerate}
The smallest of the four infima in~\eqref{eq:affreuse-2} gives the nature of the next event.

The third infimum involves only traits $\ell \delta$ whose initial dynamics (after time $t_k$) is not driven by mutations, i.e.\ such
that $\beta_{\ell-1}(t_k)<\beta_\ell(t_k)+\alpha$. In this case,
\[
\Sigma^0_\ell(t_k+)=\widetilde S_{t_{k}+}(\ell\delta,\ell^*_k\delta) \ind_{\beta_\ell(t_k)>0}.
\]
When $\Sigma^0_{\ell-1}(t_k+)>\widetilde S_{t_{k}+}(\ell\delta,\ell^*_k\delta)\ind_{\beta_\ell(t_k)>0}$, the time where the curve
$\beta_\ell(t_k)+t\,\widetilde S_{t_{k}+}(\ell\delta,\ell^*_k\delta) \ind_{\beta_\ell(t_k)>0}$ crosses $\beta_{\ell-1}(t_k)+t\,
\Sigma^0_{\ell-1}(t_k+)-\alpha$ is then given by
$\frac{\beta_\ell(t_k)-\beta_{\ell-1}(t_k)+\alpha}{\Sigma^0_{\ell-1}(t_k+)-\widetilde S_{t_{k}+}(\ell\delta,\ell^*_k\delta)\ind_{\beta_\ell(t_k)>0}}$.

\section*{Funding}
N.C., S.M. and V.C.T. have been supported by the Chair ``Mod\'elisation Math\'ematique et
Biodiversit\'e'' of Veolia Environnement-Ecole Polytechnique-Museum National d'Histoire Naturelle-Fondation X. V.C.T. also acknowledges support from ANR CADENCE (ANR-16-CE32-0007).

{\footnotesize
\providecommand{\noopsort}[1]{}\providecommand{\noopsort}[1]{}\providecommand{\noopsort}[1]{}\providecommand{\noopsort}[1]{}\providecommand{\noopsort}[1]{}\providecommand{\noopsort}[1]{}\providecommand{\noopsort}[1]{}\providecommand{\noopsort}[1]{}\providecommand{\noopsort}[1]{}\providecommand{\noopsort}[1]{}\providecommand{\noopsort}[1]{}\providecommand{\noopsort}[1]{}

}

\end{document}